\documentclass[10pt,twoside,english,reqno,a4paper]{amsart}
\usepackage{amsmath,amssymb,latexsym,esint,mathrsfs,mathtools}
\usepackage{amsthm}
\usepackage{verbatim,wasysym}

\usepackage[english]{babel} %
\usepackage[T1]{fontenc}
\usepackage{setspace} %
\usepackage{appendix}
\usepackage{etoolbox}
\usepackage{microtype}
\usepackage{color,enumitem,graphicx}

\usepackage{tikz}
\usepackage{pgfplots}
\pgfplotsset{compat=1.18}
\usetikzlibrary{shapes.geometric, calc}

\definecolor{red}{rgb}{0.9,0,0}
\usepackage{palatino}
\definecolor{azul}{rgb}{0,0,1}

\usepackage{float}

\usepackage{geometry}
\usepackage{booktabs}

\def\XXint#1#2#3{{\setbox0=\hbox{$#1{#2#3}{\int}$}
 \vcenter{\hbox{$#2#3$}}\kern-.5\wd0}}

\def\re{\mathbb{R}}

\usepackage[%
colorlinks=true,
citecolor=red,
linkcolor=blue,
urlcolor=green!45!black,
linktocpage,
pdfpagelabels,
bookmarksnumbered,
bookmarksopen]
{hyperref}

\usepackage{csquotes}

\usepackage[hang]{footmisc}
\usepackage{xpatch}
\usepackage{footnotebackref} %
\usepackage[hyperpageref]{backref}

\makeatletter
\xpretocmd{\@adminfootnotes}{\let\@makefntext\BHFN@OldMakefntext}{}{}
\renewcommand\@makefntext[1]{%
 \@ifundefined{@makefnmark}
 {}
 {%
 \renewcommand\@makefnmark{%
 \mbox{%
 \textsuperscript{%
 \normalfont
 \hyperref[\BackrefFootnoteTag]{\@thefnmark}%
 }%
 }\,%
 }%
 \BHFN@OldMakefntext{#1}%
 }%
}
\makeatother

\patchcmd{\section}{\scshape}{\normalfont \Large \bfseries}{}{}
\patchcmd{\subsection}{\normalfont}{\normalfont \large}{}{}
\patchcmd{\subsection}{-.5em}{.5\linespacing}{}{}
\patchcmd{\subsubsection}{\normalfont}{\noindent \normalfont \large \bfseries \textit}{}{}
\patchcmd{\subsubsection}{-.5em}{.5\linespacing}{}{}

\DeclareRobustCommand{\SkipTocEntry}[5]{}
\protected\def\ignorethis#1\endignorethis{}
\let\endignorethis\relax

\let\oldtocsection=\tocsection
\let\oldtocsubsection=\tocsubsection
\let\oldtocsubsubsection=\tocsubsubsection
\renewcommand{\tocsection}[2]{\hspace{0em}\oldtocsection{#1}{#2}}
\renewcommand{\tocsubsection}[2]{\hspace{1em}\oldtocsubsection{#1}{#2}}
\renewcommand{\tocsubsubsection}[2]{\hspace{2em}\oldtocsubsubsection{#1}{#2}}
\newtheorem{lemma}{Lemma}[section]
\newtheorem{corollary}[lemma]{Corollary}
\newtheorem{proposition}[lemma]{Proposition}
\newtheorem{theorem}[lemma]{Theorem}
\newtheorem{oproblem}{Open Problem}

\theoremstyle{definition}

\newtheorem{remark}[lemma]{Remark}
\newtheorem{example}[lemma]{Example}
\newtheorem{definition}[lemma]{Definition}

\numberwithin{equation}{section}

\usepackage{orcidlink}

\newcommand{\R}{\mathbb{R}}

\newcommand{\mc}[1]{\mathcal{#1}}

\DeclareMathOperator{\dist}{dist}
\DeclareMathOperator*{\argmin}{argmin}
\DeclareMathOperator{\sgn}{sgn}

\def\eps{\varepsilon}

\def\norm#1{\|#1\|}

\newcommand{\wto}{\rightharpoonup}

\title[Optimal power concavity of the variational mean curvature]{{Optimal power concavity of the
variational mean curvature %
\\ via singular $1$-Laplace problems}}

\author[M. Gallo]{Marco Gallo \orcidlink{0000-0002-3141-9598}}
\author[F. Petitta]{Francesco Petitta \orcidlink{0000-0001-9265-7912}}
\author[M. Squassina]{Marco Squassina \orcidlink{0000-0003-0858-4648}}

\address[M. Gallo, M. Squassina]{\newline\indent Dipartimento di Matematica e Fisica
	\newline\indent
	Università Cattolica del Sacro Cuore
	\newline\indent
	Italy, Brescia, BS, Via della Garzetta 48, 25133}
\email{\href{mailto:marco.gallo1@unicatt.it}{marco.gallo1@unicatt.it}}
\email{\href{mailto:marco.squassina@unicatt.it}{marco.squassina@unicatt.it}}

\address[F. Petitta]{
\newline\indent Dipartimento di Scienze di Base e Applicate per l'Ingegneria
\newline\indent
Sapienza Università di Roma
\newline\indent
Italy, Roma, RM, Via Antonio Scarpa 16, 00161}
\email{\href{mailto:francesco.petitta@uniroma1.it}{francesco.petitta@uniroma1.it}}

\thanks{\emph{Fundings.}
All the authors are members of INdAM-GNAMPA.
M. Gallo is supported by INdAM-GNAMPA Project
``Aspetti geometrici e qualitativi di equazioni ellitiche e paraboliche", codice CUP \#E53C25002010001\#.
\bigskip
}

\subjclass[2020]{{35J75, 35J92, 49Q20, 53A10, 52A20, 35B65, 35J60, 26B25}}
\keywords{{$1$-Laplacian; variational mean curvature; Cheeger set; calibrable set; power concavity;
harmonic convexity; singular elliptic equations; $p$-Laplacian; convex domains}}

\begin{document}

\begin{abstract}
{Let $\Omega \subset \R^N$ be a bounded convex domain and let $H_{\Omega}\leq 0$ denote its
\emph{variational mean curvature}, in the sense of Barozzi, Gonzalez and Tamanini \cite{BGT87}. We prove that
$-H_{\Omega}$ is \emph{harmonic convex}, that is $1/H_{\Omega}$ is a convex function in $\Omega$, and that the
exponent $-1$ is optimal in the class of convex domains. This strictly improves the concavity of $H_{\Omega}$
obtained by Alter and Caselles \cite{AlCa09}{, who also proved, by geometric methods, a closely related property for the classical mean curvature under Minkowski interpolation of convex bodies.
Our approach is analytic:} the result is deduced from a concavity property of the solution of the singular $1$-Laplace Dirichlet problem
$$\begin{cases}
-\Delta_1 u = {u^{-\gamma}} & \quad \hbox{in $\Omega$}, \\
u = 0 &\quad \hbox{on $\partial \Omega$},
\end{cases}
$$
for $\gamma \in (0,1]$, whose unique {positive} solution is $u=(-H_{\Omega})^{-1/\gamma}$. We show that $u^{\gamma}$ is
concave, and we obtain this by proving that the solutions $u_p$ of the approximating problems
$-\Delta_p u_p = u_p^{-\gamma}$ are $\frac{p-1+\gamma}{p}$-concave on \emph{every} bounded convex domain, {without any boundary regularity,} and
then letting $p\to 1^{+}$. {Explicit examples show that no H\"older continuity of $H_{\Omega}$ up to $\partial\Omega$ holds {uniformly} in the class of convex domains with $C^{1,1}$ boundary.}}
\end{abstract}

\maketitle

\vspace{-1em}

\begin{center}
\begin{minipage}{10.5cm}
		\small
		\tableofcontents
\end{minipage}
\end{center}

\vspace{1em}

\section{Introduction}
\label{sec_intro}

Let $\Omega \subset \R^N$, $N \geq 2$, be a bounded convex domain, and for $\lambda>0$ let $\Omega_{\lambda}$
be a solution of the minimization problem
$\min_{E \subset \Omega} \left( P(E)+\lambda |\Omega \setminus E|\right)$ or, equivalently, of
\begin{equation}\label{eq_min_Omegalamda}
\min_{E \subset \Omega} \left( P(E)-\lambda |E|\right),
\end{equation}
$P(E)$ denoting the perimeter of a measurable set $E$ in $\R^N$.
{Denoting by $h(\Omega)$ the Cheeger constant of $\Omega$ (see Section \ref{sec_cheeg}), {for $\lambda<h(\Omega)$, the unique minimizer of \eqref{eq_min_Omegalamda} is the trivial one $\Omega_{\lambda}:=\emptyset$,}
while there exists a unique and convex minimizer $\Omega_{\lambda}$ for
$\lambda>h(\Omega)$ \cite[Proposition 2.4]{AlCa09}; when $\lambda=h(\Omega)$ there exists a unique non-trivial set $K$ of $\Omega$ that minimizes \eqref{eq_min_Omegalamda} (see \cite[Theorem 1]{AlCa09}) and it is called the Cheeger set of $\Omega$, so we set $\Omega_{h(\Omega)}:=K$.
Notice that uniqueness holds once the boundary of $\Omega_{\lambda}$ (which has zero measure) is considered included or not; here we use the convention of $\Omega_{\lambda}$ as \emph{open} set (see also Remark \ref{rem_openclosed}).

 In this way $\Omega_{\lambda}$ is unambiguously defined for every
$\lambda>0$, it is convex, and $\Omega_{\lambda} \subseteq \Omega_{\mu}$ whenever $\lambda<\mu$
\cite[Lemma 4 (i)]{ACC05}, \cite[(2.15)]{GoMa94}{; we also recall that, for convex $\Omega$, one has $\Omega_\lambda=\Omega$ for $\lambda$ large
if and only if $\partial\Omega\in C^{1,1}$, and in this case exactly for
$\lambda\geq\max\{P(\Omega)/|\Omega|,(N-1)\norm{\kappa}_{L^\infty(\partial\Omega)}\}$
(see \cite[Theorem 2]{AlCa09} and Remark \ref{rem_ifflinf}).
This is illustrated in Figure \ref{fig_ellipse_square}: for the ellipse $\Omega_\lambda$
fills $\Omega$ as soon as $\lambda$ reaches the maximal curvature of $\partial\Omega$, while
for the square $\Omega_\lambda$ increases to $\Omega$ but never coincides with it.}}

\begin{figure}[htbp]
\centering
\begin{tikzpicture}

\begin{scope}[shift={(0,0)}]

\draw[thick,dashed,color=gray]
 (0,0) ellipse (3cm and 1cm);

\draw[thick,fill=green!20,draw=green!60!black]
 (2.1213,-0.7071)
 arc (-71.565:71.565:0.7453)
 arc (45:135:3 and 1)
 arc (108.435:251.565:0.7453)
 arc (225:315:3 and 1)
 -- cycle;

\node[gray,font=\small]
 at (-2.65,0.80) {$\Omega$};

\node[green!60!black,font=\small]
 at (0,0) {$\Omega_\lambda$};

\end{scope}

\begin{scope}[shift={(5.3,0)}]

\draw[thick,dashed,color=gray]
 (-1.5,-1.5) rectangle (1.5,1.5);

\draw[thick,fill=green!20,draw=green!60!black]
 (-1,1.5)
 -- (1,1.5)
 arc (90:0:0.5)
 -- (1.5,-1)
 arc (360:270:0.5)
 -- (-1,-1.5)
 arc (270:180:0.5)
 -- (-1.5,1)
 arc (180:90:0.5)
 -- cycle;

\node[gray,font=\small]
 at (-1.27,1.73) {$\Omega$};

\node[green!60!black,font=\small]
 at (0,0) {$\Omega_\lambda$};

\end{scope}

\end{tikzpicture}

\caption{{An elongated ellipse (left, $\partial\Omega\in C^\infty$, not calibrable) and a
square (right, $\partial\Omega\notin C^{1,1}$), with the corresponding $\Omega_\lambda$ for
some $\lambda>h(\Omega)$: the free boundary $\partial\Omega_\lambda\cap\Omega$ consists of
circular arcs of radius $1/\lambda$. For the ellipse $\Omega_\lambda=\Omega$ for every
$\lambda\geq a/b^2$, the maximal curvature of $\partial\Omega$; for the square
$\Omega_\lambda\neq\Omega$ for every $\lambda$, the corners being always rounded off at
radius $1/\lambda$. See Example \ref{ex_examp_optimal} for the related computations.}}
\label{fig_ellipse_square}
\end{figure}

The \emph{canonical variational mean curvature} of $\Omega$ (see \cite{BGT87, ACC05} and
\cite{GMT83, GoMa94, Bar94, BaMa13})
is then defined as the function $H_{\Omega} :\mathbb{R}^{N}\to [-\infty, 0]$ given by
\begin{equation}\label{eq_def_Homega}
H_{\Omega}(x):= \begin{cases}
- \inf\{ \lambda>0 \mid x \in \Omega_{\lambda}\} & \quad \hbox{if $x \in \Omega$,} \\
0 & \quad \hbox{if $x \in \mathbb{R}^{N}\backslash \Omega $.}
\end{cases}
\end{equation}
{Throughout the paper we adopt the sign convention of \cite[(2.1)]{GoMa94}: a function $H\in L^1(\R^N)$ is
called \emph{a} variational mean curvature of a set $E$ of finite perimeter if $E$ minimizes the
\emph{Massari functional}
\begin{equation}\label{eq_massari_funct}
\mc{F}_H(X):= P(X) + \int_X H(x) \, dx
\end{equation}
among all sets $X\subset \R^N$ of finite perimeter, and we write $H\in{\mathscr{V}}^1(E)$; with this convention
$H_{\Omega}\leq 0$ inside $\Omega$. Part of the literature (\cite{Mas75}, \cite[\S 2.4]{ScSc23}) uses instead
$P(X)-\int_X H$, so that the optimal curvature is non-negative: all the comparisons below with
\cite{ACC05,AlCa09}, where $\kappa \geq 0$ denotes the classical mean curvature, have to be read accordingly.}

{The function $H_{\Omega}$ defined in \eqref{eq_def_Homega} belongs to ${\mathscr{V}}^1(\Omega)$.
}
Moreover \cite[Theorems 2.2, 2.3]{GoMa94}, \cite[Theorem 2.7]{ScSc23} imply
\begin{equation}\label{eq_optimality_H}
{\norm{H_{\Omega}}_{L^1(\Omega)}= P(\Omega) \leq \norm{H}_{L^1(\Omega)} \quad \hbox{for every }
H \in {\mathscr{V}}^1(\Omega),}
\end{equation}
{and, if ${\mathscr{V}}^p(\Omega):=\{H \in {\mathscr{V}}^1(\Omega) \mid H \in L^p(\Omega)\}\neq \emptyset$ for
some $p>1$ (see Remark \ref{rem_ifflinf}), then $H_{\Omega}$ is the \emph{unique} minimizer of the $L^p(\Omega)$-norm in
${\mathscr{V}}^p(\Omega)$. On the contrary, the minimizer of the $L^1$-norm is in general not unique, see
\cite[Examples 2.1, 2.2]{GoMa94}.}

The minimality expressed by \eqref{eq_optimality_H} is a distinctive feature of $H_{\Omega}$.

Moreover,  the function $H_{\Omega}$ is constant, equal
to $-h(\Omega)$, on the Cheeger set of $\Omega$, see \eqref{eq_H_const_on_Cheeger} below and
\cite[Introduction]{MAOP25}.

{We stress that $H_{\Omega}$ is not the classical mean curvature: by
\cite[Remark 2.1]{GoMa94}, if $H \in {\mathscr{V}}^1(E)$ is \emph{continuous} at a point $x \in \partial E$ near
which $\partial E$ is smooth, then $H(x)=-(N-1)\kappa(x)$, $\kappa$ being the classical mean curvature of
$\partial E$ at $x$; however $H_{\Omega}$ is in general discontinuous across $\partial \Omega$, and this
identity fails for it. Already for $\Omega=B_r$ one has $H_{B_r}\equiv -N/r$ in $B_r$, while
$-(N-1)\kappa \equiv -(N-1)/r$ (see Example \ref{ex_examp_optimal}); this is {consistent} with
\eqref{eq_optimality_H}, since $\norm{H_{B_r}}_{L^1(B_r)}=\frac{N}{r}|B_r| =P(B_r)$, whereas a naive constant extension of the classical
curvature would give the strictly smaller value $\frac{N-1}{N}P(B_r)$.

See anyway Lemma \ref{lem_explicit_Homega} for a relation between $H_{\Omega}$ and $\kappa$ in a particular case.
Our results below are therefore statements about
$H_{\Omega}$, and not about the classical mean curvature of $\partial \Omega$.}

{Let us highlight that in this paper we will be interested in $(H_{\Omega})_{|\Omega}$, thus the value given in $\R^N \setminus \Omega$ in \eqref{eq_def_Homega} will not have a strong role; indeed, other choices can be made in the definition of $(H_{\Omega})_{|\R^N \setminus \Omega}$, see e.g. \cite{GoMa94, Bar94}.}
{Besides possible discontinuities across $\partial \Omega$, let us mention that} {in \cite[Remark 2.7]{GoMa94} it is stated, with no proof and referring to a generalized maximum
principle, that $H_{\Omega}$ is continuous in $\R^N\setminus \partial \Omega$.}

Finally, the following representation holds \cite[(2.10)]{GoMa94}:

\begin{equation}\label{eq_represent_Omegalambda}
\begin{aligned}
\Omega_{\lambda} &= \operatorname{int}\big\{x \in \Omega \mid H_{\Omega}\geq -\lambda\big\},
\\
& = \big\{x \in \Omega \mid H_{\Omega} > -\lambda\big\} \quad \hbox{if $\lambda>h(\Omega)$};
\end{aligned}
\end{equation}
{notice moreover that
$\{x \in \Omega \mid H_{\Omega}>-\lambda\} =\bigcup_{\mu<\lambda}\Omega_\mu$ for each $\lambda>0$.}
 By convexity of $\Omega_{\lambda}$, \eqref{eq_represent_Omegalambda} implies that $H_{\Omega}$ is \emph{quasi-concave} {(see Definition \ref{def_theta_conc})}.
Actually, in
\cite[Proof of Proposition 2.2 and Remark 2.5]{AlCa09} it is shown that $H_{\Omega}$ is \emph{concave} {(see also Remark \ref{rem_AlCa_related} below)}, which in turn implies
$H_{\Omega} \in Lip_{loc}(\Omega)$.

\smallskip

{Aim of this paper is to improve the above concavity property up to the optimal one, showing that
$-H_{\Omega}$ is \emph{harmonic convex}. To this aim, we exploit the quite intriguing connection between
$H_{\Omega}$ and the solution of a singular Dirichlet problem involving the $1$-Laplacian.}

Consider indeed
\begin{equation}\label{eq_gen_1laplace_a1}
\begin{cases}
-\Delta_1 u = {u^{-\gamma}} & \quad \hbox{in $\Omega$}, \\
u = 0 &\quad \hbox{on $\partial \Omega$},
\end{cases}
\end{equation}
where $\Delta_1 u := \operatorname{div}\big(Du/|Du|\big)$ and {$\gamma>0$}.
{Curiously,} the solution of \eqref{eq_gen_1laplace_a1} {(which is unique by \cite[Theorem 3.5]{MAOP25})} turns out to be {explicit} and related to the notion of
variational mean curvature: indeed, if $\Omega$ is convex, then (see \cite{GOP22,DGOP19,MAOP25} and Section \ref{sec_singular} below)
\begin{equation}\label{eq_sol_eq_Hom}
u(x) = (-H_{\Omega}(x))^{-\frac{1}{\gamma}}.
\end{equation}

{Let us explain why problem \eqref{eq_gen_1laplace_a1} {arises as}
relevant model here. Concavity properties are classical, and more studied, for \emph{sub-homogeneous} problems $-\Delta_pu=u^q$, $q<p-1$ \cite{Mak71, BrLi76, Kor83, Ken85, Sak87, CaSp82, Kaw85, IST20}. For $p=1$ the homogeneity threshold is $q=p-1=0$: sub-homogeneous problems $-\Delta_1u=u^{q}$, $q<0$, are therefore necessarily \emph{singular}, while the borderline case $q=0$, namely the torsion problem $-\Delta_1u=1$, is homogeneous and quite degenerate (see Section \ref{sec_torsion}). {Moreover},
for every $p>1$ the singular problems $-\Delta_pu=u^{-\gamma}$, which we shall use to approximate \eqref{eq_gen_1laplace_a1}, are sub-homogeneous {as well}, and power concavity results are available for them \cite{GaSq25}.}

\medskip

The main result of this paper is the following regularity/concavity property for solutions of
\eqref{eq_gen_1laplace_a1}. {We focus first on $\gamma \in (0,1]$.}

\begin{theorem}
\label{thm_main_ucgamma}
Let $\Omega \subset \R^N$, $N\geq 2$, be open, bounded and convex, and let $\gamma \in (0, 1]$. Then the
solution $u>0$ of \eqref{eq_gen_1laplace_a1} satisfies
\begin{equation}\label{eq_double_bound_u}
\left(\frac{\dist(x,\partial \Omega)}{N}\right)^{\frac{1}{\gamma}} \leq u(x) \leq
\left(\frac{1}{h(\Omega)}\right)^{\frac{1}{\gamma}} ,
\end{equation}
and it is $\gamma$-concave, i.e. $u^{\gamma}$ is concave.
{As a consequence, $u^{\gamma} \in Lip_{loc}(\Omega)$ and
$u \in Lip_{loc}(\Omega)$.}
\end{theorem}

{The bound from below in \eqref{eq_double_bound_u} can actually be improved, see \eqref{eq_lower_bound_u}.}

{
\begin{remark}\label{rem_no_regularity_needed}
No regularity assumption on $\partial \Omega$ is needed in Theorem \ref{thm_main_ucgamma} beyond convexity.
This is an improvement with respect to \cite[Theorem 6.7]{GaSq25}, where $\partial \Omega \in C^{2,\alpha}$ is
required for the corresponding statement when $p>1$: see Theorem \ref{thm_GS_autonom_imp} below, which is what
makes the present generality possible. {The key tool is the stability result of Section \ref{sec_approx} (Lemma \ref{lem_conv_minim} and Proposition \ref{prop_approx_general}), which transfers concavity properties of $p$-Laplace problems from smooth, strongly convex domains and regular data to arbitrary convex domains and less regular data, and which we also apply in Section \ref{sec_further}.}
We stress that the conclusion is stated in $\Omega$, and that one has to distinguish between the local
regularity of $u_{|\Omega}$ and the behaviour of $u$ across $\partial \Omega$: the latter is genuinely
discontinuous in general ({as already mentioned,} %
see Example
\ref{ex_examp_optimal}$(i)$), while {the geometric conditions on $\Omega$ ensuring that $u_{|\Omega}$ is globally Lipschitz, or merely continuous up to $\partial\Omega$, are, to the best of our knowledge, unknown}; see Proposition \ref{prop_no_lip} and Open Problem \ref{op_ext_bound}.
\end{remark}
}

As a consequence of Theorem \ref{thm_main_ucgamma} and of \eqref{eq_sol_eq_Hom} we obtain the announced
improvement of the concavity properties of $H_\Omega$.

\begin{corollary}[{Harmonic convexity of $-H_{\Omega}$}]
\label{cor_var_mean_curv}
Let $\Omega \subset \R^N$, $N\geq 2$, be open, bounded and convex. Then $-H_{\Omega}$ is harmonic convex,
that is $(-H_{\Omega})^{-1}$ is concave in $\Omega$ (equivalently, $H_{\Omega}^{-1}$ is convex); in particular
$H_{\Omega}^{-1} \in Lip_{loc}(\Omega)$. Moreover the exponent $-1$ is optimal in the class of convex domains, {i.e. there exists a domain such that $-H_{\Omega}$ is not $\theta$-convex for any $\theta<-1$}.
\end{corollary}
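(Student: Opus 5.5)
The plan has two parts. The positive claim follows directly from Theorem \ref{thm_main_ucgamma} with $\gamma=1$, and optimality comes from one explicit convex domain.

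\textbf{Harmonic convexity.} Take $\gamma=1$ in Theorem \ref{thm_main_ucgamma}. By \eqref{eq_sol_eq_Hom}, the unique positive solution of \eqref{eq_gen_1laplace_a1} is $u=(-H_\Omega)^{-1}$. The theorem states that $u^{\gamma}=u$ is concave and belongs to $Lip_{loc}(\Omega)$. Hence $(-H_\Omega)^{-1}$ is concave, i.e. $H_\Omega^{-1}=-u$ is convex, and $H_\Omega^{-1}\in Lip_{loc}(\Omega)$. The only point needing care is that $H_\Omega$ is finite and nonzero in $\Omega$. This follows from \eqref{eq_double_bound_u}, which gives $h(\Omega)\le -H_\Omega\le N/\dist(x,\partial\Omega)$, so $u$ is a genuine positive function and the reciprocals make sense.

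\textbf{Optimality.} By the convention of Definition \ref{def_theta_conc}, $\theta$-convexity of $f=-H_\Omega>0$ with $\theta<0$ means that $f^\theta$ is concave. Since $\theta$-convexity becomes more restrictive as $\theta$ decreases, it suffices to find one convex $\Omega$ for which $u=f^{-1}$ satisfies the following: for every $\theta<-1$, the function $u^{-\theta}=u^{s}$ with $s=-\theta>1$ fails to be concave.

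I would take the square $\Omega=(-1,1)^2$ from Example \ref{ex_examp_optimal}. There $\Omega_\lambda$ is the square with corners rounded at radius $1/\lambda$, for $\lambda>h(\Omega)$. Hence for $x$ near a corner, say near $(1,1)$ along the diagonal, the value $-H_\Omega(x)$ is the $\lambda$ such that $x$ lies on the rounding circle of radius $r=1/\lambda$, centred at $(1-r,1-r)$. If $x=(1-t,1-t)$, this condition reads $(r-t)\sqrt2=r$, so
\[
r=u(x)=\frac{\sqrt2\, t}{\sqrt2-1}.
\]
Thus $u$ is linear in $t$ along the diagonal up to the corner, with $u\to0$ there. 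Consequently $u^s\sim c\,t^s$ with $s>1$ along the segment, and this is convex and not linear, hence not concave. This contradicts concavity of $u^s$ on that segment, so $-H_\Omega$ is not $\theta$-convex for any $\theta<-1$.

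\textbf{Main obstacle.} The delicate step is making the corner computation rigorous. One must identify $\Omega_\lambda$ explicitly for the square, namely as the union of balls of radius $1/\lambda$ contained in $\Omega$, valid for $\lambda$ above the Cheeger threshold. One must also check that the diagonal points near the corner are reached by exactly these $\lambda$, using \eqref{eq_represent_Omegalambda}. For this I would rely on the explicit description in Example \ref{ex_examp_optimal} and Lemma \ref{lem_explicit_Homega}, restricting to $t$ small so that $1/r>h(\Omega)$.
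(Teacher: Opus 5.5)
Your overall route is the paper's: apply Theorem \ref{thm_main_ucgamma} (you take $\gamma=1$), then use the diagonal of the square for optimality. Your corner computation is correct: $\sqrt2/(\sqrt2-1)=2+\sqrt2$ reproduces \eqref{eq_H_square}, and $u^{s}=((2+\sqrt2)t)^{s}$ is strictly convex for $s>1$, hence not concave.

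The gap is in the first part, and it is not the one you flag. The identification \eqref{eq_sol_eq_Hom1} between the $BV$ solution $u$ and $(-H_\Omega)^{-1/\gamma}$ holds only almost everywhere. Theorem \ref{thm_main_ucgamma}, through Theorem \ref{thm_rock}, only gives concavity of a representative of $u^\gamma$. But $H_\Omega$ is defined pointwise by \eqref{eq_def_Homega}, and concavity is a pointwise property: a function that agrees a.e.\ with a concave function need not be concave. You must therefore show that $f:=(-H_\Omega)^{-1}$ coincides everywhere in $\Omega$ with the continuous concave representative $g$ of $u^\gamma$. The paper does this in three steps.
\begin{enumerate}
\item For every $\lambda$ the set $\{-H_\Omega<\lambda\}=\bigcup_{\mu<\lambda}\Omega_\mu$ is open. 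Hence $f$ is lower semicontinuous, and $f=g$ a.e.\ forces $f\le g$ everywhere.
\item Suppose $f(x_0)<\frac1\lambda<g(x_0)$ for some $x_0$. Pick a ball $U\ni x_0$ on which $g>\frac1\lambda$. Then a.e.\ point of $U$ lies in $\{-H_\Omega<\lambda\}\subseteq\Omega_\lambda$.
\item Since $\Omega_\lambda$ is open and convex and contains a dense subset of $U$, it contains $U$. Hence $-H_\Omega(x_0)\le\lambda$, which contradicts $f(x_0)<\frac1\lambda$.
\end{enumerate}
Only after this step do concavity and the $Lip_{loc}$ property transfer to $(-H_\Omega)^{-1}$ itself. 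Along the same lines, the pointwise bounds $h(\Omega)\le -H_\Omega\le N/d_\Omega$ should be taken from $H_\Omega\le -h(\Omega)$ and Proposition \ref{prop_finiteness_H}, rather than from \eqref{eq_double_bound_u}, which is again an a.e.\ statement about $u$.
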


{Let us highlight that the counterexample for optimality in Corollary \ref{cor_var_mean_curv} is explicit, and given in Example \ref{ex_examp_optimal}$(iii)${; in this example, as well as in Example \ref{ex_examp_optimal}$(ii)$, the harmonic convexity inequality holds with \emph{equality} along explicit segments, so that the exponent $-1$ is not only non-improvable, but saturated}.}
{Since harmonic convexity implies convexity (see Remark \ref{rem_-f_impl_concav}), Corollary
\ref{cor_var_mean_curv} recovers the concavity of $H_{\Omega}$ of \cite[Proposition 2.2]{AlCa09}, and strictly improves it, reaching the optimal power concavity.
}

\smallskip
{We also highlight that Corollary \ref{cor_var_mean_curv} is of interest when $\Omega$ is not calibrable (see Section \ref{sec_cheeg}),
since otherwise $H_{\Omega}$ is constant; the elongated ellipse of Example
\ref{ex_examp_optimal}$(iv)$ shows that this property may fail also for smooth domains, and provides an example in which $H_{\Omega}$ is bounded, non-constant and
harmonic convex.}

\medskip
{A geometric restatement of Corollary \ref{cor_var_mean_curv} is the following inclusion for the Minkowski
combinations of the sets $\Omega_{\lambda}$.}

\begin{corollary}[Minkowski sum of $\Omega_{\lambda}$]\label{cor_minkowski}
{For any $\lambda_1,\lambda_2\geq h(\Omega)$ and $t \in [0,1]$ we have}
\begin{equation}\label{eq_minkow_sum_int}
t \Omega_{\lambda_1} \oplus (1-t)\Omega_{\lambda_2} \subseteq \Omega_{\frac{1}{\frac{t}{\lambda_1}+\frac{1-t}{\lambda_2}}} \subseteq \Omega_{t\lambda_1+(1-t)\lambda_2},
\end{equation}
where $\Omega_{\lambda}$ is the minimizer of \eqref{eq_min_Omegalamda}.
\end{corollary}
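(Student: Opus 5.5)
We derive Corollary \ref{cor_minkowski} from Corollary \ref{cor_var_mean_curv} together with the level-set representation \eqref{eq_represent_Omegalambda}. Put $\mu:=\big(\tfrac{t}{\lambda_1}+\tfrac{1-t}{\lambda_2}\big)^{-1}$. The second inclusion in \eqref{eq_minkow_sum_int} is immediate: by the harmonic--arithmetic mean inequality $\mu\le t\lambda_1+(1-t)\lambda_2$, and $\lambda\mapsto\Omega_\lambda$ is nondecreasing. The cases $t\in\{0,1\}$ are trivial, so we assume $t\in(0,1)$. We also assume $\Omega_{\lambda_i}\neq\emptyset$; if $\lambda_i=h(\Omega)$ then $\Omega_{\lambda_i}=K$ by our convention.

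For the first inclusion, take $x_1\in\Omega_{\lambda_1}$ and $x_2\in\Omega_{\lambda_2}$, and set $x=tx_1+(1-t)x_2$, which lies in $\Omega$ by convexity. We must show $x\in\Omega_\mu$.

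\emph{Step 1 (strict bounds on the level).} We claim $-H_\Omega(x_i)<\lambda_i$. Since $\Omega_{\lambda_i}$ is open, there is a ball $B(x_i,r)\subset\Omega_{\lambda_i}$, so $H_\Omega\ge-\lambda_i$ on that ball by \eqref{eq_represent_Omegalambda}. A concave function that is $\ge-\lambda_i$ on a ball and equal to $-\lambda_i$ at the center must be constant $-\lambda_i$ on the ball. This can happen: on the Cheeger set $H_\Omega\equiv -h(\Omega)$, by \eqref{eq_H_const_on_Cheeger}. So the claim is not true in general and only $-H_\Omega(x_i)\le\lambda_i$ is available. The plan is therefore to split into cases.

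\emph{Step 2 (case $\lambda_1,\lambda_2>h(\Omega)$).} Here \eqref{eq_represent_Omegalambda} gives $\Omega_{\lambda_i}=\{H_\Omega>-\lambda_i\}$, so $-H_\Omega(x_i)<\lambda_i$. Concavity of $(-H_\Omega)^{-1}$ from Corollary \ref{cor_var_mean_curv} yields
\[
(-H_\Omega(x))^{-1}\ge \frac{t}{-H_\Omega(x_1)}+\frac{1-t}{-H_\Omega(x_2)}>\frac{t}{\lambda_1}+\frac{1-t}{\lambda_2}=\mu^{-1}.
\]
Hence $H_\Omega(x)>-\mu$. Since $\mu>h(\Omega)$, this gives $x\in\Omega_\mu$.

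\emph{Step 3 (case where some $\lambda_i=h(\Omega)$).} Now only the non-strict inequality $-H_\Omega(x_i)\le\lambda_i$ holds, so $-H_\Omega(x)\le\mu$ and $x\in\{H_\Omega\ge-\mu\}$. Since $x_1$ and $x_2$ are interior points of the respective sets, the whole neighbourhood $tB(x_1,r)+(1-t)B(x_2,r)=B(x,r)$ lies in $\{H_\Omega\ge-\mu\}$. Therefore $x\in\operatorname{int}\{H_\Omega\ge-\mu\}=\Omega_\mu$ by the first line of \eqref{eq_represent_Omegalambda}. This interior argument in fact handles all cases uniformly, so Step 2 can be absorbed into it.

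The main subtlety is this boundary case. Strict inequalities fail on the Cheeger plateau, and only the ``interior'' form of \eqref{eq_represent_Omegalambda}, combined with openness of the $\Omega_{\lambda_i}$ and the Minkowski sum of balls, closes the argument.
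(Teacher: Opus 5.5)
Your argument is correct, and it reaches the conclusion by a slightly different mechanism from the paper's proof.

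\textbf{The paper's route.} The paper first disposes of $t\in\{0,1\}$ and of $\lambda_1=\lambda_2$, which is just convexity of $\Omega_\lambda$. It then observes that if $\lambda_1\neq\lambda_2$, at least one of them, say $\lambda_1$, exceeds $h(\Omega)$. The strict line of \eqref{eq_represent_Omegalambda} then gives $-H_\Omega(x_1)<\lambda_1$. Since $t>0$, this single strict inequality propagates through harmonic convexity to $-H_\Omega(z)<\mu$. Because $-H_\Omega(z)$ is an infimum, there is some $\lambda<\mu$ with $z\in\Omega_\lambda\subseteq\Omega_\mu$, by monotonicity.

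\textbf{Your route.} You note that $t\Omega_{\lambda_1}\oplus(1-t)\Omega_{\lambda_2}$ is open, which is your ball computation. Using only the non-strict bounds $-H_\Omega\le\lambda_i$ on $\Omega_{\lambda_i}$ together with harmonic convexity, this set lies in $\{H_\Omega\ge-\mu\}$. Being open, it lies in the interior of that set, which is $\Omega_\mu$ by the first line of \eqref{eq_represent_Omegalambda}.

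\textbf{Comparison.}
\begin{itemize}
\item Your version needs no strict inequality and no case distinction, so Steps 1--2 can simply be dropped.
\item The price is that you rely on the interior characterization $\Omega_\mu=\operatorname{int}\{H_\Omega\ge-\mu\}$, including the case $\mu=h(\Omega)$. The paper uses only the definition of $H_\Omega$, monotonicity, and the strict representation for $\lambda>h(\Omega)$.
\item Your concern motivating Step 3 is partly overcautious. Strictness is genuinely lost only when $\lambda_1=\lambda_2=h(\Omega)$, because one strict inequality at one endpoint already suffices. That is exactly the observation the paper exploits.
\end{itemize}
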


The first inclusion in Corollary \ref{cor_minkowski} is equivalent to the harmonic convexity of $-H_{\Omega}$
of Corollary \ref{cor_var_mean_curv}, while the second one is equivalent to the concavity of $H_{\Omega}$ of
\cite[Proposition 2.2]{AlCa09}. The restriction $\lambda_i \geq h(\Omega)$ only serves to exclude the trivial case
$\Omega_{\lambda_i}=\emptyset$. {See Remark \ref{rem_equal_mink} for comments about equality cases in \eqref{eq_minkow_sum_int}.}

{\begin{remark}\label{rem_AlCa_related}
The harmonic convexity of $-H_{\Omega}$ is not entirely surprising: indeed, in \cite[Theorem 3 and Corollary 4.2]{AlCa09} the authors prove a closely related property for the classical mean curvature under Minkowski interpolation of convex bodies, namely that the reciprocal of the (maximal) mean curvature of $(1-t)K\oplus tL$ is concave in $t$, i.e.\ a harmonic convexity property in the parameter $t$; we recall that, for convex sets, the classical mean curvature is linked to calibrability, and hence to $h(\Omega)$, through \eqref{kappa} below. Their approach is mainly geometric, and relies on deep tools from convex geometry and geometric measure theory: Hessians of support functions and principal radii of curvature, sharp matrix inequalities, Blaschke's rolling ball theorem, Hausdorff stability of curvature bounds, approximation of $C^{1,1}$ convex bodies via generalized mean curvature flow and Almgren--Taylor--Wang minimizing movements, together with the calibrability theory developed in \cite{ACC05}. Combining these tools with the characterization of calibrable convex sets, along the lines of \cite[Proposition 6.1]{AlCa09}, one {may expect to}
obtain a geometric proof of Corollary \ref{cor_minkowski}; to the best of our knowledge, however, the harmonic convexity of $-H_\Omega$ is not explicitly stated in the literature. Moreover, our approach is of a different, analytic nature: it relies on the stability of concavity properties for $p$-Laplace problems proved in Section \ref{sec_approx}, and on the passage to the limit $p\to1^+$ in the singular problems \eqref{eq_gen_plaplace_a1}.

{Let us also mention that a Brunn--Minkowski-type inequality with exponent $-1$ for the Cheeger constant is proved in \cite[Lemma 4.4]{Fto25}.}
\end{remark}}

As a {further} consequence of Corollary \ref{cor_var_mean_curv} and relation \eqref{eq_sol_eq_Hom}, we observe that the concavity information {for \eqref{eq_gen_1laplace_a1}} {holds} for any $\gamma$.
\begin{corollary}[Extension to arbitrary $\gamma>0$]
\label{cor_ext_gammamag1}
{Let $\Omega\subset\R^N$ be open, bounded and convex, and let {$\gamma>0$}.
Then the function $u:=(-H_\Omega)^{-1/\gamma}$ belongs to $BV(\Omega)\cap L^\infty(\Omega)$, it is a solution of \eqref{eq_gen_1laplace_a1} (in the sense of Definition \ref{def_solution} below), it satisfies \eqref{eq_double_bound_u}, $u^\gamma$ is concave and $u\in Lip_{loc}(\Omega)$.}
\end{corollary}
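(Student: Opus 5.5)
The plan is to derive everything from the case $\gamma=1$ and from Corollary \ref{cor_var_mean_curv}. Set $w:=(-H_\Omega)^{-1}$. By Theorem \ref{thm_main_ucgamma} with $\gamma=1$ and \eqref{eq_sol_eq_Hom}, $w$ solves \eqref{eq_gen_1laplace_a1} with $\gamma=1$. It is concave in $\Omega$, hence locally Lipschitz. It satisfies $\dist(x,\partial\Omega)/N\le w\le 1/h(\Omega)$.

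For general $\gamma>0$ we write $u=w^{1/\gamma}$. This makes the claims of the statement immediate:
\begin{itemize}
\item $u^\gamma=w$ is concave;
\item \eqref{eq_double_bound_u} follows by raising the bounds on $w$ to the power $1/\gamma$;
\item $u\in L^\infty(\Omega)$;
\item $u\in Lip_{loc}(\Omega)$, since $t\mapsto t^{1/\gamma}$ is smooth on $(0,\infty)$ and $w$ is locally bounded away from $0$ by the lower bound.
\end{itemize}

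Next I would show $u\in BV(\Omega)$. Concavity and boundedness of $w$ give $w\in BV(\Omega)$: a bounded concave function on a bounded convex set has $|Dw|(\Omega)$ controlled, for instance by the coarea formula, since its superlevel sets are convex subsets of $\Omega$ and so have perimeter at most $P(\Omega)$. The same coarea argument applies to $u$, whose superlevel sets $\{u>s\}=\{w>s^\gamma\}$ are also convex. Hence $|Du|(\Omega)\le \|u\|_\infty P(\Omega)$.

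The substantive part is verifying that $u$ is a solution in the sense of Definition \ref{def_solution}. I would use the structure of that definition, presumably as in \cite{MAOP25}: there should exist a vector field $z\in L^\infty(\Omega;\R^N)$ with $\|z\|_\infty\le1$ such that
\begin{itemize}
\item $-\operatorname{div} z=u^{-\gamma}$ in $\mathcal D'(\Omega)$;
\item $(z,Du)=|Du|$;
\item the boundary condition holds in the weak sense $[z,\nu]\in\sgn(-u)$ $\mathcal H^{N-1}$-a.e. on $\partial\Omega$.
\end{itemize}
The key observation is that the right-hand side does not depend on $\gamma$: $u^{-\gamma}=w^{-1}=-H_\Omega$. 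So I would take the same field $z$ that works for $w$ at $\gamma=1$. The divergence equation is then unchanged.

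For the pairing condition, I would use that $u=\phi(w)$ with $\phi(t)=t^{1/\gamma}$ increasing and locally Lipschitz on the range of $w$ in compact subsets. By the chain rule for Anzellotti pairings, valid for increasing Lipschitz compositions, $(z,D\phi(w))=\phi'(w)(z,Dw)=\phi'(w)|Dw|=|D\phi(w)|$ as measures. Here $w$ is locally Lipschitz, so $Dw$ is absolutely continuous and the identity reduces to the pointwise one $z\cdot\nabla u=\phi'(w)\,z\cdot\nabla w=\phi'(w)|\nabla w|=|\nabla u|$ a.e.

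The boundary condition $[z,\nu]=-1$ is also unchanged, since it involves only $z$ and the fact that $u>0$ near $\partial\Omega$ exactly where $w>0$.

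The main obstacle is that the definition may require the pairing identity or the energy condition globally, for instance with test functions in $BV(\Omega)$ up to the boundary, where $u$ is not Lipschitz. If so, I would first try to prove the global identity $\int_\Omega (z,Du)=|Du|(\Omega)$ via the Green formula, together with monotone approximation $\phi_\varepsilon(w)$ by truncations near $\partial\Omega$. As an alternative, one could appeal directly to the characterization in Section \ref{sec_singular}, which shows that $(-H_\Omega)^{-1/\gamma}$ is the solution for every $\gamma>0$ on convex domains, and obtain the existence claim from there.
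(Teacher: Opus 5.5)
Your proposal is correct and follows essentially the paper's route. The paper's one-line proof combines Corollary \ref{cor_var_mean_curv} with the identification \eqref{eq_sol_eq_Hom1} of Section \ref{sec_singular}, and with the lower bound \eqref{eq_lower_bound_u}, which gives local Lipschitz continuity when $\gamma>1$. Section \ref{sec_singular} already contains your two other ingredients: the same field $z=-\zeta$ works for every $\gamma>0$ because $u^{-\gamma}=-H_\Omega$, and $u\in BV(\Omega)$ follows from the coarea formula on convex superlevel sets. Your worry about a global pairing identity does not arise, since Definition \ref{def_solution} only asks for $(z,DT_k(u))=|DT_k(u)|$ as measures in $\Omega$. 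The alternative boundary condition in that definition (vanishing averages of $u$) transfers from $w$ to $u=w^{1/\gamma}$ by Jensen's inequality when $\gamma>1$, and by the bound $w^{1/\gamma}\le \norm{w}_{\infty}^{1/\gamma-1}w$ when $\gamma\le1$.
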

Let us remark that, {unlike} Theorem \ref{thm_main_ucgamma}, the validity of $u \in Lip_{loc}$ in Corollary \ref{cor_ext_gammamag1} relies also on the bound from below of $u$. An important feature of Corollary \ref{cor_ext_gammamag1} is that the corresponding result for $p>1$, {$p\neq 2$}, is indeed not available and remains open (see Open Problem \ref{op_gammamag1}).

\medskip

{\noindent \emph{Plan of the paper.} In Section \ref{sec_prelim} we collect the necessary background:
variational mean curvature and
Cheeger sets (\S \ref{sec_cheeg}), the notion of solution for $1$-Laplace problems (\S \ref{sec_notation}), {the connection \eqref{eq_sol_eq_Hom} (\S \ref{sec_singular})}, and the notions of
power concavity together with the results of \cite{GaSq25} and \cite{Roc97} that we shall use
(\S \ref{sec_concavity_notions}). Section \ref{sec_approx} contains a general approximation result
(Proposition \ref{prop_approx_general}) which allows to transfer concavity properties from smooth domains and
smooth data to general convex domains, and its consequences for $p>1$. In Section \ref{sec_limit} we let
$p \to 1^{+}$ and prove Theorem \ref{thm_main_ucgamma} and {related Corollaries}.
Section
\ref{sec_further} contains some further remarks and extensions, and Section \ref{sec_open} some open problems. {Explicit examples are collected in Appendix \ref{sec_examples}.}}

\section{Preliminaries}
\label{sec_prelim}

{\noindent \emph{Notations.} Throughout this paper, $\Omega \subset \R^N$ ($N\geq2$) is an open bounded set, $|E|$ and $P(E)$ denote the Lebesgue
measure and the perimeter of a measurable set $E$, $\omega_N:=|B_1|$, and $\mathcal{H}^{N-1}$ is the
$(N-1)$-dimensional Hausdorff measure. From now on, $B_r(c)$ will denote the \emph{open} ball in $\R^N$.

For $\rho>0$ we denote $d_{\Omega}(\cdot):=\dist(\cdot,\partial \Omega)$ and by
\begin{equation}\label{eq_def_inner_parall}
\Omega^{\rho}:=\{x \in \Omega \mid d_{\Omega}(x)\geq \rho\}
\end{equation}
the {(closed)} inner parallel set of $\Omega$ at distance $\rho$, not to be confused with the minimizing sets $\Omega_{\lambda}$ defined through \eqref{eq_min_Omegalamda}.
{Finally, whenever the (classical) mean curvature $\kappa(x)$ is well defined and bounded for a set $\Omega$, we define
$$\bar{\kappa}:={\operatorname*{ess\,sup}_{x\in\partial \Omega}} \, \kappa(x);$$
here we intend $\kappa(x)$ as its normalized value $\kappa= \frac{1}{N-1} \sum_i \kappa_i$, $\kappa_i$ being principal curvatures.
}

\medskip
\subsection{Cheeger sets and variational mean curvature}
\label{sec_cheeg}

Let $\Omega $ be an open set of $\re^N$: a \emph{Cheeger set} $K$ of $\Omega$ is a subset of $\Omega$ that
minimizes the problem
\begin{equation}\label{che_2}
\min_{E \subset \Omega} \left( P(E)-\lambda_K |E|\right)
\qquad \hbox{with}\qquad \lambda_K=\frac{P(K)}{|K|}.
\end{equation}
It is known \cite[Theorem {1}]{AlCa09} {(see also \cite{CCN07})} that in any non-trivial convex body $\Omega$ there exists a unique
Cheeger set $K\subset \Omega$ and that, moreover, $K$ is convex and $\partial K$ is of class $C^{1,1}$.
The (unique) $\lambda_K$ is thus called the \emph{Cheeger constant} of $\Omega$, denoted by $h(\Omega)$.
When $\lambda_K$ is substituted by a general $\lambda>0$, we obtain $\Omega_{\lambda}$ as minimizer, already discussed in \eqref{eq_min_Omegalamda}; {see also \cite{StZi97} for related isoperimetric problems in convex boxes}.

If $\Omega$ itself minimizes \eqref{che_2} with $\lambda_K=\frac{P(\Omega)}{|\Omega|}$, then $\Omega$ is called
\emph{self-Cheeger}, and in this case
$
h(\Omega)=\frac{P(\Omega)}{|\Omega|}.
$
For a convex set $\Omega$, being self-Cheeger is equivalent \cite[Proposition 2]{ACC05} to being a
{\it calibrable set} in the sense of \cite{ACC05}.
{By \cite[Theorem 4]{BCN02} (see also \cite[Corollary 1]{ACC05}, \cite[Theorem 2]{KaLa06} and \cite[Remark 2.8]{AlCa09}),
a non-trivial
convex body $\Omega \subset \R^N$ is calibrable if and only if $\partial \Omega \in C^{1,1}$ and the {(classical)} mean
curvature $\kappa$ of $\partial \Omega$ satisfies}
 \begin{equation}\label{kappa}
(N-1)\, {\operatorname*{ess\,sup}_{x\in\partial \Omega}} \, \kappa (x)\leq \frac{P(\Omega)}{|\Omega|},
\end{equation}
or, equivalently, if the right-hand side is substituted by $h(\Omega)$.
{In particular, by the regularity results of Massari type \cite{Mas75,GMT83,CCN10}, every self-Cheeger set
has $C^{1,1}$ boundary; the converse is false without \eqref{kappa}, the standard counterexample being an
elongated ellipse, whose curvature at the poles exceeds $h(\Omega)/(N-1)$ (see Example
\ref{ex_examp_optimal}$(iv)$)}.

{Recall $H_{\Omega}$ defined in \eqref{eq_def_Homega}: we see that $H_{\Omega}$ belongs to ${\mathscr{V}}^1(\Omega)$, i.e. it is a variational mean curvature. Indeed, by
\cite[Lemma 2.2]{GoMa94}, ${\Omega}$
minimizes $\mc{F}_{H_{\Omega}}$ among the subsets of $\Omega$; on the other hand,
since $\Omega$ is convex one has $P(X)\geq P(X\cap \Omega)$ for every set $X$ of finite perimeter, while
$\int_X H_{\Omega}=\int_{X\cap \Omega} H_{\Omega}$ because $H_\Omega$ vanishes outside $\Omega$, so that
$$\mc{F}_{H_{\Omega}}(X)\geq \mc{F}_{H_{\Omega}}(X\cap \Omega)\geq \mc{F}_{H_{\Omega}}(\Omega)$$
for every such $X$, and hence the claim.}
{Recalling
{the definitions \eqref{eq_min_Omegalamda} and \eqref{eq_def_Homega},}
a relevant property, which follows at once from \eqref{eq_def_Homega}, $\Omega_{\lambda}=\emptyset$ for $\lambda<h(\Omega)$ and
$\Omega_{h(\Omega)}=K$, is that}
\begin{equation}\label{eq_H_const_on_Cheeger}
{H_{\Omega} \equiv - h(\Omega) \quad \hbox{ on the Cheeger set $K$ of $\Omega$}}
\end{equation}
and $H_{\Omega}\neq -h(\Omega)$ on $\Omega \setminus \overline{K}$;
{in particular $H_{\Omega}$ is constant on the whole of $\Omega$ if and only if $\Omega$ is calibrable
\cite{ACC05}. This is the reason why the results of the present paper are of interest precisely for
non-calibrable domains.}
{Notice moreover that, by monotonicity of $\Omega_{\lambda}$ w.r.t. $\lambda$ we have
\begin{equation}\label{eq_estim_H_dall'alto}
H_{\Omega} \leq -h(\Omega) \quad \hbox{on $\Omega$}.
\end{equation}
{We refer to \cite{Par11,Leo15} for surveys on the Cheeger problem.}}

\smallskip

{We introduce now two tools which will be helpful in getting explicit estimates and representation formulas for $\Omega_{\lambda}$ and $H_{\Omega}$.}

{Recalling} \eqref{eq_def_inner_parall}, for any $\rho>0$ we set
\begin{equation}\label{eq_def_Arho}
A_{\rho}:= \Omega^{\rho}\oplus B_{\rho}(0) = \bigcup\big\{B_{\rho}(c) \mid B_{\rho}(c)\subseteq \Omega \big\}
\end{equation}

the Minkowski sum of $\Omega^{\rho}$ with $B_\rho$, that is, the union of all the balls
of radius $\rho$ contained in $\Omega$; notice that, being $B_{\rho}(0)$ open and $\Omega$ convex, it is not important if \eqref{eq_def_inner_parall} is defined as closed or open set as long as $\rho$ is strictly smaller than the inradius (compare \cite{KaLa06, LeSa20, LNS17,LePr16}).

Let moreover
\begin{equation}\label{eq_def_rOmega}
r_{\Omega}(x) := \sup\big\{\rho>0 \mid \exists\, B_{\rho}(c)\subseteq \Omega \hbox{ with } x \in
\overline{B_{\rho}(c)}\big\}
\end{equation}
be the radius of the largest ball
contained in $\Omega$ and {containing}
$x \in \Omega${; ${2}r_\Omega$ is known in image analysis as the \emph{local thickness} of $\Omega$ \cite{HiRu97}, and it is closely related to the morphological opening of $\Omega$ by balls
\cite{Ser82}.}

{
Recalled $d_{\Omega}(\cdot)=\dist(\cdot,\partial \Omega)$ the
distance function, we have the following preliminary lemma.
\begin{lemma}[Characterization of $r_{\Omega}$]
\label{lem_char_rOmega}
Let $\Omega \subset \R^N$ be open, bounded and convex.
We have
\begin{equation}\label{eq_rOmega_equiv}
r_{\Omega}(x) = \max_{c \in \Omega}\big\{ d_{\Omega}(c) \mid |x-c|\leq d_{\Omega}(c)\big\},
\end{equation}
and \eqref{eq_def_rOmega} is a maximum. Moreover
\begin{equation}\label{eq_rOmega_Arho}
r_{\Omega}(x) = \sup\{ \rho>0 \mid x \in A_{\rho}\}.
\end{equation}
Finally, it holds that $r_{\Omega}\geq d_{\Omega}$.
\end{lemma}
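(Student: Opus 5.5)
The plan is to prove the three claims in the order they are stated, all by elementary geometry of balls inside a convex set.

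\emph{Step 1: rewrite the admissible set.} For $c\in\Omega$ and $\rho>0$, an open ball satisfies $B_\rho(c)\subseteq\Omega$ if and only if $\rho\leq d_\Omega(c)$. Moreover, $x\in\overline{B_\rho(c)}$ means exactly $|x-c|\leq\rho$. Hence the set of admissible radii in \eqref{eq_def_rOmega} is
$$\{\rho>0 \mid \exists\, c\in\Omega:\ |x-c|\leq\rho\leq d_\Omega(c)\}.$$
For fixed $c$ with $|x-c|\leq d_\Omega(c)$, the largest admissible radius is $\rho=d_\Omega(c)$. Therefore the supremum in \eqref{eq_def_rOmega} equals
$$\sup\{d_\Omega(c)\mid c\in\Omega,\ |x-c|\leq d_\Omega(c)\}.$$
This set is nonempty, since $c=x$ is admissible; this also gives $r_\Omega(x)\geq d_\Omega(x)$, which is the final claim.

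\emph{Step 2: the supremum is attained.} This is the only step requiring care, because $\Omega$ is open and a maximizing sequence of centres might escape to $\partial\Omega$. Take $c_n$ admissible with $d_\Omega(c_n)\to r_\Omega(x)$. By boundedness, up to a subsequence $c_n\to c\in\overline\Omega$. Since $d_\Omega$ (extended by zero outside $\Omega$) is continuous, we get $d_\Omega(c)=r_\Omega(x)\geq d_\Omega(x)>0$, so in fact $c\in\Omega$. Passing to the limit in $|x-c_n|\leq d_\Omega(c_n)$ gives $|x-c|\leq d_\Omega(c)$, so $c$ is admissible. Thus both \eqref{eq_rOmega_equiv} and \eqref{eq_def_rOmega} are maxima, the latter realized by the ball $B_{d_\Omega(c)}(c)$.

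\emph{Step 3: the characterization \eqref{eq_rOmega_Arho}.} By \eqref{eq_def_Arho}, $x\in A_\rho$ if and only if $x\in B_\rho(c)$ for some $c$ with $B_\rho(c)\subseteq\Omega$, i.e.\ $|x-c|<\rho\leq d_\Omega(c)$. The only difference from Step 1 is that the open ball replaces the closed one.
\begin{itemize}
\item[$(\leq)$] If $x\in A_\rho$, then $\rho$ is admissible in \eqref{eq_def_rOmega}, so $\rho\leq r_\Omega(x)$.
\item[$(\geq)$] Let $c$ be the maximizer from Step 2 and set $R=r_\Omega(x)=d_\Omega(c)$, so $|x-c|\leq R$. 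For $\rho<R$, slide the centre towards $x$ by setting $c_\rho:=c+(R-\rho)\frac{x-c}{|x-c|}$ (or $c_\rho=c$ if $x=c$). Then $B_\rho(c_\rho)\subseteq B_R(c)\subseteq\Omega$, and $|x-c_\rho|=\big||x-c|-(R-\rho)\big|<\rho$. This holds because $|x-c|-(R-\rho)\leq\rho$ with strict inequality whenever $\rho<R$ and $|x-c|<R$; the borderline case $|x-c|=R$ gives $|x-c_\rho|=\rho$ exactly. To handle all cases uniformly, use instead the shift $R-\rho'$ with $\rho<\rho'<R$, which yields $x\in A_{\rho'}$.
\end{itemize}
Hence $x\in A_\rho$ for every $\rho<R$, and the supremum in \eqref{eq_rOmega_Arho} is at least $R$.

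Convexity is not essential in any of these steps; it only ensures that $d_\Omega$ is concave and that $A_\rho$ agrees with $\Omega^\rho\oplus B_\rho$. I would identify the compactness in Step 2 as the main point, and note that it relies on the positivity $d_\Omega(x)>0$ to keep the limit centre inside $\Omega$.
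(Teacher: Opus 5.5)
Your Steps 1 and 2 are fine and essentially match the paper's compactness argument. The paper keeps the centres away from $\partial\Omega$ through $d_\Omega(c)\geq\frac12 d_\Omega(x)$; you do it at the limit through $d_\Omega(c)=r_\Omega(x)\geq d_\Omega(x)>0$. Both work.

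\textbf{The gap is in the direction $(\geq)$ of your Step 3,} in the borderline case $|x-c|=R$. This is exactly the case the paper calls the hardest one, and it genuinely occurs: in the square near a vertex, $x$ lies on the boundary of its maximal disk. Your proposed fix cannot work. Since $B_R(c)$ is open and $|x-c|=R$, we have $x\notin B_R(c)$, so \emph{no} ball contained in $B_R(c)$ contains $x$, whatever its centre and radius. For instance, sliding the centre by $R-\rho'$ with radius $\rho'$ still gives $|x-c_{\rho'}|=\rho'$. Any other choice that keeps the ball inside $B_R(c)$ fails for the same reason.

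\textbf{The missing idea} is that the smaller ball must be allowed to protrude from $B_R(c)$ near $x$, using that $x$ is an interior point of $\Omega$. There are two ways to do this.
\begin{itemize}
\item[(a)] The paper's route, with no convexity: fix $\rho<R$ and take the centre $tc+(1-t)x$ with $t$ slightly smaller than $\rho/R$. Then $|x-(tc+(1-t)x)|=tR<\rho$, so $x\in B_\rho(tc+(1-t)x)$. The part of this ball lying outside $B_R(c)$ shrinks to $x$ as $t\to\rho/R$, so it is contained in some $B_\delta(x)\subseteq\Omega$.
\item[(b)] A route using convexity: if $B_\delta(x)\subseteq\Omega$, then $(1-s)B_R(c)\oplus sB_\delta(x)=B_{(1-s)R+s\delta}((1-s)c+sx)\subseteq\Omega$. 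Moreover $|x-((1-s)c+sx)|=(1-s)R<(1-s)R+s\delta$ for $s\in(0,1]$, so $x\in A_{(1-s)R+s\delta}$. Letting $s\to0^+$ gives the claim.
\end{itemize}

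A minor point: in the non-borderline case your slid centre overshoots $x$ when $0<|x-c|<R-2\rho$. So the assertion $|x-c_\rho|<\rho$ for every $\rho<R$ is not correct as written. This is harmless, since only $\rho$ close to $R$ matters for the supremum.
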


\begin{proof}
Observing that the largest  ball centred at $c$ has radius exactly $d_{\Omega}(c)$, one has \eqref{eq_rOmega_equiv} with supremum in place of maximum. To show that it is attained, we argue by compactness, since the admissible set $\{c \in \Omega \mid d_{\Omega}(c)-|x-c|\geq 0\} \subset \{c \in \Omega \mid d_{\Omega}(c) \geq \frac{1}{2}d_{\Omega}(x) >0\}
$
is closed and stays at positive distance from $\partial \Omega$. This moreover implies that \eqref{eq_def_rOmega} is a maximum as well.

We now verify \eqref{eq_rOmega_Arho}: $\geq$ is clear, thus we prove $\leq$.
Let $r= r_{\Omega}(x)$ with $x \in \overline{B_r(c)}$, $B_r(c) \subset \Omega$; the hardest case is $x \in \partial B_r(c)$, so we assume $|c-x|=r$.
 Let $\rho <r$ be fixed. We state that there exists $t_{\rho} \in (0,1)$ such that $x \in B_{\rho}(t_{\rho} c + (1-t_{\rho}) x) \subset \Omega$. First we have $|x-(t_{\rho} c + (1-t_{\rho}) x)| = t_{\rho} |c-x| \leq t_{\rho} r <\rho$ for $t_{\rho}<\rho/r$.

 Since $B_r(c) \subset \Omega$, we need to discuss the inclusion $B_{\rho}(t_{\rho} c + (1-t_{\rho}) x) \setminus B_r(c) \subset \Omega$.
 For $t=\rho/r$ we have that $B_{\rho}(t c + (1-t) x)$ is internally tangent to $B_r(c)$, with contact point in $x$; since $x \in \Omega$, there exists $B_{\delta}(x) \Subset \Omega$, thus for $t_{\rho}$ close to $\rho/r$ we have
 $$B_{\rho}(t_{\rho} c + (1-t_{\rho}) x) \setminus B_r(c) \subset B_{\delta}(x)\subset \Omega$$
 and hence the claim.

 Hence $x \in A_{\rho}$ and thus $\sup\{ \sigma>0 \mid x \in A_{\sigma}\}\geq \rho$, and the claim follows by sending $\rho$ to $r$.
\end{proof}
}

{We show first that $H_{\Omega}$ never takes the value $-\infty$ inside $\Omega$, with an optimal
quantitative bound. The statement, which will also be used to make Example \ref{ex_examp_optimal} of Appendix
\ref{sec_examples} precise, goes essentially
back to \cite[Remark 2.5]{GoMa94}; we follow the localized version of \cite[Lemma 2.2]{ScSc23}.}

{
\begin{proposition}[Lower bound for $H_{\Omega}$]
\label{prop_finiteness_H}
Let $\Omega\subset\R^N$ be open, bounded {and convex}. For every $x\in\Omega$,
\begin{equation}\label{eq_bound_H_dist}
H_\Omega(x)\ \geq \ -\frac{N}{r_{\Omega}(x)}\geq \ -\frac{N}{d_{\Omega}(x)}. %
\end{equation}
Moreover, the constant $N$ is optimal. %
\end{proposition}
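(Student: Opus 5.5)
The plan is to show that every ball contained in $\Omega$ is contained in $\Omega_\lambda$ for suitable $\lambda$, and then optimize over balls containing $x$ using Lemma \ref{lem_char_rOmega}.

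\emph{Key claim.} If $B_\rho(c)\subseteq\Omega$ and $\lambda>N/\rho$, then $B_\rho(c)\subseteq\Omega_\lambda$.

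To prove the claim, we use the standard comparison argument for minimizers of \eqref{eq_min_Omegalamda}. Set $E=\Omega_\lambda$ and $B=B_\rho(c)$. By minimality of $E$ against the competitor $E\cup B\subseteq\Omega$,
\[
P(E)-\lambda|E|\le P(E\cup B)-\lambda|E\cup B|.
\]
Submodularity of the perimeter, $P(E\cup B)+P(E\cap B)\le P(E)+P(B)$, then gives
\[
P(B)-\lambda|B| \ge P(E\cap B)-\lambda|E\cap B|,
\]
so $E\cap B$ competes with $B$ in the problem $\min_{F\subseteq B}(P(F)-\lambda|F|)$.

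We next use that for $\lambda>N/\rho = P(B)/|B| = h(B)$ the ball is calibrable, so $B$ is the \emph{unique} minimizer of this problem among subsets of $B$. Concretely, for $F\subseteq B$ with $|B\setminus F|>0$ one wants
\[
P(B)-\lambda|B|<P(F)-\lambda|F|.
\]
This follows by calibration: with $z(y)=(y-c)/\rho$ one has $\operatorname{div} z=N/\rho$, $|z|\le1$, and $z\cdot\nu=1$ on $\partial B$. Hence
\[
P(B)-P(F)\le \int_{B\setminus F}\frac{N}{\rho}\,dy<\lambda|B\setminus F|.
\]
Applied to $F=E\cap B$, this forces $|B\setminus E|=0$. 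Since $E$ is open and convex, it follows that $B\subseteq E$, up to handling the null set; for this, see Remark \ref{rem_openclosed} and the convexity of $\Omega_\lambda$.

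\emph{Conclusion of the bound.} Given $x\in\Omega$, Lemma \ref{lem_char_rOmega} and \eqref{eq_rOmega_Arho} show that for every $\rho<r_\Omega(x)$ there is an open ball $B_\rho(c)\subseteq\Omega$ containing $x$. The claim then gives $x\in\Omega_\lambda$ for all $\lambda>N/\rho$, so
\[
-H_\Omega(x)\le N/\rho .
\]
Letting $\rho\to r_\Omega(x)$ yields the first inequality in \eqref{eq_bound_H_dist}. The second follows from $r_\Omega\ge d_\Omega$ in Lemma \ref{lem_char_rOmega}.

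\emph{Optimality.} Take $\Omega=B_r$. Then $h(B_r)=N/r$, and $B_r$ is calibrable, so $H_{B_r}\equiv -N/r$ by \eqref{eq_H_const_on_Cheeger}. Since $r_{B_r}\equiv r$, equality holds in the first inequality. At the center, where $d_{B_r}=r$, equality holds in the second as well. Thus no constant smaller than $N$ works.

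The main obstacle is expected to be the strict-uniqueness step: showing that $B$ is the unique minimizer for $\lambda>h(B)$, together with the measure-zero and openness bookkeeping needed to pass from $|B\setminus E|=0$ to $B\subseteq\Omega_\lambda$. The calibration argument handles the first part. For the second, I would use that $\Omega_\lambda$ is an open convex set and that an open convex set containing a ball up to a null set contains the whole ball.
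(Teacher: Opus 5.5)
Your proof is correct and follows the paper's argument. The paper likewise shows that every ball $B_\rho(c)\subseteq\Omega$ containing $x$ lies in $\Omega_\lambda$ for $\lambda>N/\rho$, lets $\rho\to r_\Omega(x)$, and uses the ball for optimality; the only difference is that the paper quotes this absorption property from \cite[Lemma 2.2]{ScSc23}, whereas you derive it directly from submodularity of the perimeter and the calibration $z(y)=(y-c)/\rho$, and you correctly pass from $|B\setminus\Omega_\lambda|=0$ to $B\subseteq\Omega_\lambda$ using that $\Omega_\lambda$ is open and convex.
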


\begin{proof}
Fix $x\in\Omega$; by \eqref{eq_rOmega_Arho}, %
there exists $\rho \nearrow
 r_{\Omega}$ and %
 $c \in \Omega$ %
such that $x \in B_{\rho}(c) \subset \Omega$.
 By
\cite[Lemma 2.2]{ScSc23} (see also \cite[Remark 2.5]{GoMa94}), for every $\lambda\geq N/\rho$ and every
$F \subseteq \Omega$ one has
$$P\big(F\cup B_{\rho}(c)\big) - \lambda \big|F \cup B_{\rho}(c)\big| \ \leq\ P(F)-\lambda|F|,$$
with strict inequality unless $B_{\rho}(c)\subseteq F$ when $\lambda>N/\rho$. Applying this with $F=\Omega_\lambda$,
which is a minimizer of \eqref{eq_min_Omegalamda}, we get $B_{\rho}(c)\subseteq\Omega_\lambda$, and in particular
$x \in \Omega_{\lambda}$, for every $\lambda>N/\rho$. Hence
$$-H_\Omega(x) = \inf\{\lambda>0 \mid x\in\Omega_\lambda\} \leq \frac{N}{\rho},$$
which, by sending $\rho \to r_{\Omega}(x)$, is \eqref{eq_bound_H_dist}. The constant is optimal since, for $\Omega=B_R(x_0)$, one has
$-H_\Omega(x_0)=\frac{N}{R}=\frac{N}{r_{\Omega}(x_0)}=\frac{N}{\dist(x_0,\partial\Omega)}$ (see Example
\ref{ex_examp_optimal}$(i)$). %
\end{proof}
}

\begin{remark}[On $H_{\Omega}$ bounded]
\label{rem_ifflinf}
When $\Omega$ is convex, we have that $H_{\Omega}\in L^{\infty}(\Omega)$ if and only if
$\partial \Omega \in C^{1,1}$.
{More precisely, in this case
$$\norm{H_\Omega}_{L^\infty(\Omega)}=\max\Big\{\frac{P(\Omega)}{|\Omega|},\,(N-1)\norm{\kappa}_{L^\infty(\partial\Omega)}\Big\}=:M_\Omega .$$
Indeed, if $\partial\Omega\in C^{1,1}$, by \cite[Theorem 2]{AlCa09} one has $\Omega_\mu=\Omega$ if and only if $\mu\geq M_\Omega$; hence $-H_\Omega\le M_\Omega$ in $\Omega$ by \eqref{eq_def_Homega}, while for $\lambda<M_\Omega$ the open convex set $\Omega_\lambda$ is strictly contained in $\Omega$, so that $\Omega\setminus\overline{\Omega_\lambda}$ is a non-empty open set on which $-H_\Omega\geq\lambda$. Conversely, if $H_\Omega\in L^\infty(\Omega)$, then $\Omega_\mu=\Omega$ for $\mu>\norm{H_\Omega}_\infty$ by \eqref{eq_represent_Omegalambda}, and $\partial\Omega\in C^{1,1}$ by the argument of \cite[Proposition 2.7]{AlCa09} (see also \cite{MoPe15}).}

{To understand what happens at a non-regular point, consider a square $\Omega \subset \R^2$ (see
Example \ref{ex_examp_optimal}$(iii)$). %
At a corner
$x_0\in\partial \Omega$ the geometric curvature is not defined in the classical sense; on the other hand the
free boundary $\partial \Omega_\lambda \cap \Omega$ near $x_0$ consists of a circular arc of radius $1/\lambda$
(see also Example \ref{ex_examp_optimal}$(ii)$), %
so that in order for
$x_n \to x_0$ to belong to
$\Omega_{\lambda}$ one must let $\lambda \to +\infty$. Therefore $H_{\Omega}(x_n) \to -\infty$, consistently
with Proposition \ref{prop_finiteness_H}, whose bound degenerates as $\dist(x_n,\partial \Omega)\to0$.}

Let us finally mention that, when $\partial \Omega \in C^{1,\alpha}$, $\alpha<1$, then it has been recently proved \cite{ScSc26} that $H_{\Omega} \in L^p(\Omega)$ for any $p<\frac{1+\alpha}{1-\alpha}$; in particular ${\mathscr{V}}^p(\Omega)\neq \emptyset$, and hence $H_{\Omega}$ is the unique minimizer of the $L^p(\Omega)$-norm in ${\mathscr{V}}^p(\Omega)$.
\end{remark}

In the planar case the whole family $\{\Omega_{\lambda}\}$, and hence $H_{\Omega}$ itself, admits a
completely explicit description, which we now record:
{it will turn all the computations of the examples in Appendix
\ref{sec_examples} into elementary geometry, and it will yield a direct proof of Corollary
\ref{cor_var_mean_curv} in the planar case, see Remark \ref{rem_elementary_planar}.}
From \cite[Theorem 2.3, Proposition 2.1, Lemma 5.1]{LeSa20} and \cite[Theorem 1 and Theorem 2]{KaLa06} {(see also \cite[Introduction]{AlCa09},
\cite[Theorem 1.2]{LeSa20})} one deduces the following characterization:

\begin{lemma}[Structure of $\Omega_{\lambda}$ in $\R^2$, \cite{LeSa20, KaLa06}]
\label{lem_planar_structure}
Let $\Omega \subset \R^2$ be open, bounded and convex, and let $\lambda \geq h(\Omega)$. Then the non-trivial
minimizer of \eqref{eq_min_Omegalamda} is unique and
\begin{equation}\label{eq_Omegalambda_planar}
\Omega_{\lambda} = A_{1/\lambda} = \Omega^{1/\lambda}\oplus B_{1/\lambda} .
\end{equation}
Moreover, in the limiting case $\lambda=h(\Omega)$, for which $\Omega_{h(\Omega)}=K$ is the Cheeger set of
$\Omega$,
 the radius
 $$R_{\Omega}:=1/h(\Omega)$$
 is characterized by the \emph{inner Cheeger formula}%
 \footnote{Indeed, the set $\Omega^{R_{\Omega}}$ is also called \emph{inner Cheeger set}.}
\begin{equation}\label{eq_inner_cheeger}
|\Omega^{R_{\Omega}}| = \pi R_{\Omega}^{2} .
\end{equation}

 Finally, the following three statements are equivalent
 \begin{enumerate}
 \item $\Omega$ is self-Cheeger,
 \item $\partial \Omega \in C^{1,1}$ and
 \begin{equation}\label{kappa_2}
 \bar{\kappa} \leq \frac{P(\Omega)}{|\Omega|},
 \end{equation}
 \item
 $\partial \Omega \in C^{1,1}$ and
 \begin{equation}\label{kappa_3}
 \bar{\kappa} \leq h(\Omega).
 \end{equation}
 \end{enumerate}
\end{lemma}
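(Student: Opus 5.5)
The lemma is stated as a consequence of \cite{LeSa20, KaLa06}, so the plan is to assemble it from those results and check that the hypotheses and conventions match. Throughout, $\Omega_\lambda$ is taken open and $A_\rho$ is defined by \eqref{eq_def_Arho}.

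\emph{Uniqueness and \eqref{eq_Omegalambda_planar}.} For $\lambda>h(\Omega)$, uniqueness and convexity of the non-trivial minimizer are already recalled from \cite[Proposition 2.4]{AlCa09}. For $\lambda=h(\Omega)$ they follow from \cite[Theorem 1]{AlCa09}. The identification $\Omega_\lambda=A_{1/\lambda}$ is \cite[Theorem 2.3, Proposition 2.1]{LeSa20} (see also \cite[Theorem 1]{KaLa06}).

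To make the argument self-contained at the level of ideas, I would also sketch why the free boundary $\partial\Omega_\lambda\cap\Omega$ consists of arcs of radius $1/\lambda$:
\begin{enumerate}
\item The Euler--Lagrange equation says this free boundary has constant curvature $\lambda$, and by Massari-type regularity \cite{Mas75,GMT83} it meets $\partial\Omega$ tangentially.
\item Hence $\Omega_\lambda$ is a union of discs of radius $1/\lambda$ contained in $\Omega$, so $\Omega_\lambda\subseteq A_{1/\lambda}$.
\item For the reverse inclusion, \cite[Lemma 2.2]{ScSc23}, as used in Proposition \ref{prop_finiteness_H}, shows that adding a disc of radius $\rho$ decreases the energy when $\lambda\geq 2/\rho$. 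This is not enough on its own.
\item The planar sharpening needed is \cite[Lemma 5.1]{LeSa20}: a convex minimizer already containing tangent arcs of curvature $\lambda$ is the whole $A_{1/\lambda}$.
\end{enumerate}
The Minkowski-sum form $A_{1/\lambda}=\Omega^{1/\lambda}\oplus B_{1/\lambda}$ is just definition \eqref{eq_def_Arho}. For $\lambda\geq h(\Omega)$, the radius $1/\lambda$ does not exceed the inradius, so the open/closed convention is harmless.

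\emph{Inner Cheeger formula \eqref{eq_inner_cheeger}.} Set $\rho=1/\lambda$. By Steiner's formula for the convex body $\Omega^\rho$,
\[
|A_\rho|=|\Omega^\rho|+\rho P(\Omega^\rho)+\pi\rho^2,\qquad P(A_\rho)=P(\Omega^\rho)+2\pi\rho.
\]
Therefore
\[
P(A_\rho)-\lambda|A_\rho| = -\frac{|\Omega^\rho|}{\rho}+\pi\rho .
\]
The Cheeger constant is the smallest $\lambda$ at which this quantity reaches $0$, which is the threshold for a non-trivial minimizer. This gives $|\Omega^{R_\Omega}|=\pi R_\Omega^2$ with $R_\Omega=1/h(\Omega)$, which is \cite[Theorem 1]{KaLa06}. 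Monotonicity of $\rho\mapsto|\Omega^\rho|/\rho^2$ gives uniqueness of the root.

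\emph{Equivalences.} The implication (1)$\Leftrightarrow$(2) is \eqref{kappa} with $N=2$, i.e.\ \cite[Theorem 4]{BCN02} and \cite[Theorem 2]{KaLa06}.

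For (2)$\Rightarrow$(3): if $\Omega$ is self-Cheeger, then $h(\Omega)=P(\Omega)/|\Omega|$, so the two bounds coincide.

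For (3)$\Rightarrow$(1): if $\partial\Omega\in C^{1,1}$ and $\bar\kappa\leq h$, then every boundary point lies on a disc of radius $1/\bar\kappa\geq R_\Omega$ contained in $\Omega$. This is Blaschke rolling, valid in the plane for $C^{1,1}$ convex curves with curvature bound. Hence $A_{R_\Omega}=\Omega$, so $K=\Omega$ and $\Omega$ is self-Cheeger.

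\emph{Expected obstacle.} The main obstacle is the reverse inclusion $A_{1/\lambda}\subseteq\Omega_\lambda$, that is, that the minimizer is the full union of the admissible discs. I would rely on \cite{LeSa20} rather than reprove it.
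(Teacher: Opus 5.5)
Your proposal is correct and follows the same route as the paper. The paper gives no argument beyond assembling the statement from \cite[Theorem 2.3, Proposition 2.1, Lemma 5.1]{LeSa20} and \cite[Theorems 1 and 2]{KaLa06}, and your Steiner derivation of \eqref{eq_inner_cheeger} and rolling-ball proof of (3)$\Rightarrow$(1) are sound supplements to those citations.

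In your optional sketch, the step to ``union of discs of radius $1/\lambda$'' also needs the bound $\kappa\leq\lambda$ on the contact set $\partial\Omega_\lambda\cap\partial\Omega$ (as in \cite[Proposition 2.7]{AlCa09}, cf.\ Remark \ref{rem_opt_anyN}). Granted this, your own Steiner identity applied to $\Omega_\lambda=C\oplus B_{\rho}$, with $\rho=1/\lambda$ and $C\subseteq\Omega^{\rho}$, gives $P(\Omega_\lambda)-\lambda|\Omega_\lambda|=\pi\rho-|C|/\rho\geq\pi\rho-|\Omega^{\rho}|/\rho$. Uniqueness of the non-trivial minimizer then forces $\Omega_\lambda=A_{1/\lambda}$, so the obstacle you anticipate does not actually arise.
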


\begin{remark}\label{rem_planar_only}
Lemma \ref{lem_planar_structure} is genuinely two-dimensional. The limiting case $\lambda=h(\Omega)$ of
\eqref{eq_Omegalambda_planar} is the classical characterization of the Cheeger set of a planar convex body of
\cite[Theorem 1]{KaLa06}, later extended to Jordan domains without necks in \cite[Theorem 1.4]{LNS17}, and it
comes together with the inner Cheeger formula \eqref{eq_inner_cheeger}.
{For $N \geq 3$
no analogue is available: the free boundary of $\Omega_{\lambda}$ is a hypersurface of constant mean
curvature, which need not be spherical, so that $\Omega_{\lambda}$ is in general \emph{not} the inner parallel
body enlarged back {(see \cite[Section 6]{KaLa06}, \cite{Fto25} and references therein)}.
Explicit characterizations of Cheeger sets in dimension $N \geq 3$ are available only in very special
cases, such as sets of revolution, and are missing even for the unit cube; see \cite[Section 1]{CaSa23} and
\cite[Section 1]{Fto25} for a discussion.

 This is one of the reasons why the approach through the $1$-Laplacian, which is
dimension-free, is of interest; see also Remark \ref{rem_elementary_planar}.}
\end{remark}

{Lemma \ref{lem_planar_structure} yields a closed formula for $H_{\Omega}$.

\begin{lemma}[Formula for $H_{\Omega}$ in $\R^2$]
\label{lem_explicit_Homega}
We have
\begin{equation}\label{eq_formula_H_planar}
- H_{\Omega}(x) = \max\Big\{ h(\Omega), \frac{1}{r_{\Omega}(x)}\Big\},
\end{equation}
for every open bounded convex $\Omega \subset \R^2$.
In particular the Cheeger set $K$ verifies
\begin{equation}\label{eq_K_romega}
K = \operatorname{int} \left\{r_{\Omega} \geq
 \frac{1}{h(\Omega)}\right\}.
 \end{equation}

{Finally, assume $\partial\Omega\in C^{1,1}$,
and let $\bar x\in\partial\Omega\setminus\partial K$ be a point at which $\partial\Omega$ is twice differentiable with $\kappa(\bar x)=\bar\kappa$. Then}
\begin{equation}\label{eq_magic_coincid}
-H_{\Omega}(x) \to {\bar\kappa} \quad \hbox{for $x \to \bar{x}$}.
\end{equation}
{Notice that $\partial\Omega\setminus\partial K\neq\emptyset$ only if $\Omega$ is not calibrable, in which case $\bar\kappa>P(\Omega)/|\Omega|\geq h(\Omega)$ by the equivalences above.}
\end{lemma}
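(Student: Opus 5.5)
The plan is to combine the definition \eqref{eq_def_Homega} with the planar structure result Lemma \ref{lem_planar_structure} and the characterization of $r_\Omega$ in Lemma \ref{lem_char_rOmega}.

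\emph{Formula \eqref{eq_formula_H_planar}.} Fix $x\in\Omega$. For $\lambda> h(\Omega)$, \eqref{eq_Omegalambda_planar} gives $\Omega_\lambda=A_{1/\lambda}$. Since the sets $A_\rho$ decrease in $\rho$, \eqref{eq_rOmega_Arho} gives $x\in A_\rho$ for every $\rho<r_\Omega(x)$. I will also check that $x\notin A_\rho$ for $\rho>r_\Omega(x)$, which follows directly from the definition \eqref{eq_def_rOmega}.

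Now take the infimum in \eqref{eq_def_Homega}, split into two cases.
\begin{itemize}
\item If $1/r_\Omega(x)> h(\Omega)$, then $x\in\Omega_\lambda$ for every $\lambda>1/r_\Omega(x)$. Conversely, $x\notin\Omega_\lambda$ for every $h(\Omega)<\lambda<1/r_\Omega(x)$, and $\Omega_\lambda=\emptyset$ for $\lambda<h(\Omega)$. Hence $-H_\Omega(x)=1/r_\Omega(x)$.
\item If $1/r_\Omega(x)\le h(\Omega)$, then $x\in\Omega_\lambda$ for all $\lambda>h(\Omega)$, while $\Omega_\lambda=\emptyset$ below $h(\Omega)$. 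Hence $-H_\Omega(x)=h(\Omega)$.
\end{itemize}
This gives the max formula. The borderline value $\lambda=h(\Omega)$ does not affect the infimum.

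\emph{Formula \eqref{eq_K_romega}.} Use \eqref{eq_represent_Omegalambda} with $\lambda=h(\Omega)$: $K=\operatorname{int}\{H_\Omega\ge -h(\Omega)\}$. By \eqref{eq_formula_H_planar}, this set is exactly $\operatorname{int}\{r_\Omega\ge 1/h(\Omega)\}$.

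\emph{Limit \eqref{eq_magic_coincid}.} This is the main obstacle, and the geometry is the following.

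First, since $\bar x\notin\partial K$ and $K=\Omega_{h(\Omega)}$ is closed-convex-rounded, points $x$ near $\bar x$ lie outside $\overline K$. There, by \eqref{eq_formula_H_planar} and \eqref{eq_K_romega}, $-H_\Omega(x)=1/r_\Omega(x)$. So I must show $r_\Omega(x)\to 1/\bar\kappa$.

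\emph{Upper bound $\liminf 1/r_\Omega\ge\bar\kappa$.*/} Let $B_\rho(c)\subseteq\Omega$ be a ball containing $x$, with $x\to\bar x$ along a sequence. Pass to the limit of the centres and radii: we get a ball $B_{\rho_0}(c_0)\subseteq\Omega$ with $\bar x\in\overline{B_{\rho_0}(c_0)}$. Such a ball touches $\partial\Omega$ at $\bar x$ from inside, where $\partial\Omega$ is twice differentiable with curvature $\bar\kappa$. The comparison of second-order expansions forces $\rho_0\le 1/\bar\kappa$, so $\limsup r_\Omega(x)\le1/\bar\kappa$.

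\emph{Lower bound $\liminf r_\Omega\ge 1/\bar\kappa$.} This is the delicate direction, and I would get it from the $C^{1,1}$ regularity. Since $\kappa\le\bar\kappa$ a.e.\ on $\partial\Omega$ and $\Omega$ is convex and planar, Blaschke's rolling theorem in its $C^{1,1}$ form gives that a disc of radius $1/\bar\kappa$ rolls freely inside $\Omega$. Here I would invoke \cite{KaLa06} or the argument behind Remark \ref{rem_ifflinf}. Therefore every point of $\Omega$ lies in a disc of radius $1/\bar\kappa$ contained in $\Omega$, i.e.\ $r_\Omega\ge 1/\bar\kappa$ everywhere.

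Combining the two bounds gives $-H_\Omega(x)=1/r_\Omega(x)\to\bar\kappa$. The final remark follows from (1)$\Leftrightarrow$(2)$\Leftrightarrow$(3) of Lemma \ref{lem_planar_structure}, together with the fact that $K=\Omega$ when $\Omega$ is self-Cheeger.
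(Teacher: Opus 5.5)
Your proposal is correct and follows essentially the same route as the paper. The max formula comes from combining $\Omega_\lambda=A_{1/\lambda}$ with \eqref{eq_rOmega_Arho}. The boundary limit uses the same two ingredients as the paper: a compactness argument on maximal discs, which must become internally tangent at $\bar x$ and hence have radius at most the osculating one $1/\bar\kappa$; and Blaschke's rolling theorem for $C^{1,1}$ convex curves, applied at the projection of $x$ onto $\partial\Omega$ to get $r_\Omega\geq 1/\bar\kappa$ (the paper cites \cite[Corollary 6]{Rev16} for this, not \cite{KaLa06}).

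One detail is worth making explicit: the value $\lambda=h(\Omega)$ is harmless for the infimum because, by monotonicity of $\lambda\mapsto\Omega_\lambda$, $x\in\Omega_{h(\Omega)}$ would force $x\in\Omega_\lambda$ for every $\lambda>h(\Omega)$.
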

\begin{proof}
 As a consequence {of \eqref{eq_rOmega_Arho}}
$$\inf\{\lambda>0 \mid x \in A_{1/\lambda}\} = \frac{1}{r_{\Omega}(x)}$$
and combining it with
$$
\{\lambda>0 \mid x \in \Omega_{\lambda}\} =\{\lambda\geq h(\Omega) \mid x \in A_{1/\lambda}\} =[h(\Omega), +\infty) \cap \{\lambda>0 \mid x \in A_{1/\lambda}\}
$$
we have the claim.

To show \eqref{eq_magic_coincid}, notice that, for $x$ close to $\bar{x}$, we have $-H_{\Omega}(x)= 1/r_{\Omega}(x)$.

Let $x_n \in \Omega$, $x_n \to \bar{x}$, and $r_{\Omega}(x_n)=d_{\Omega}(c_n)$; %
by compactness $c_n \to \bar{c}$, and $\bar{x} \in \overline{B_{d_{\Omega}(\bar{c})}(\bar{c})}$, thus it must be $\bar{x} \in \partial B_{d_{\Omega}(\bar{c})}(\bar{c})$ and $d_{\Omega}(\bar{c})\leq 1/\bar{\kappa}$ (the osculating circle). Finally, considering $y_n$ the projection of $x_n$ on $\partial \Omega$, by Blaschke's rolling theorem \cite[Corollary 6]{Rev16} (see also \cite[Lemma 1.7]{LNS17}),
 there exists a ball of radius $1/\bar{\kappa}$ inside $\Omega$ and tangent $y_n$; for $n$ large, $x_n$ belongs to such a ball, thus $d_{\Omega}(c_n) \geq 1/\bar{\kappa}$ and the inequality passes to the limit, obtaining $d_{\Omega}(\bar{c})= 1/\bar{\kappa}$, which is the claim.
\end{proof}}

{Formula \eqref{eq_formula_H_planar} is essentially contained in \cite[Theorem 2]{ACC05b}, where it is shown that, for a planar bounded convex set $\Omega$, the minimizing total variation flow starting from $\chi_\Omega$ is given by $\big(1-t/\min\{R_\Omega,r_\Omega(x)\}\big)^+\chi_\Omega(x)$, while this flow is $(1+tH_\Omega)^+\chi_\Omega$ \cite[Section 5]{ACC05}, \cite{BCN02} {(see also \cite{ACM04})}; we have included the short direct proof above for the reader's convenience.}

Notice that \eqref{eq_magic_coincid} is \emph{not} an instance of
\cite[Remark 2.1]{GoMa94}, which requires $H_{\Omega}$ to be continuous at the boundary point and hence does
not apply, being $H_{\Omega}(\bar{x})=0$ by definition.

\begin{remark}[On the choice $\Omega_{\lambda}$ open]
\label{rem_openclosed}
We point out that our choice of $\Omega_{\lambda}$ open is arbitrary, but in accordance with \cite{KaLa06} (see also \cite{LeSa20, LNS17}); in \cite{AlCa09, ACC05}, for instance, no precise choice is made. Different choices of the representatives would lead to similar statements. For instance, if $\Omega_{\lambda}$ is chosen closed (w.r.t. the topology of $\Omega$), as implicitly in \cite{GoMa94}, then we would have
$\Omega_{\lambda} = \{ x \in \Omega \mid H_{\Omega}\geq - \lambda\}$ in \eqref{eq_represent_Omegalambda},
$K = \{ x \in \Omega \mid r_{\Omega}(x) \geq 1/h(\Omega)\}$ in \eqref{eq_K_romega},
$A_{\rho} = \overline{\Omega^{\rho}\oplus B_{\rho}(0)}^{\Omega} %
= \bigcup\big\{\overline{B_{\rho}(c)} \cap \Omega \mid B_{\rho}(c)\subseteq \Omega \big\}
$ in \eqref{eq_def_Arho} (notice that here it is important that \eqref{eq_def_inner_parall} is closed to have $A_{\rho}$ closed),
while the proof of \eqref{eq_rOmega_Arho} and \eqref{eq_formula_H_planar} %
simplifies a bit (since we would have $x \in A_{\rho} \iff r_{\Omega}(x)\geq \rho$ and hence $\big\{\lambda>0 \mid x \in \Omega_{\lambda}\big\} = \left[\max\left\{h(\Omega),1/r_{\Omega}(x)\right\},+\infty\right)$), as well as modifications of the proofs of {Corollary \ref{cor_var_mean_curv} (where $f$ is now upper semicontinuous)} and Corollary \ref{cor_minkowski} (exploiting that closed sets are closed under infinite intersections) are needed.
\end{remark}

\subsection{%
Notion of solution for the $1$-Laplacian}
\label{sec_notation}

We refer to \cite{AFP00,Mag12} for the theory of $BV$ functions and
sets of finite perimeter. For $k>0$ we set $T_k(s):=\max(-k,\min(s,k))$.}
{Since the $1$-Laplace operator is $0$-homogeneous in the gradient, problem \eqref{eq_gen_1laplace_a1} has
to be understood in a suitably weak way, through the Anzellotti theory of $L^{\infty}$-divergence-measure
vector fields \cite{Anz83}. We denote by
$$\mathscr{DM}^{\infty}(\Omega):=\big\{ z \in L^{\infty}(\Omega)^N \mid \operatorname{div} z \hbox{ is a
Radon measure with finite total variation in } \Omega\big\},$$
by $(z,Dv)$ the Anzellotti pairing between $z \in \mathscr{DM}^{\infty}(\Omega)$ and $v \in BV(\Omega)\cap
L^{\infty}(\Omega)$, and by $[z,\nu]$ the weak trace on $\partial \Omega$ of the normal component of $z$; we
refer to \cite{Anz83,ChFr99,CrDC19} for the precise definitions and for the
Green formula
$\int_{\Omega} v^{*} \operatorname{div} z + \int_{\Omega}(z,Dv) = \int_{\partial \Omega}[vz,\nu]\,
d\mathcal{H}^{N-1}$.}

{
\begin{definition}[\cite{DGOP19,MAOP25}]
\label{def_solution}
Let $\gamma>0$. A non-negative function $u \in BV_{\rm loc}(\Omega)$ is a \emph{solution} of
\eqref{eq_gen_1laplace_a1} if there exists $z \in \mathscr{DM}^{\infty}(\Omega)$ with
$\norm{z}_{L^{\infty}(\Omega)^N}\leq 1$ such that
\begin{equation}\label{eq_def_sol_eq}
-\operatorname{div} z = u^{-\gamma} \quad \hbox{as measures in $\Omega$},
\end{equation}
\begin{equation}\label{eq_def_sol_pairing}
(z, DT_k(u)) = |DT_k(u)| \quad \hbox{as measures in $\Omega$, for every $k>0$},
\end{equation}
and, for $\mathcal{H}^{N-1}$-a.e.\ $x \in \partial \Omega$, either $[z,\nu](x)=-1$ or $$\lim_{\varepsilon\to 0^+}\fint_{\Omega\cap {B_{\varepsilon}(x)}} u (y) dy = 0.$$
\end{definition}
}

{
\begin{remark}\label{rem_boundary_datum}
Conditions \eqref{eq_def_sol_eq}--\eqref{eq_def_sol_pairing} are the weak formulation of the equation, while
the last requirement is the (by now classical) weak formulation of the homogeneous Dirichlet datum; when
$u \in BV(\Omega)$, so that $u$ has a trace on $\partial \Omega$, it is equivalent to
\begin{equation}\label{eq_weak_boundary}
u\big(1+[z,\nu]\big)=0 \qquad \mathcal{H}^{N-1}\hbox{-a.e.\ on } \partial \Omega .
\end{equation}
In particular the boundary datum need \emph{not} be attained pointwise, and $u$ may have a strictly positive
trace. The model example is $\Omega=B_r$, for which $u \equiv (r/N)^{1/\gamma}$ is a (constant) solution: it
suffices to take $z(x)=-x/r$, so that $|z|\leq1$, $-\operatorname{div}z=N/r=u^{-\gamma}$, $(z,Du)=|Du|=0$ and
$[z,\nu]\equiv -1$ on $\partial B_r$.
\end{remark}
}

{We shall repeatedly use the following existence and approximation result, which is
\cite[Theorems 5.1, 5.3]{DGS19} (see also \cite[Theorem 3.4]{DGOP19}, \cite{LOPdL21,OlPe25} {and \cite{CST16}}).}

\begin{theorem} [\cite{DGS19,DGOP19}]
\label{thm_exist_decicco_a1}
Let $\Omega \subset \R^N$ be open, bounded, with $\partial \Omega \in Lip$, and $\gamma \in (0,1]$. Then there
exists a unique positive solution $u \in BV(\Omega) \cap L^{\infty}(\Omega)$ to \eqref{eq_gen_1laplace_a1}
{in the sense of Definition \ref{def_solution}}.
Such solution is a limit of solutions $u_p \in W^{1,p}_0(\Omega) \cap L^{\infty}(\Omega)$, $p>1$, of
\begin{equation}\label{eq_gen_plaplace_a1}
\begin{cases}
-\Delta_p u_p = u_p^{-\gamma} & \quad \hbox{in $\Omega$}, \\
u_p = 0 &\quad \hbox{on $\partial \Omega$},
\end{cases}
\end{equation}
with $u_p \to u$ a.e., strongly in $L^q(\Omega)$ for any $q \in [1, \frac{N}{N-1})$, and weakly in
$L^{\frac{N}{N-1}}(\Omega)$, as $p\to 1^+$.
\end{theorem}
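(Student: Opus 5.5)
This statement is quoted from \cite{DGS19,DGOP19}. If I had to reprove it, I would use the standard approximation scheme through \eqref{eq_gen_plaplace_a1}.

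\emph{Existence of $u_p$ and uniform estimates.} For fixed $p>1$, I would first solve the nonsingular problems $-\Delta_p v_n=(v_n+1/n)^{-\gamma}$, $v_n\in W^{1,p}_0(\Omega)$. By comparison, $v_n$ increases in $n$ and is bounded below on compact sets by a positive multiple of a $p$-torsion-type subsolution. Letting $n\to\infty$ gives $u_p$, and $\gamma\le1$ ensures $u_p\in W^{1,p}_0(\Omega)$: testing with $u_p$ yields $\int|\nabla u_p|^p=\int u_p^{1-\gamma}$.

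Next I would prove an $L^\infty$ bound uniform in $p$ near $1$. Testing with $G_k(u_p)=(u_p-k)^+$ gives $\int|\nabla G_k(u_p)|^p\le k^{-\gamma}\int G_k(u_p)$. Combining Young's inequality $|\xi|\le \frac1p|\xi|^p+\frac{p-1}{p}$ with the Sobolev inequality in $W^{1,1}$ and Hölder on $A_k=\{u_p>k\}$ gives
\[
\|G_k(u_p)\|_{L^{\frac{N}{N-1}}}\le C\big(k^{-\gamma}|A_k|^{1/N}\|G_k(u_p)\|_{L^{\frac{N}{N-1}}}+(p-1)|A_k|\big).
\]
For $k$ large, $Ck^{-\gamma}|\Omega|^{1/N}<1$, and a Stampacchia iteration then gives $\|u_p\|_\infty\le M$ independent of $p$. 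From $\int|\nabla u_p|^p\le M^{1-\gamma}|\Omega|$ and Young again, $(u_p)$ is bounded in $BV(\Omega)$. Compactness of $BV\hookrightarrow L^q$, $q<\frac N{N-1}$, gives the stated convergences to some $u\in BV(\Omega)\cap L^\infty(\Omega)$ along a sequence; uniqueness, proved last, upgrades this to the full family.

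\emph{Passing to the limit.} The fields $z_p:=|\nabla u_p|^{p-2}\nabla u_p$ satisfy $\|z_p\|_{L^q}^{q}\le (\int|\nabla u_p|^p)^{q(p-1)/p}|\Omega|^{1-q(p-1)/p}$ for every fixed $q$. So $z_p\rightharpoonup z$ weakly in every $L^q$, and letting $q\to\infty$ gives $\|z\|_\infty\le1$.

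The step I expect to be the main obstacle is a positive lower bound $u_p\ge c_\omega>0$ on each $\omega\Subset\Omega$, uniform in $p$. It is needed to pass to the limit in $u_p^{-\gamma}$, and hence in $-\operatorname{div}z_p=u_p^{-\gamma}$, giving \eqref{eq_def_sol_eq}. I would try comparison with the solution of $-\Delta_p w=M^{-\gamma}$, since $u_p^{-\gamma}\ge M^{-\gamma}$, using explicit radial $p$-torsion functions on balls inside $\Omega$. These behave like $(M^{-\gamma}/N)^{1/(p-1)}(R^{p'}-|x|^{p'})$, which can degenerate as $p\to1$. To fix this I would first rescale the equation, or compare with the constant solution $(R/N)^{1/\gamma}$ on balls $B_R\subset\Omega$ from Remark \ref{rem_boundary_datum}, built at the $p$-level as subsolutions.

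For the pairing \eqref{eq_def_sol_pairing} and the boundary condition, I would test the $p$-equation with $T_k(u_p)\varphi$ and with $T_k(u_p)$. Using lower semicontinuity of $v\mapsto\int\varphi|Dv|+\int_{\partial\Omega}|v|$ and the Anzellotti Green formula, I would obtain $(z,DT_k(u))\ge|DT_k(u)|$ together with $T_k(u)(1+[z,\nu])=0$, which is \eqref{eq_weak_boundary}. The reverse inequality $(z,DT_k(u))\le|DT_k(u)|$ is automatic from $\|z\|_\infty\le1$.

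\emph{Uniqueness.} Given two solutions $(u,z)$ and $(v,w)$, I would test the difference of the equations with $T_k(u)-T_k(v)$ via the Green formula. The pairing terms are nonnegative, the boundary terms have the right sign because of \eqref{eq_weak_boundary} and $|z|,|w|\le1$, and $s\mapsto s^{-\gamma}$ is decreasing. Letting $k\to\infty$ forces $u=v$ almost everywhere. The singular term near $\partial\Omega$ is controlled by the local lower bounds and by Fatou's lemma.
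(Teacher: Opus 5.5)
The paper gives no proof of this statement; it is quoted from \cite[Theorems 5.1, 5.3]{DGS19}. Your outline is the standard $p\to1^+$ approximation scheme behind that result, and the steps you actually carry out are sound. This includes the sign bookkeeping in the uniqueness argument: $\int_\Omega (T_k(u)-T_k(v))(u^{-\gamma}-v^{-\gamma})\le 0$, while the pairing and boundary terms are $\ge 0$. Hence everything vanishes, and $u=v$ once $k>\max\{\norm{u}_\infty,\norm{v}_\infty\}$.

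The one step that is still a plan rather than an argument is the uniform local lower bound. It closes exactly along your second suggestion, as follows.
\begin{itemize}
\item[(i)] For $B_R(x_0)\Subset\Omega$ set $\phi_p(x):=A_p\big(1-(|x-x_0|/R)^{p'}\big)$. Since $(p'-1)(p-1)=1$, the flux $|\nabla\phi_p|^{p-2}\nabla\phi_p=-(A_pp')^{p-1}R^{-p}(x-x_0)$ is linear, so $-\Delta_p\phi_p=N(A_pp')^{p-1}R^{-p}$ is constant.
\item[(ii)] Choosing $A_p^{\,p-1+\gamma}:=R^p/\big(N(p')^{p-1}\big)$ gives $-\Delta_p\phi_p=A_p^{-\gamma}\le\phi_p^{-\gamma}$ in $B_R(x_0)$.
\item[(iii)] Test with $(\phi_p-u_p)^+$, which is compactly supported in $\Omega$, where $u_p$ is bounded away from zero for fixed $p$. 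Since $s\mapsto s^{-\gamma}$ is decreasing, this gives $u_p\ge\phi_p$, and in particular $u_p(x_0)\ge A_p$.
\item[(iv)] As $p\to1^+$ one has $(p')^{p-1}\to1$, so $A_p\to(R/N)^{1/\gamma}$. Since $A_p$ does not depend on $x_0$, you get $u_p\ge\frac12(R/N)^{1/\gamma}$ on $\{d_\Omega>R\}$ for $p$ close to $1$.
\end{itemize}
In the limit this gives $u\ge(d_\Omega/N)^{1/\gamma}$ a.e., i.e.\ the lower bound in \eqref{eq_double_bound_u}, now for Lipschitz $\Omega$.

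You can also bypass lower bounds altogether, which is necessary for merely nonnegative data. For $0\le\varphi\in C^1_c(\Omega)$, Fatou gives $\int u^{-\gamma}\varphi\le\int|\nabla\varphi|$, hence $u>0$ a.e. Then test with $V_\delta(u_p)\varphi$, where $V_\delta=1$ on $[0,\delta]$, $V_\delta=0$ on $[2\delta,\infty)$, and $V_\delta$ is affine in between. This yields $\limsup_{p}\int_{\{u_p\le\delta\}}u_p^{-\gamma}\varphi\le\int_{\{u<2\delta\}}|\nabla\varphi|$, which tends to $0$ as $\delta\to0$.

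Two smaller points. First, Definition \ref{def_solution} requires $z\in\mathscr{DM}^\infty(\Omega)$ globally, i.e.\ $u^{-\gamma}\in L^1(\Omega)$. This is also what legitimates the Green formula in your boundary-condition step. It follows from $|z|\le1$ by testing $-\operatorname{div}z=u^{-\gamma}$ with Lipschitz cut-offs $\varphi_n\uparrow\chi_\Omega$ satisfying $\sup_n\int|\nabla\varphi_n|<\infty$, which exist because $\partial\Omega$ is Lipschitz. Second, in the uniqueness step the singular integrals are finite by that same requirement of the definition. You should not appeal there to local lower bounds or Fatou, since these are not available a priori for an arbitrary solution.
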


\subsection{Variational mean curvature and singular 1-Laplace problems}
\label{sec_singular}

As we already mentioned, if $\Omega$ is convex, then the solution $u$ of \eqref{eq_gen_1laplace_a1} in the sense of Definition \ref{def_solution} satisfies
\begin{equation}\label{eq_sol_eq_Hom1}
u(x) = (-H_{\Omega}(x))^{-\frac{1}{\gamma}} \qquad {\hbox{for a.e. $x \in \Omega$}}.
\end{equation}

Identity \eqref{eq_sol_eq_Hom1} is stated in \cite[Example 2]{GOP22} and \cite[Example 3]{MAOP25}; for the
reader's convenience we recall why it holds. In \cite{MoPe15} it is proved that $v:=-H_{\Omega}$ is the large
solution of $\Delta_1 v = v$ in $\Omega$, $v=\infty$ on $\partial \Omega$; in particular there exists
$\zeta \in \mathscr{DM}^{\infty}_{\rm loc}(\Omega)$, $|\zeta|\leq1$, with
$\operatorname{div} \zeta = v$ and $(\zeta,Dv)=|Dv|$, and $[\zeta,\nu]=1$ on $\partial \Omega$.
Let us observe that $\zeta$ is globally, and not merely locally, a divergence-measure field: indeed
$\operatorname{div} \zeta = v = -H_{\Omega}$ and, by \eqref{eq_optimality_H},
$\norm{H_{\Omega}}_{L^1(\Omega)}=P(\Omega)<\infty$, so that $\operatorname{div}\zeta$ is a finite measure in
$\Omega$; since $|\zeta|\leq1$, this gives $\zeta \in \mathscr{DM}^{\infty}(\Omega)$. Consequently the normal
trace $[\zeta,\nu]\in L^{\infty}(\partial \Omega)$ is well defined in the sense of Anzellotti \cite{Anz83}.%

Set $u:=v^{-1/\gamma}$ and $z:=-\zeta$. Since $t \mapsto t^{-1/\gamma}$ is decreasing and locally Lipschitz on
$(0,\infty)$, one has $(z,DT_k(u))=|DT_k(u)|$ for every $k>0$, while
$-\operatorname{div} z = \operatorname{div} \zeta = v = u^{-\gamma}$ and $[z,\nu]=-1$; hence $u$ is a solution
in the sense of Definition \ref{def_solution}.

{Moreover $u\in BV(\Omega)\cap L^\infty(\Omega)$: indeed $u\le h(\Omega)^{-1/\gamma}$ by \eqref{eq_estim_H_dall'alto}
and, for every $s>0$, $\{u>s\}=\bigcup_{\mu<s^{-\gamma}}\Omega_\mu$ is a convex subset of $\Omega$, so that $P(\{u>s\})\le P(\Omega)$ and, by the coarea formula {\cite[Theorem 3.40]{AFP00}}, {$|Du|(\Omega)\le\int_0^{\norm{u}_{\infty}}P(\{u>s\})\, ds\le\norm{u}_{L^\infty(\Omega)}P(\Omega)$, so that $u\in BV(\Omega)$}.
Since the solution is unique {in $BV(\Omega)\cap L^\infty(\Omega)$} {by \cite[Theorem 3.5]{MAOP25}, %
}
\eqref{eq_sol_eq_Hom1} follows.
}

{
\begin{remark}[A quantitative lower bound for the solution]
\label{rem_lower_bound_u}
Combining Proposition \ref{prop_finiteness_H} with \eqref{eq_sol_eq_Hom1}, we obtain the %
completely explicit
lower bound for the solution of \eqref{eq_gen_1laplace_a1} on a convex domain
\begin{equation}\label{eq_lower_bound_u}
u(x)\ { \geq \ \left(\frac{r_{\Omega}(x)}{N}\right)^{\frac{1}{\gamma}}} \geq \ \left(\frac{d_{\Omega}(x)}{N}\right)^{\frac{1}{\gamma}}, \qquad x \in \Omega,
\end{equation}
{where the constant is sharp since equality is attained} %
if $\Omega=B_R(x_0)$ and $x=x_0$ %
by Example
\ref{ex_examp_optimal}$(i)$ and $d_{\Omega}(x_0) %
=R$.
Some comments are in order. First, \eqref{eq_lower_bound_u} is a
\emph{reverse} estimate with respect to what one is used to for $p>1$: {for $p>1$ the solution of
\eqref{eq_gen_plaplace_a1} vanishes on $\partial\Omega$ and %
it behaves near $\partial\Omega$ like $d_{\Omega}$ if $\gamma<1$ and like $d_{\Omega}\,(-\log(d_{\Omega}))^{1/p}$ if $\gamma=1$ (see \cite{CRT77,LaMc91} for $p=2$ and \cite{GST07} for $p\neq2$),} while for $p=1$ the boundary datum is lost and only a positive lower bound
survives. Second, estimate
\eqref{eq_lower_bound_u} together with \eqref{eq_estim_H_dall'alto} yield at once the two-sided bound
\eqref{eq_double_bound_u}, which in particular re-proves that $u \in L^{\infty}(\Omega)$, with an explicit
bound;
the
maximum is attained exactly on {$\overline{K}\cap\Omega$, $K$ being} the Cheeger set of $\Omega$.
\end{remark}
}

\subsection{Power concavity: definitions and known results}
\label{sec_concavity_notions}

We recall the following classical definitions (see e.g. \cite{Ken85}).
{
\begin{definition}[$\theta$-concavity and $\theta$-convexity]
\label{def_theta_conc}
Let $\Omega \subset \mathbb{R}^N$ be a convex set and let $f: \Omega \to (0, +\infty)$. Given
$\theta \neq 0$, we say that $f$ is
\begin{itemize}[leftmargin=*]
\item \emph{$\theta$-concave} if $f(t x + (1-t)y) \ge \big( t f(x)^\theta +
(1-t) f(y)^\theta \big)^{1/\theta}$ for all $x,y \in \Omega$ and $t \in [0,1]$; equivalently, if
$f^\theta$ is concave when $\theta > 0$, and convex when $\theta < 0$;
\item \emph{$\theta$-convex} if $f(t x + (1-t)y) \le \big( t f(x)^\theta +
(1-t) f(y)^\theta \big)^{1/\theta}$ for all $x,y \in \Omega$ and $t \in [0,1]$; equivalently, if
$f^\theta$ is convex when $\theta > 0$, and concave when $\theta < 0$.
\end{itemize}
The cases $\theta= 1$ correspond to (concavity and) convexity, and the cases $\theta=-1$ are also called
\emph{harmonic} (concavity and) convexity: %
thus $f$ is harmonic concave if $1/f$ is convex, and
harmonic convex if $1/f$ is concave.
{For $\theta=0$ we say that $f$ is $0$-concave (or \emph{log-concave}) if $f(t x+(1-t)y)\geq f(x)^{t}f(y)^{1-t}$, i.e.\ if $\log(f)$ is concave.} {Finally, we say that $f$ is $(+\infty)$-concave if $f$ is constant, while $f$ is $(-\infty)$-concave if it is quasi-concave (i.e. %
all the superlevel sets are convex).}
\end{definition}
}

\begin{remark}\label{rem_-f_impl_concav}
{It can be seen \cite{Ken85} that $\theta$-concavity gets stronger as $\theta$ \emph{increases}, while $\theta$-convexity gets stronger as $\theta$ \emph{decreases}. In particular, if
$f>0$ and $\theta<-1$, then
$$ \hbox{$f$ is $\theta$-convex} \implies \hbox{$f$ is $(-1)$-convex} \implies \hbox{$f$ is convex}.$$
The last implication, which is the one relevant for us, is immediate: if $1/f$ is concave then, by the
inequality between the weighted harmonic and arithmetic means,
$$f(t x + (1-t)y) \le \Big(\tfrac{t}{f(x)}+\tfrac{1-t}{f(y)}\Big)^{-1} \le
t f(x)+(1-t)f(y).$$
Applying this to $f=-H_{\Omega}$: the harmonic convexity of $-H_{\Omega}$ given by Corollary
\ref{cor_var_mean_curv} implies the convexity of $-H_{\Omega}$, that is the concavity of $H_{\Omega}$ obtained
in \cite[Proposition 2.2]{AlCa09}. Equivalently, if $g<0$ then $g^{-1}$ convex implies $g$ concave.}
\end{remark}

We shall use the following stability result about pointwise limits of convex functions, see e.g.\
\cite[Theorem 10.8]{Roc97}.

\begin{theorem}[Pointwise stability of convexity]
\label{thm_rock}
Let $\Omega \subset \R^N$ be an open, convex set. Let $(v_n)_n$ be a sequence of (finite) convex functions on
$\Omega$. Assume that $v_n \to v$ pointwise on a dense subset of $\Omega$, with $v$ finite. Then {$v$ can be uniquely extended to $\Omega$ in such a way}
$v_n \to v$ in $L^{\infty}_{\rm loc}(\Omega)$ and $v$ is convex.
\end{theorem}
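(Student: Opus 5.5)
The statement just above is Theorem \ref{thm_rock}, the pointwise stability of convexity, which the paper quotes from \cite[Theorem 10.8]{Roc97}. Here is how I would prove it directly.

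First I would show that the $v_n$ are locally uniformly bounded. Fix a compact $C\subset\Omega$ and choose a polytope $P$ with $C\subset \operatorname{int}P\subset P\subset\Omega$. Since the convergence set $D$ is dense, I can take the vertices of $P$ (after a small perturbation) in $D$. Upper bound: a convex function on $P$ is bounded above by its maximum over the finitely many vertices, and those values converge, so $\sup_n\sup_P v_n<\infty$. Lower bound: fix $x_0\in D\cap \operatorname{int}P$. For $x\in C$, reflect through $x_0$ to get $x'=2x_0-x$. If $P$ is chosen comfortably larger than $C$ and $x_0$ is suitable, then $x'\in P$, and convexity gives
\[
v_n(x)\ \ge\ 2v_n(x_0)-v_n(x') .
\]
This bounds $v_n$ below uniformly on $C$, because $v_n(x_0)$ converges. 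The clean way is to cover $C$ by finitely many small balls around points $x_0\in D$.

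Next, local uniform bounds for convex functions give local uniform Lipschitz bounds. If $|v_n|\le M$ on the $\delta$-neighbourhood of $C$, then the slopes satisfy
\[
|v_n(x)-v_n(y)|\le \frac{2M}{\delta}\,|x-y| \qquad \text{for } x,y\in C .
\]
This is the standard chord–slope argument. Hence $(v_n)$ is equicontinuous on each compact subset.

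With equicontinuity, convergence on the dense set $D$ upgrades to uniform Cauchy convergence on compacts. For $\varepsilon>0$, cover $C$ by finitely many balls of radius $\eta$ centred at points of $D$, with $\eta$ tied to the Lipschitz constant. Combining the Lipschitz estimate with convergence at the finitely many centres gives $\sup_C|v_n-v_m|<\varepsilon$ for $n,m$ large. So $v_n$ converges in $L^\infty_{\rm loc}(\Omega)$ to a continuous function $\tilde v$ with $\tilde v=v$ on $D$. This $\tilde v$ is the extension, and it is unique because any continuous extension is determined by its values on a dense set, which is the meaning of ``uniquely'' here. Convexity of $\tilde v$ follows by passing to the limit in $v_n(tx+(1-t)y)\le tv_n(x)+(1-t)v_n(y)$, since the convergence is now pointwise everywhere.

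The main obstacle is the first step, the uniform lower bound: convexity controls functions from above for free, but controlling them from below needs the reflection trick around points where convergence is known. It also needs care to keep the reflected points inside the region where the upper bound holds. The finiteness of $v$ on $D$ is exactly what makes both bounds work.
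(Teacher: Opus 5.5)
Your proof is correct and is essentially the standard argument behind \cite[Theorem 10.8]{Roc97}, which the paper quotes without proof: uniform bounds on compacts (above via a polytope with vertices in the convergence set, below via reflection through nearby convergence points), equi-Lipschitz estimates from the chord--slope inequality, and the equicontinuity argument that upgrades convergence on a dense set to locally uniform convergence. Two details should be made explicit: the centres of your final covering must lie in a compact set on which the Lipschitz bound holds (apply the first two steps on a slightly larger neighbourhood of $C$), and uniqueness holds because any extension satisfying the conclusion is convex and finite on the open set $\Omega$, hence continuous, and therefore determined by its values on the dense set.
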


Finally, we recall the following concavity result for the $p$-Laplacian, $p>1$, which is
\cite[Theorem 6.7 and Remark 6.8]{GaSq25} (see also \cite[{Theorem} 2.3]{Gla02}).

\begin{theorem}[\cite{GaSq25}]
\label{thm_GS_autonom}
Let $\Omega \subset \R^N$, $N\geq 2$, be open, bounded, convex and with $\partial \Omega \in C^{2,\alpha}$, and
let $\gamma \in (0, 1]$ and $p>1$. Then the solution $u\in W^{1,p}_0(\Omega) \cap C^{1}(\Omega)$ of
\eqref{eq_gen_plaplace_a1} is such that $u^{\frac{p-1+\gamma}{p}}$ is concave.
\end{theorem}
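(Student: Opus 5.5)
Write $q:=\frac{p-1+\gamma}{p}\in(0,1)$ and $v:=u^{q}$, so that $u=v^{1/q}$. I would prove the concavity of $v$ with the concavity maximum principle of Korevaar--Kennington, applied to the transformed equation for $v$ (as in \cite{Ken85,Kaw85} and, for the $p$-Laplacian, \cite{Sak87}).

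\emph{Step 1: the equation for $v$.} Substituting $u=v^{1/q}$ into $-\Delta_p u=u^{-\gamma}$ gives, after a direct computation,
$$-\Delta_p v=q^{p-1}\,v^{\frac{(q-1)(p-1)-\gamma}{q}}+(p-1)\frac{1-q}{q}\,\frac{|\nabla v|^{p}}{v}.$$
The choice of $q$ makes the exponent of the first term equal to $-1$. Indeed, $(q-1)(p-1)-\gamma=-q$ holds if and only if $q(p-1)+q=p-1+\gamma$, i.e.\ $q=\frac{p-1+\gamma}{p}$. The equation therefore becomes
$$-\Delta_p v=\frac{b(v,\nabla v)}{v},\qquad b(s,\xi):=q^{p-1}+(p-1)\tfrac{1-q}{q}|\xi|^p .$$
Thus $-\Delta_p v$ is a homogeneous right-hand side of degree $-1$ in $v$, times a function of the gradient only. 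Kennington's structural condition requires that, for fixed $\xi$, the map $s\mapsto b(s,\xi)/s$ is harmonic concave, i.e.\ $s\mapsto s/b(s,\xi)$ is concave. Here $s/b$ is linear in $s$, so the condition holds with equality. This explains why the exponent $q$ cannot be pushed up: the identity is exactly the borderline one.

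\emph{Step 2: the concavity function and interior points.} Define
$$\mathcal C(x,y,t):=v(tx+(1-t)y)-tv(x)-(1-t)v(y)$$
on $\overline\Omega\times\overline\Omega\times[0,1]$, and suppose its infimum is negative, attained at $(\bar x,\bar y,\bar t)$. If the three points are interior, I use the standard choice: equal gradients at $\bar x$, $\bar y$ and $\bar z=\bar t\bar x+(1-\bar t)\bar y$, together with the second-order comparison of Hessians. Combined with the degenerate ellipticity of the operator and the condition from Step 1, this yields $v(\bar z)\ge\bar t v(\bar x)+(1-\bar t)v(\bar y)$, a contradiction. Because the $p$-Laplacian degenerates where $\nabla v=0$ and $v$ is only $C^{1,\alpha}$, I would run this argument on the regularized problems
$$-\operatorname{div}\big((\varepsilon+|\nabla u|^2)^{\frac{p-2}{2}}\nabla u\big)=(u+\varepsilon)^{-\gamma},$$
whose solutions are smooth by the $C^{2,\alpha}$ boundary. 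The regularization perturbs the structural identity only by $O(\varepsilon)$ errors, which should be handled by an approximate (strict-in-the-limit) form of the inequality. One then passes to the limit $\varepsilon\to0$ using uniform $C^{1,\beta}_{loc}$ estimates and uniform convergence.

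\emph{Step 3: boundary points (expected main obstacle).} One must exclude that the negative minimum is attained with $\bar x$ or $\bar y$ on $\partial\Omega$. Here I would use $v=0$ on $\partial\Omega$ together with the infinite normal derivative of $v$ there. For $\gamma<1$ one has $u\sim d_\Omega$ (see \cite{GST07}), so $v\sim d_\Omega^{q}$ with $q<1$. For $\gamma=1$ one has $u\sim d_\Omega(-\log d_\Omega)^{1/p}$, so again $\partial_\nu v=+\infty$. By Kennington's lemma \cite{Ken85}, on a strictly convex smooth domain this blow-up forces $\mathcal C$ to decrease when one moves inward from a boundary configuration, so no negative minimum can sit on the boundary. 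The delicate part is doing this uniformly in the regularization $\varepsilon$, since the regularized solutions have only finite gradients at $\partial\Omega$. I would first try to prove that $\partial_\nu v_\varepsilon\to+\infty$ uniformly on $\partial\Omega$ as $\varepsilon\to0$, by comparison with explicit barriers built from $d_\Omega$, using $\partial\Omega\in C^{2,\alpha}$. Failing that, I would apply the concavity argument on the sets $\{v_\varepsilon>\delta\}$ and send $\delta\to0$, which is where convexity and $C^{2,\alpha}$ regularity of $\partial\Omega$ are genuinely used. Finally, Theorem \ref{thm_rock} transfers concavity of $v_\varepsilon$ to the limit $v=u^{q}$.
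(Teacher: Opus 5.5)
Your overall strategy is the right one: a concavity maximum principle for a transformed function, applied to regularized problems, then a limit via Theorem \ref{thm_rock}. Step 1 is also correct. The argument breaks in Step 2, and first there is a sign slip that matters there. At an interior minimum of $\mathcal C$, the optimal second-order variations give $\frac{1}{B(\bar z)}\ge\frac{\bar t}{B(\bar x)}+\frac{1-\bar t}{B(\bar y)}$ for the right-hand side $B$. This contradicts $\mathcal C<0$ only if $1/B$ is \emph{convex} and increasing in $v$, so the requirement is that $s\mapsto s/b$ be convex (Definition \ref{def_theta_conc}), not concave.

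With this corrected, the regularizations do not perturb the identity of Step 1 by $O(\varepsilon)$. With the fixed power $v=u^q$ and the nonlinearity $(u+\varepsilon)^{-\gamma}$, the right-hand side of the $v$-equation is
\[
\frac1v\Big[q^{p-1}\Big(\frac{u}{u+\varepsilon}\Big)^{\gamma}+(p-1)\frac{1-q}{q}|\nabla v|^p\Big],\qquad u=v^{1/q}.
\]
For $u\ll\varepsilon$ this behaves like $(K+\beta v^{\sigma})/v$, with $\sigma=\gamma/q\in(0,1)$ when $\gamma<1$, $\beta=q^{p-1}\varepsilon^{-\gamma}$ and $K\ge0$. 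Since $\frac{d^2}{dv^2}\frac{v}{K+\beta v^\sigma}<0$, Kennington's condition fails with the wrong sign in a boundary collar. There is also no slack to absorb any error, because even in the limit the condition holds only with equality.

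The regularized operator causes similar trouble. After substituting $u=v^{1/q}$, its normalized coefficients $I+(p-2)\frac{|\nabla u|^2}{\varepsilon+|\nabla u|^2}\hat\xi\otimes\hat\xi$ depend on $v$ through $\nabla u=\frac1q v^{\frac1q-1}\xi$, so the three points no longer share one matrix. Moreover, where $|\nabla u_\varepsilon|^2\ll\varepsilon$ the operator is essentially $\varepsilon^{\frac{p-2}{2}}\Delta$. Its natural exponent $\frac{1+\gamma}{2}$ is strictly smaller than $q$ when $p>2$ and $\gamma<1$, so the transformed right-hand side is not harmonic concave in $v$ there either.

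Step 3 fails for $\gamma=1$. Then $q=1$ and $v_\varepsilon=u_\varepsilon$ solves a non-degenerate problem with bounded right-hand side. By Hopf's lemma its normal derivative is finite for every $\varepsilon>0$, so the boundary lemma of \cite{Ken85} is unavailable for every approximant (for $\gamma<1$ it does apply, since $q<1$). Uniform blow-up of $\partial_\nu v_\varepsilon$ as $\varepsilon\to0$ does not make any single $v_\varepsilon$ concave. Restricting to $\{v_\varepsilon>\delta\}$ is circular: convexity of these sets is part of what must be proved, and $v_\varepsilon-\delta$ again has finite normal derivative there.

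The missing idea, which is how the statement is obtained in \cite{GaSq25} (see Remarks \ref{rem_which_transformation} and \ref{rem_stronger_result}), is to let the transformation depend on the regularization. Take $g_\eta(t)=(t+\eta)^{-\gamma}$, $G_\eta(t)=\int_0^t g_\eta$ and $\varphi_\eta(t)=\int_1^t G_\eta^{-1/p}$. Then $v_\eta=\varphi_\eta(u_\eta)$ solves $-\Delta_p v_\eta=\mathcal Q(v_\eta)\big(p+(p-1)|\nabla v_\eta|^p\big)$, with $\mathcal Q=\psi''/\psi'$ as in \eqref{eq_kappa_formula}. This buys three things:
\begin{itemize}
\item The structural condition holds \emph{exactly} for each $\eta$: it reduces to $\mathcal Q$ non-increasing and harmonic concave, which follows from $G_\eta^{1/p}$ concave and $g_\eta/G_\eta$ harmonic concave.
\item $\varphi_\eta'(0^+)=G_\eta(0^+)^{-1/p}=+\infty$ supplies the infinite normal derivative for every $\eta$, including $\gamma=1$.
\item $\varphi_\eta\to c\,t^{\frac{p-1+\gamma}{p}}+c'$ locally uniformly on $(0,\infty)$, or $(-\log\eta)^{1/p}\varphi_\eta(t)\to t+\mathrm{const}$ if $\gamma=1$, so Theorem \ref{thm_rock} transfers concavity to $u^{\frac{p-1+\gamma}{p}}$.
\end{itemize}
The power $u^q$ should therefore only be reached in the limit $\eta\to0^+$, not imposed on the regularized problems.
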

Such a result will be improved below, in Theorem \ref{thm_GS_autonom_imp}.

{
\begin{remark}\label{rem_which_transformation}
Let us recall where the exponent $\frac{p-1+\gamma}{p}$ comes from, since this will be used in the proofs. The
general machinery of \cite[Section 5.1]{GaSq25} associates to a nonlinearity $g$ {of $-\Delta_p u = g(u)$} the transformation
\begin{equation}\label{eq_def_varphi}
G(t):=\int_0^t g(s)\,ds, \qquad \varphi(t):=\int_1^t \big(G(s)\big)^{-\frac1p}\,ds,
\end{equation}
(for future reference, we also set $\psi:=\varphi^{-1}$, which will be used in Section
\ref{sec_nonautonomous}),
and \cite[Corollary 5.6]{GaSq25} gives, under suitable structural {and regularity} assumptions, the concavity of
$\varphi(u)$.

In our case, $g_0(t)=t^{-\gamma}$. When $\gamma \in (0,1)$ the above expressions make sense and we have $G_0(t) = \hbox{const} \cdot t^{-\gamma+1}$ and $\varphi_0(t) = \hbox{const} \cdot t^{\frac{\gamma-1+p}{p}} + \hbox{const}$, which is equivalent to our transformation from a concavity point of view. Due to regularity reasons, one needs to deal first with a regularized problem, given by $g_{\eta}(t):=(t+\eta)^{-\gamma}$, $\eta>0$, so that $G_{\eta}(t) \to G_0(t)$ and $\varphi_{\eta}(t) \to \varphi_0(t)$ locally uniformly on $(0,\infty)$. Thus the concavity of $\varphi_{\eta}(u_{\eta})$ passes to the limit as
$\eta \to 0^+$ by Theorem \ref{thm_rock}, provided $u_{\eta}\to u$ a.e.\ with $u>0$.

{When $\gamma=1$ the expressions in \eqref{eq_def_varphi} are not well defined: here again we approximate the problem through $g_{\eta}$ and suitable corrections, observing that $G_{\eta}(t) + \log(\eta) \to \log(t)$ (the primitive of $g(t)=t^{-1}$) and $(-\log(\eta))^{1/p} \varphi_{\eta}(t) \to t + \hbox{const}$ locally uniformly on $(0,+\infty)$, which again allows to transfer concavity properties from $u_{\eta}$ to $u$.}

{Let us observe also that, {denoting by $u_{p,\Omega}$ the solution of \eqref{eq_gen_plaplace_a1} in $\Omega$,} $\theta=\frac{p-1+\gamma}{p}$ appears as the exponent which makes the map $s \mapsto (u_{p,s\Omega})^{\theta}(s\cdot)$ homogeneous of degree one (thus {natural} from the concavity point of view), being %
$(u_{p,s\Omega})^{\frac{p-1+\gamma}{p}}(sx) = s (u_{p,\Omega})^{\frac{p-1+\gamma}{p}}(x)$.
}
\end{remark}}

\section{An approximation result: concavity for $p>1$}
\label{sec_approx}

{In this section we prove an approximation result which allows to remove all regularity assumptions,
both on the domain and on the datum, from concavity statements for $p$-Laplace problems. The argument is
inspired by \cite[Section 5]{Sak87}, \cite[Section 4.1]{BMS24} and \cite[proof of Theorem 5.2]{GaSq25}. It will be used
for the autonomous singular problem \eqref{eq_gen_plaplace_a1} (Theorem \ref{thm_GS_autonom_imp}), which is
what we need for the main results, as well as for the weighted torsion problem (Theorem \ref{thm_GS_torsion_imp}) and the weighted singular problem (Theorem \ref{thm_conc_plapl_tors}).}

{
We recall that, given $\Omega \subset \R^N$ {open}, bounded and convex, there always exist \cite[Proposition 2.1]{GaSq25}
\begin{equation}\label{eq_OmegaktoOmega}
\hbox{$\Omega^{k}\subset \Omega^{k+1} \subset \Omega$ smooth, strongly convex sets converging to $\Omega$ in
the Hausdorff distance}.
\end{equation}

\begin{lemma} %
\label{lem_conv_minim}
Let $\Omega \subset \R^N$ be open, bounded and convex, let $p>1$, and let
$f:\Omega \times \R\to \R$ be a Carath\'eodory function, and set
$F(x,t):=\int_0^t f(x,s)\,ds$. Let
$$J(v):= \frac1p \int_{\Omega}|\nabla v|^p - \int_{\Omega} F(x,v), \qquad v \in W^{1,p}_0(\Omega).$$
Let $\Omega^k$ be as in \eqref{eq_OmegaktoOmega},
and let
$f_k:\Omega^k\times\R\to\R$ be Carath\'eodory functions such that
\begin{equation}\label{eq_hyp_f_approx}
f_k(x,\cdot) \hbox{ is non-increasing for a.e.\ $x \in \Omega^k$}, \qquad
\sup_k\|f_k\|_{L^\infty(\Omega^k\times\R)}<+\infty.
\end{equation}
Assume moreover that
\begin{equation}\label{eq_conv_f_approx}
\left\|\sup_{t\in\R}|f_k(\cdot,t)-f(\cdot,t)|\right\|_{L^{p'}(\Omega^k)}
\longrightarrow0.
\end{equation}
Set $F_k(x,t):=\int_0^t f_k(x,s)\,ds$ and let
$$J_k(v):=\frac1p\int_{\Omega^k}|\nabla v|^p-\int_{\Omega^k}F_k(x,v),
\qquad v\in W^{1,p}_0(\Omega^k).$$
Then $J$ admits a unique minimum $u$ on $W^{1,p}_0(\Omega)$, and $J_k$ admits a unique minimum $u_k$
on $W^{1,p}_0(\Omega^k)$ for every $k$. Extend $u_k$ by $0$ on $\Omega\setminus\Omega^k$.
Then $u_k \to %
 u$ in $W^{1,p}_0(\Omega)$.
\end{lemma}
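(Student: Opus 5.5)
The plan is the direct method of the calculus of variations combined with a $\Gamma$-convergence-type argument, using the exhaustion $\Omega^k\nearrow\Omega$ from \eqref{eq_OmegaktoOmega}. The first step is to show that the limit datum $f$ inherits the structural properties in \eqref{eq_hyp_f_approx}. Set $h_k(x):=\sup_{t}|f_k(x,t)-f(x,t)|$ on $\Omega^k$ and $C:=\sup_k\|f_k\|_{L^\infty}$. By \eqref{eq_conv_f_approx}, $\|h_k\|_{L^{p'}(\Omega^k)}\to0$. Since the $\Omega^k$ increase and cover $\Omega$, we can extract a subsequence along which $h_k(x)\to0$ for a.e.\ $x\in\Omega$. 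Hence, for a.e.\ $x$ and every $t$, $f_k(x,t)\to f(x,t)$. We conclude that $|f(x,t)|\le C$ and that $f(x,\cdot)$ is non-increasing. Consequently $F(x,\cdot)$ and $F_k(x,\cdot)$ are concave and globally $C$-Lipschitz, with $|F|,|F_k|\le C|t|$.

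The second step is existence and uniqueness of the minimizers. The functional $J$ is the sum of the strictly convex term $\frac1p\int|\nabla v|^p$ and the convex term $-\int F(x,v)$, so $J$ is strictly convex. It is coercive, since $J(v)\ge \frac1p\|\nabla v\|_p^p - C|\Omega|^{1/p'}\|v\|_p$ and Poincaré's inequality controls $\|v\|_p$. It is weakly lower semicontinuous on $W^{1,p}_0(\Omega)$, because the gradient term is convex and the $F$-term is continuous under strong $L^p$ convergence, which follows from Rellich. Hence $J$ has a unique minimizer $u$, and the same argument gives a unique minimizer $u_k$ of $J_k$.

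The third step proves the convergence. Extend $u_k$ by $0$, so that $u_k\in W^{1,p}_0(\Omega)$. From $J_k(u_k)\le J_k(0)=0$ and Poincaré on $\Omega$ (uniform in $k$), $(u_k)$ is bounded in $W^{1,p}_0(\Omega)$. Since $F(x,0)=0$ on $\Omega\setminus\Omega^k$,
\[
|J(u_k)-J_k(u_k)|\le \int_{\Omega^k}|F(x,u_k)-F_k(x,u_k)|\le \int_{\Omega^k} h_k|u_k|\le \|h_k\|_{L^{p'}(\Omega^k)}\|u_k\|_{L^p}\to0 .
\]
For the upper bound, take $\phi_j\in C^\infty_c(\Omega)$ with $\phi_j\to u$ in $W^{1,p}$. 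For $k$ large we have $\operatorname{supp}\phi_j\subset\Omega^k$, so $J_k(u_k)\le J_k(\phi_j)$, and by the same estimate $J_k(\phi_j)\to J(\phi_j)$. Since $J$ is continuous in $W^{1,p}_0$ (the integrand $F$ is Lipschitz in $t$), we get $\limsup_k J(u_k)\le J(u)$. Now let $w$ be any weak limit point of $(u_k)$. Lower semicontinuity gives $J(w)\le J(u)$, so $w=u$ by uniqueness, and the whole sequence converges weakly to $u$.

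Finally, strong convergence. We have $u_k\to u$ in $L^p$, so $\int F(x,u_k)\to\int F(x,u)$ by the Lipschitz bound. Combined with $J(u_k)\to J(u)$ (the liminf and limsup bounds together), this yields $\|\nabla u_k\|_p\to\|\nabla u\|_p$. Uniform convexity of $L^p$ with $p>1$ then upgrades weak convergence to strong convergence in $W^{1,p}_0(\Omega)$. I expect the only delicate point to be the first step: \eqref{eq_conv_f_approx} only gives convergence on the moving sets $\Omega^k$, so the a.e.\ bound and monotonicity of $f$ must be extracted through a diagonal/subsequence argument on the exhaustion, which then justifies all the uniform Lipschitz bounds used above.
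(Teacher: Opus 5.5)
Your proof is correct, and its skeleton matches the paper's: boundedness and monotonicity of $f$ from an a.e.\ convergent subsequence of $\sup_t|f_k-f|$, the direct method with strict convexity, a liminf/limsup argument, and the upgrade from weak to strong convergence via $\|\nabla u_k\|_p\to\|\nabla u\|_p$.

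The genuine difference is the recovery sequence for the upper bound $\limsup_k J_k(u_k)\le J(u)$. The paper uses the explicit truncations $v_k=(|u|-\eps_k)_+\sgn(u)$ with $\eps_k=2\sup_{\overline\Omega\setminus\Omega^k}|u|$. For $\eps_k\to0$ this requires $u\in C(\overline{\Omega})$ with $u=0$ on $\partial\Omega$. To get that, the paper invokes $L^\infty$ bounds, interior continuity, and a Wiener-type boundary criterion based on the exterior cone condition of convex sets.

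You instead approximate $u$ by $\phi_j\in C^\infty_c(\Omega)$ and note that $\operatorname{supp}\phi_j\subset\Omega^k$ for $k$ large. You then let first $k\to\infty$ and then $j\to\infty$, using only the continuity of $J$ on $W^{1,p}_0(\Omega)$. This is more elementary and more general: it needs no regularity of $u$ at all, and uses convexity only through the exhaustion property of the $\Omega^k$.

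What the paper's route buys is the continuity of $u$ in $\Omega$ as a by-product. That fact is used afterwards in Proposition \ref{prop_approx_general} to identify $\varphi(u)$ everywhere with its concave representative. With your proof it would have to be obtained separately, although standard interior regularity suffices.

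Your uniform bound via $J_k(u_k)\le J_k(0)=0$ is also a slight simplification of the paper's comparison with $u_1$.
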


\begin{proof}
Up to extending
$$d_k(x):=\sup_{t\in\R}|f_k(x,t)-f(x,t)|$$
by $0$ on $\Omega\setminus\Omega^k$, by \eqref{eq_conv_f_approx} we have $d_k\to0$ in
$L^{p'}(\Omega)$, and we can assume $d_k\to0$ a.e.\ in $\Omega$; %
by \eqref{eq_hyp_f_approx} we deduce that
$f(x,\cdot)$ is non-increasing for a.e.\ $x\in\Omega$ and for some $C>0$
\begin{equation}\label{eq_bounds_ffkFFk}
|f(x,t)|\leq C, \qquad |f_k(x,t)|\leq C, \qquad |F(x,t)|\leq C|t|, \qquad |F_k(x,t)|\leq C|t|
\end{equation}
for a.e.\ $x$ and every $t\in\R$, and
\begin{equation}\label{eq_bounds_F-Fk}
|F_k(x,t)-F(x,t)|\leq d_k(x)|t|, \qquad |F(x,t)-F(x,s)|\leq C|t-s|
\end{equation}
for a.e.\ $x\in\Omega^k$ and every $t,s \in\R$.

{By \eqref{eq_bounds_ffkFFk} together with H\"older's and Poincar\'e's inequalities, $J$ and $J_k$ are
coercive; being also weakly lower semicontinuous, %
{we obtain} %
minima; these are unique, since $v \mapsto \frac1p\int|\nabla v|^p$ is strictly convex while
$-F(x,\cdot)$ and $-F_k(x,\cdot)$ are convex, $f$ and $f_k$ being non-increasing in $t$.} Denote by $u$ and
$u_k$ the minima of $J$ and $J_k$ respectively. Moreover $u$ is a weak solution of $-\Delta_p u=f(x,u)$ in
$\Omega$ with $u=0$ on $\partial \Omega$, with $f$ bounded by \eqref{eq_bounds_ffkFFk}; hence
$u \in L^{\infty}(\Omega)$ by \cite[Theorem 2.1, Section 5.2]{LaUr68}, {$u\in C(\Omega)$ by standard regularity theory \cite{Tru67} and, $\Omega$ being convex (hence satisfying an exterior cone condition at every boundary point), $u\in C(\overline{\Omega})$ by the Wiener-type criterion of \cite{GaZi77}}.

\emph{Step 1: uniform bounds.} Since $\Omega^{1}\subseteq \Omega^{k}$ for every $k$, the function $u_1$ is
admissible for $J_k$, hence $J_k(u_k)\leq J_k(u_1)$. By \eqref{eq_bounds_ffkFFk} we obtain
$$\frac1p\int_\Omega|\nabla u_k|^p-C\int_\Omega|u_k| \leq J_k(u_k) \leq J_k(u_1)\leq \frac1p\int_\Omega|\nabla u_1|^p+C\int_\Omega|u_1|\leq C,$$
which, combined with H\"older's and Poincar\'e's inequalities applied to $\norm{u_k}_1$, implies that
 $(u_k)_k$ is bounded in $W^{1,p}_0(\Omega)$. Thus up to a %
subsequence,
$u_k \wto \bar u$ in $W^{1,p}_0(\Omega)$, $u_k \to \bar u$ in $L^p(\Omega)$ and a.e.\ in $\Omega$.

\emph{Step 2: $\bar u$ is a minimum.} Let
$$\eps_k:=2\sup_{\overline{\Omega}\setminus\Omega^{k}}|u|$$
and define
$$v_k:=(|u|-\eps_k)_+\sgn(u);$$
clearly $v_k\in W^{1,p}_0(\Omega^k)$ with $\nabla v_k=\nabla u\,\chi_{\{|u|>\eps_k\}}$.
Since $u\in C(\overline{\Omega})\cap W^{1,p}_0(\Omega)$, we have $u=0$ on $\partial\Omega$. Moreover,
being $\Omega^k\nearrow\Omega$ %
in the Hausdorff distance, one has
$\eps_k\to0$.
Thus
$$\|\nabla v_k-\nabla u\|_{L^p(\Omega)}^p
=\int_{\{0<|u|\leq\eps_k\}}|\nabla u|^p\longrightarrow0$$
by dominated convergence, since $\chi_{\{0<|u|\leq\eps_k\}}\to0$ a.e.\ in $\Omega$ and
$\nabla u=0$ a.e.\ on $\{u=0\}$. %
Hence
$v_k\to u$ in $W^{1,p}_0(\Omega)$.
Therefore, by \eqref{eq_bounds_F-Fk}
\begin{align*}
\left|\int_{\Omega^k}F_k(x,v_k)-\int_\Omega F(x,u)\right|
\leq \|d_k\|_{L^{p'}(\Omega)}\|v_k\|_{L^p(\Omega)}
+C\|v_k-u\|_{L^1(\Omega)}
\longrightarrow0.
\end{align*}
Similarly, since $u_k\to\bar u$ in $L^p(\Omega)$,
\begin{align*}
\left|\int_{\Omega^k}F_k(x,u_k)-\int_\Omega F(x,\bar u)\right|
&\leq \|d_k\|_{L^{p'}(\Omega)}\|u_k\|_{L^p(\Omega)}
+C\|u_k-\bar u\|_{L^1(\Omega)}
\longrightarrow0.
\end{align*}
Hence, by lower semicontinuity and by the minimality of $u_k$ in $\Omega^k$,
$$J(\bar{u}) \leq \liminf_{k} J_k(u_k) \leq \limsup_k J_k(u_k)
\leq \limsup_k J_k(v_k) = J(u),$$
which implies, by the uniqueness of the minimum, that $\bar{u}=u$.

\emph{Step 3: strong convergence.} From the previous step
$$J_k(u_k)\longrightarrow J(u) \quad \hbox{ and } \quad \int_{\Omega^k}F_k(x,u_k)\longrightarrow\int_\Omega F(x,u),$$
thus $\norm{\nabla u_k}_p \to \norm{\nabla u}_p$, which implies the strong convergence. Showing that the whole sequence $u_k$ converges to $u$ is standard.
\end{proof}

\begin{proposition}[Stability of concavity under approximation]
\label{prop_approx_general}
In the assumptions of Lemma \ref{lem_conv_minim}, let moreover
$\varphi_k,\varphi:I\to\R$ be continuous and increasing, with $u_k(\Omega^k), u(\Omega) \subset I \subset \R$ and
$\varphi_k\to\varphi$ locally uniformly in $I$. Assume that
$\varphi_k(u_k)$ is concave in $\Omega^k$ for every $k$. Then
$\varphi(u)$ is concave in $\Omega$.
\end{proposition}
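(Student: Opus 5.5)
The plan is to pass concavity to the limit through Lemma \ref{lem_conv_minim} and Theorem \ref{thm_rock}. By Lemma \ref{lem_conv_minim}, $u_k\to u$ in $W^{1,p}_0(\Omega)$, with $u_k$ extended by $0$ outside $\Omega^k$. Passing to a subsequence (which suffices, since the target statement concerns only the limit $u$), $u_k\to u$ a.e.\ in $\Omega$. Let $E\subset\Omega$ be the full-measure set where this pointwise convergence holds; $E$ is in particular dense in $\Omega$.

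Fix $x\in E$. Since $\Omega^k\nearrow\Omega$ in Hausdorff distance and the sets are convex and increasing, $x\in\Omega^k$ for all $k$ large. Then $u_k(x)\to u(x)\in I$. Local uniform convergence of $\varphi_k$ to $\varphi$ and continuity of $\varphi$ give $\varphi_k(u_k(x))\to\varphi(u(x))$. The one point to check is that the values $u_k(x)$, together with $u(x)$, eventually lie in a compact subset of $I$. This holds if $u(x)$ is in the interior of $I$. If $u(x)$ is an endpoint of $I$ belonging to $I$, the values still lie in $I\cap[u(x)-\delta,u(x)+\delta]$, which is compact in $I$. So in either case $\varphi_k(u_k(x))\to\varphi(u(x))$.

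Next, fix any open convex $\Omega'\Subset\Omega$. For $k$ large, $\Omega'\subset\Omega^k$, so $v_k:=-\varphi_k(u_k)$ is a finite convex function on $\Omega'$ (finite because $u_k(\Omega^k)\subset I$). It converges pointwise on the dense set $E\cap\Omega'$ to the finite function $-\varphi(u)$. Theorem \ref{thm_rock} then yields a unique convex extension $w$ on $\Omega'$ with $v_k\to w$ locally uniformly, and $w=-\varphi(u)$ on $E\cap\Omega'$. Exhausting $\Omega$ by such $\Omega'$ and using uniqueness of the extensions, we get a concave function $-w$ on $\Omega$ that coincides with $\varphi(u)$ a.e.

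The main obstacle is this final identification: upgrading ``a.e.'' to ``everywhere'', i.e.\ showing $\varphi(u)=-w$ at every point and not just on $E$. Here I would use that $u\in C(\Omega)$, which is established in the proof of Lemma \ref{lem_conv_minim}. Since $\varphi$ is continuous, $\varphi(u)$ is continuous, and $-w$ is continuous because it is concave and finite on an open set. Two continuous functions that agree on a dense set agree everywhere, so $\varphi(u)=-w$ on all of $\Omega$. Hence $\varphi(u)$ is concave in $\Omega$.
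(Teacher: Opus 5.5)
Your proposal is correct and follows the paper's proof essentially step by step. Both arguments take a.e.\ convergence from Lemma~\ref{lem_conv_minim} (along a subsequence), apply Theorem~\ref{thm_rock} to $-\varphi_k(u_k)$ on open convex sets $\Omega'\Subset\Omega$ with a countable exhaustion, and then use continuity of $u$ and $\varphi$ to identify $\varphi(u)$ everywhere with the concave limit. Your check that the values $u_k(x)$ eventually lie in a compact subset of $I$, including the endpoint case, makes explicit a step the paper leaves implicit.
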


\begin{proof}
Fix $\Omega' \Subset \Omega$ open and convex. Since
$\Omega^{k}\nearrow \Omega$ in the Hausdorff distance and $\Omega'$ is compactly contained in $\Omega$, we have
$\Omega' \subset \Omega^{k}$ for $k$ large. On $\Omega'$ the functions $\varphi_k(u_k)$ are concave and, by
Lemma \ref{lem_conv_minim} and the local uniform convergence of $\varphi_k$ to $\varphi$, they converge
a.e.\ to $\varphi(u)$, hence $\varphi(u)$ is concave on $\Omega'$ by Theorem \ref{thm_rock}, applied to
$-\varphi_k(u_k)$. Taking a countable exhaustion $\Omega'_j \nearrow \Omega$, we conclude that $\varphi(u)$ is
concave in $\Omega$. {Since $u$ and $\varphi$ are continuous in $\Omega$, $\varphi(u)$ coincides everywhere with the concave function given by Theorem \ref{thm_rock}.}
\end{proof}
}

{We can now remove the regularity assumption on $\partial \Omega$ in Theorem \ref{thm_GS_autonom}. This is
the technical core of the paper.}

\begin{theorem}
\label{thm_GS_autonom_imp}
Let $\Omega \subset \R^N$, $N\geq 2$, be open, bounded and convex, and let $\gamma \in (0, 1]$ and $p>1$. Then
the solution $u\in W^{1,p}_0(\Omega)\cap C(\Omega)$ of \eqref{eq_gen_plaplace_a1} is such that
$u^{\frac{p-1+\gamma}{p}}$ is concave.
\end{theorem}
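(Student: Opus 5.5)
The plan is to approximate both the domain and the nonlinearity, apply Theorem \ref{thm_GS_autonom} in the smooth setting, and pass to the limit through Proposition \ref{prop_approx_general}. Since $t\mapsto t^{-\gamma}$ is singular at $0$ and unbounded, it cannot be used directly as the datum $f$ of Lemma \ref{lem_conv_minim}. I would therefore proceed in two stages: first $\eta>0$ fixed with a bounded regularized datum, then $\eta\to0^+$.

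\emph{Stage 1 ($\eta>0$ fixed).} Set $g_\eta(t):=(t^++\eta)^{-\gamma}$. This is non-increasing, bounded by $\eta^{-\gamma}$, and Carath\'eodory (indeed autonomous and continuous). Take $\Omega^k$ as in \eqref{eq_OmegaktoOmega} and set $f_k=f=g_\eta$, so that \eqref{eq_hyp_f_approx} and \eqref{eq_conv_f_approx} hold trivially. Lemma \ref{lem_conv_minim} then gives unique minimizers $u_{\eta,k}$ on $\Omega^k$ and $u_\eta$ on $\Omega$, with $u_{\eta,k}\to u_\eta$ in $W^{1,p}_0(\Omega)$. By the weak maximum principle these are nonnegative, since the datum is positive, and they are positive inside by the strong maximum principle. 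On the smooth strongly convex $\Omega^k$ I invoke the regularized version of \cite[Corollary 5.6]{GaSq25}, which underlies Theorem \ref{thm_GS_autonom} as recalled in Remark \ref{rem_which_transformation}: $\varphi_\eta(u_{\eta,k})$ is concave, with $\varphi_\eta$ defined by \eqref{eq_def_varphi} for $g_\eta$. Proposition \ref{prop_approx_general}, applied with $\varphi_k=\varphi=\varphi_\eta$ on $I=[0,\infty)$, then yields concavity of $\varphi_\eta(u_\eta)$ on $\Omega$. Some care is needed with $\varphi_\eta$ near $0$: the integrand $G_\eta(s)^{-1/p}\sim (s\eta^{-\gamma})^{-1/p}$ is integrable, so $\varphi_\eta$ is continuous and increasing on $[0,\infty)$.

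\emph{Stage 2 ($\eta\to0^+$).} This is the step I expect to be the main obstacle, because the nonlinearity is no longer bounded and Lemma \ref{lem_conv_minim} does not apply. I would argue by monotonicity and comparison. The map $\eta\mapsto u_\eta$ is non-increasing by the comparison principle for $-\Delta_p v=g(v)$ with $g$ non-increasing. The family is also bounded above uniformly in $L^\infty$ by comparison with a supersolution; alternatively one can use the known a priori estimates for singular problems, namely $u_\eta\le u$ with $u$ the solution of \eqref{eq_gen_plaplace_a1} from \cite{CST16} or \cite{DGS19}. Hence $u_\eta\nearrow \tilde u$ pointwise. Uniform $W^{1,p}_{\rm loc}$ bounds follow by testing with $u_\eta\phi^p$, using that $u_\eta\ge u_1>0$ locally and so $g_\eta(u_\eta)$ is locally bounded. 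Passing to the limit in the weak formulation, and using the $W^{1,p}_0$ bound for $\gamma\le1$ obtained by testing with $u_\eta$ (which gives $\int |\nabla u_\eta|^p\le\int u_\eta^{1-\gamma}$), identifies $\tilde u$ as the unique solution $u\in W^{1,p}_0(\Omega)$ of \eqref{eq_gen_plaplace_a1}.

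Finally, as in Remark \ref{rem_which_transformation}, $\varphi_\eta\to c\,t^{\frac{p-1+\gamma}{p}}+c'$ locally uniformly on $(0,\infty)$ when $\gamma<1$. When $\gamma=1$, the rescaled functions $(-\log\eta)^{1/p}\varphi_\eta$ converge to an affine function, after the normalization $G_\eta+\log\eta$ indicated there. I would need to check that this limit is the affine-in-$t^{p/p}$-type expression matching the exponent $\frac{p-1+1}{p}=1$; I have not verified this. Since $u>0$ in $\Omega$, Theorem \ref{thm_rock}, applied to $-\varphi_\eta(u_\eta)$ suitably normalized on the open convex set $\Omega$, gives concavity of the limit. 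Continuity of $u$ in $\Omega$ then lets us identify this limit everywhere with $u^{\frac{p-1+\gamma}{p}}$, up to positive affine change, which proves the claim.
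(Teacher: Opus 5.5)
Your proposal is correct and follows the paper's proof essentially step by step. You regularize with $g_\eta(t)=(t^++\eta)^{-\gamma}$, obtain concavity of $\varphi_\eta(u_\eta)$ on the smooth strongly convex $\Omega^k$ from \cite[Corollary 5.6]{GaSq25}, transfer it to $\Omega$ via Proposition \ref{prop_approx_general} with $f_k\equiv g_\eta$ and $\varphi_k\equiv\varphi_\eta$, and then let $\eta\to0^+$ through Theorem \ref{thm_rock}; your monotone comparison argument only makes explicit the convergence $u_\eta\to u$ that the paper takes from \cite{DeC13,DuOl22}. The $\gamma=1$ step you left unverified does hold: $G_\eta(t)=\log\frac{t+\eta}{\eta}$, so $G_\eta/(-\log\eta)\to1$ locally uniformly on $(0,\infty)$ and $(-\log\eta)^{1/p}\varphi_\eta(t)\to t-1$, which corresponds exactly to the exponent $\frac{p-1+1}{p}=1$.
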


\begin{proof}
We start by observing that a solution in $W^{1,p}_0(\Omega)$ exists \cite[Theorem 7.1]{OlPe25}, it is unique
\cite[Corollary 2.3]{DuOl22} (see also \cite{CST16}), and moreover this solution is in $L^{\infty}(\Omega)$
\cite[Theorems 3.1, 4.1, 4.4]{DeC13}; {moreover $u\in C(\Omega)$ by standard local regularity, since $u$ is locally bounded away from zero \cite{DeC13}}. %
Such solutions can be obtained as limit, as $\eta\to0^+$, of the solutions
$u_{\eta}$ of
\begin{equation}\label{eq_gen_smooth_plaplace}
\begin{cases}
-\Delta_p u_\eta = (u_\eta+\eta)^{-\gamma} & \quad \hbox{in $\Omega$}, \\
u_\eta = 0 &\quad \hbox{on $\partial \Omega$},
\end{cases}
\end{equation}
which are unique as well \cite{DeC13,DuOl22}.

{Fix $\eta>0$ and set $g_{\eta}(t):=(t^{+}+\eta)^{-\gamma}$ (here the
truncation $t^{+}$ is used to make functionals well defined in $W^{1,p}_0(\Omega)$, not only in the positive
cone), which is bounded and non-increasing;
let $\varphi_{\eta}$ be as in Remark \ref{rem_which_transformation}. On a smooth, strongly convex domain,
\cite[Corollary 5.6]{GaSq25} (applied as in \cite[Theorem 6.7 and Remark 6.8]{GaSq25}) gives that
$\varphi_{\eta}(u_{\eta})$ is concave.
The datum $g_{\eta}$ satisfies \eqref{eq_hyp_f_approx}, and it does not depend on
$x$, so that we may take $f_k \equiv g_{\eta}$ in Proposition \ref{prop_approx_general}, after having fixed
$\Omega^k$ as in \eqref{eq_OmegaktoOmega}. Proposition
\ref{prop_approx_general}, applied with $\varphi_k \equiv \varphi_{\eta}$, then gives that $\varphi_{\eta}(u_{\eta})$
is concave on \emph{every} bounded convex $\Omega$.}

Finally we let $\eta \to 0^+$. We have $u_{\eta}\to u$ a.e.\ in $\Omega$ and $u>0$ in $\Omega$; being
$\varphi_{\eta}(u_{\eta})$ concave, we can argue as in Remark \ref{rem_which_transformation} and conclude.
\end{proof}

\section{The limit $p \to 1^{+}$: proof of the main results}
\label{sec_limit}

{We now prove the main results.}

\begin{proof}[Proof of Theorem \ref{thm_main_ucgamma}]
{By Theorem \ref{thm_exist_decicco_a1} the solutions $u_p$ of \eqref{eq_gen_plaplace_a1} converge to $u$
a.e.\ in $\Omega$ as $p\to1^{+}$, and $u>0$ a.e.\ in $\Omega$. By Theorem \ref{thm_GS_autonom_imp} each
$u_p^{\frac{p-1+\gamma}{p}}$ is concave. Since $\frac{p-1+\gamma}{p}\to \gamma$ as $p\to1^{+}$ and
$u_p \to u$ a.e.\ with $u$ positive and finite, we get $u_p^{\frac{p-1+\gamma}{p}} \to u^{\gamma}$ a.e.\ in
$\Omega$; hence $u^{\gamma}$ is concave by Theorem \ref{thm_rock}, applied to the convex functions
$-u_p^{\frac{p-1+\gamma}{p}}$.}

{Being concave and positive, $u^{\gamma}$ is locally Lipschitz in $\Omega$. Moreover, since
$\gamma \in (0,1]$, the map $s \mapsto s^{1/\gamma}$ is locally Lipschitz
so that} $u \in Lip_{loc}(\Omega)$.
\end{proof}

\begin{proof}[Proof of Corollary \ref{cor_var_mean_curv}]
{{By \eqref{eq_sol_eq_Hom1} we have $u^{\gamma}=(-H_{\Omega})^{-1}$ a.e., where by Theorem
\ref{thm_main_ucgamma} we can choose $g:=u^{\gamma}$ as its continuous representative.
Let us show that this implies that $(-H_{\Omega})^{-1}$ itself, by its representation \eqref{eq_def_Homega}, is continuous.}
{Indeed, for every $\lambda>0$ the set $\{-H_\Omega<\lambda\}=\bigcup_{\mu<\lambda}\Omega_\mu$ is open, so that $f:=(-H_\Omega)^{-1}$ is lower semicontinuous, and hence $f\le g$ in $\Omega$, $g$ being continuous. If $f(x_0)<g(x_0)$ for some $x_0$, choose $\lambda$ with $f(x_0)<\frac1\lambda<g(x_0)$ and a ball $U\ni x_0$ on which $g>\frac1\lambda$:
then a.e.\ point of $U$ belongs to $\{-H_{\Omega}<\lambda\} \subseteq %
\Omega_\lambda$ and, $\Omega_\lambda$ being open and convex, $U\subseteq\Omega_\lambda$,

so that $-H_\Omega(x_0)\le\lambda$, a contradiction. Accordingly, the solution $u$ of \eqref{eq_gen_1laplace_a1} will always be identified with its representative $(-H_\Omega)^{-1/\gamma}$.}

Hence, Theorem
\ref{thm_main_ucgamma} gives that $(-H_{\Omega})^{-1}$ is concave, that is $-H_{\Omega}$ is harmonic convex
(equivalently, $H_{\Omega}^{-1}=-(-H_{\Omega})^{-1}$ is convex). In particular
$H_{\Omega}^{-1}\in Lip_{loc}(\Omega)$, and $H_{\Omega}$ is concave by Remark \ref{rem_-f_impl_concav}, so
that $H_{\Omega}\in Lip_{loc}(\Omega)$ as well.}

\smallskip
We prove the optimality, taking as competitor the square. Let then $\Omega=(0,\ell)^2$ as in Example \ref{ex_examp_optimal}$(iii)$ of Appendix
\ref{sec_examples}.
We focus on the behaviour of $H_{\Omega}$ along the diagonal emerging from
a vertex, which we place at the origin.
By \eqref{eq_H_square} we have
$$-H_{\Omega}(\delta,\delta)=\frac{1}{(2+\sqrt2)\,\delta} \qquad \hbox{for } 0<\delta<\frac{\ell}{(2+\sqrt{\pi})(2+\sqrt{2})}$$
which implies that $(-H_{\Omega})^{-1}$ is affine along the diagonal (thus $(-1)$-convexity inequality is attained with an equality), and for any $\theta<-1$ we have that $(-H_{\Omega})^{\theta}$ is not concave along the diagonal, thus $\theta=-1$ is optimal.

{The convex hull of Example \ref{ex_examp_optimal}$(ii)$ provides another instance of optimality, with equality along the axis, see \eqref{eq_example_axis}.}
{For dimensions $N\geq 3$, see Remark \ref{rem_opt_anyN}.}
\end{proof}

{\begin{remark}[Optimality in any dimension]\label{rem_opt_anyN}
The optimality of the exponent $-1$ is not a planar phenomenon, and an analogous example can be constructed in every dimension $N\geq3$. Arguing as in \cite[Proposition 2.7]{AlCa09} {(since $\Omega_{\lambda}$ itself is a minimizer of $P-\lambda |\cdot|$)}, each $\Omega_\lambda$ is of class $C^{1,1}$ with principal curvatures bounded by $\lambda$, so that, by Blaschke's rolling theorem for $C^{1,1}$ convex bodies \cite[Corollary 1.13]{Ban99} (see also \cite[Lemma 4.3]{AlCa09}), $\Omega_\lambda$ is a union of balls of radius $1/\lambda$ contained in $\Omega$. This gives $-H_\Omega\geq\max\{h(\Omega),1/r_\Omega\}$ in every dimension %
(in the plane this is an equality, by Lemma \ref{lem_explicit_Homega}). For the cube $\Omega=(0,\ell)^N$ one can check that $r_\Omega(s,\dots,s)\le\frac{\sqrt N}{\sqrt N-1}\,s$ for $s$ small, so that $(-H_\Omega)^{-1}$ vanishes at least linearly along the diagonal approaching a vertex; as in the planar case, this is incompatible with the concavity of $(-H_\Omega)^{\theta}=\big((-H_\Omega)^{-1}\big)^{|\theta|}$ for $\theta<-1$, since a positive concave function on $(0,s_0]$ cannot be $o(s)$ as $s\to0^+$.
\end{remark}}

\begin{remark}[An elementary proof in the plane]
\label{rem_elementary_planar}
{
When $N=2$, Corollary \ref{cor_var_mean_curv} admits a direct geometric proof, which we record both as a
consistency check and because it makes the optimality %
transparent.

By \eqref{eq_formula_H_planar} we have
$$-\frac{1}{H_{\Omega}}=\min\Big\{\frac{1}{h(\Omega)},\, r_{\Omega}\Big\},$$
so it suffices to observe that the function $r_{\Omega}$ defined in \eqref{eq_def_rOmega} is \emph{concave}. This is
immediate from the convexity of $\Omega$: if $B_{\rho_i}(c_i)\subseteq \Omega$ with
$x_i \in \overline{B_{\rho_i}(c_i)}$, $i=0,1$, then setting $c_t=(1-t)c_0+tc_1$, $x_t=(1-t)x_0+tx_1$ and $\rho_t=(1-t)\rho_0+t\rho_1$
one has $B_{\rho_t}(c_t)=(1-t)B_{\rho_0}(c_0)+tB_{\rho_1}(c_1)\subseteq \Omega$, again by convexity, and
$|x_t-c_t|\leq (1-t)\rho_0+t\rho_1=\rho_t$, so that
$x_t \in \overline{B_{\rho_t}}(c_t)$. Hence $r_{\Omega}(x_t) \geq (1-t) \rho_0+t \rho_1$ and
we can pass to the supremum over $\rho_0$ and $\rho_1$, obtaining
$r_{\Omega}(x_t)\geq (1-t)r_{\Omega}(x_0)+t
r_{\Omega}(x_1)$. Being the minimum of two concave functions, $(-H_{\Omega})^{-1}$ is concave.

Moreover,
harmonic convexity inequality holds in a sharp way along segments outside the Cheeger set where $r_{\Omega}$ is affine, a case which occurs e.g. along
$\overline{OP}$ in Example \ref{ex_examp_optimal}$(ii)$ and along the diagonal in Example
\ref{ex_examp_optimal}$(iii)$.
}

Two comments are in order. First, this argument is elementary only modulo Lemma \ref{lem_planar_structure},
that is, modulo the structure theorem of \cite{LeSa20} {(see also \cite{ACC05b})}, which is far from trivial. Second, and more
importantly, it is genuinely two-dimensional, as already commented in Remark \ref{rem_planar_only}.
{For the relation with the geometric approach of \cite[Section 4]{AlCa09}, which works in every dimension, see Remark \ref{rem_AlCa_related}; the proof through Theorem \ref{thm_main_ucgamma} is of a different, analytic nature.}
\end{remark}

{
\begin{proof}[Proof of Corollary \ref{cor_ext_gammamag1}]
The proof follows by Corollary \ref{cor_var_mean_curv} and \eqref{eq_sol_eq_Hom1}, together (when $\gamma > 1$) with the lower bound \eqref{eq_lower_bound_u}.
\end{proof}
}

{Corollary \ref{cor_minkowski} is now essentially a consequence of Corollary \ref{cor_var_mean_curv} and representation \eqref{eq_represent_Omegalambda}:} we give a direct proof for the reader's convenience.

 \begin{proof}[Proof of Corollary \ref{cor_minkowski}]
{If $t\in\{0,1\}$ the claim is trivial, and if $\lambda_1=\lambda_2=\lambda$ it reduces to the convexity of $\Omega_\lambda$; we may thus assume $t\in(0,1)$ and $\lambda_1\neq\lambda_2$.}
Let $z \in t \Omega_{\lambda_1} \oplus (1-t)\Omega_{\lambda_2}$, $t \in (0,1)$, $\lambda_1\neq \lambda_2$; thus there exists $x\in \Omega_{\lambda_1}$,
$y \in \Omega_{\lambda_2}$ such that $z=tx+(1-t)y$.
Hence $\lambda_i \geq h(\Omega)$ and
$$\lambda_1 \geq \inf\{ \lambda>0 \mid x \in \Omega_{\lambda}\}, \qquad \lambda_2 \geq \inf\{ \lambda>0 \mid y \in \Omega_{\lambda}\} ;$$
since $\lambda_1\neq \lambda_2$, %
we can assume (w.l.o.g.) $\lambda_1 > h(\Omega)$, {thus by \eqref{eq_represent_Omegalambda} and $x \in \Omega_{\lambda_1}$ we have that the first inequality must be strict}.
Hence by Corollary \ref{cor_var_mean_curv}
\begin{align*}
\frac{t}{\lambda_1} + \frac{1-t}{\lambda_2} <
\frac{t}{- H_{\Omega}(x)} +\frac{1-t}{- H_{\Omega}(y)} \leq \frac{1}{- H_{\Omega}(tx+(1-t)y)} ;
\end{align*}
thus
$$ \frac{1}{\frac{t}{\lambda_1} + \frac{1-t}{\lambda_2} } > %
\inf\big\{ \lambda>0 \mid tx+(1-t)y \in \Omega_{\lambda}\big\} .$$
By the monotonicity property of $\Omega_{\lambda}$
we obtain
$tx+(1-t)y \in \Omega_{\frac{1}{\frac{t}{\lambda_1}+\frac{1-t}{\lambda_2}}}$,
which is the claim. The last inclusion follows by the inequality among the weighted harmonic mean and the
weighted arithmetic mean.
\end{proof}

{%
{We observe now} that Examples \ref{ex_examp_optimal}$(iii)$--$(v)$ give a negative answer to the question {whether} there exists a uniform $\alpha \in (0,1]$ such that %
$(H_{\Omega})_{|\Omega}$
can be extended in a $C^{0,\alpha}$ way up to the boundary.}

\begin{proposition}[No uniform H\"older boundary regularity]
\label{prop_no_lip}
There exists a bounded convex domain $\Omega \subset \R^2$ with $\partial \Omega \in C^{\infty}$ such that $(H_{\Omega}^{-1})_{|\Omega}$ %
admits no Lipschitz extension to $\overline{\Omega}$. Moreover, for every
$\alpha \in {(0,1]}$ there exists a bounded convex domain $\Omega \subset \R^2$ with
$\partial \Omega \in C^{1,1}$ such that $(H_{\Omega}^{-1})_{|\Omega}$ {admits no $C^{0,\alpha}$ extension to {$\overline{\Omega}$}}. %
{The same statements hold for $(H_{\Omega})|_{\Omega}$ as well.}
\end{proposition}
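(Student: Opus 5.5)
The plan is to work entirely in the plane, where Lemma \ref{lem_explicit_Homega} gives the closed formula
$$(-H_\Omega)^{-1}=\min\{1/h(\Omega),\,r_\Omega\}.$$
This turns the question into one of elementary geometry: we need the modulus of continuity of $r_\Omega$ near $\partial\Omega$, in the region where $r_\Omega<1/h(\Omega)$, i.e. outside the Cheeger set. Since $H_\Omega\le -h(\Omega)<0$ and $-H_\Omega$ stays bounded near the relevant boundary points, the maps $s\mapsto 1/s$ and $s\mapsto s^{-1}$ are bi-Lipschitz on the range involved. So any failure of Lipschitz or $C^{0,\alpha}$ extendability for $H_\Omega^{-1}$ transfers to $H_\Omega$, which gives the last sentence of the statement. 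Also, $(H_\Omega^{-1})_{|\Omega}$ admits a $C^{0,\alpha}$ extension to $\overline\Omega$ if and only if $r_\Omega$ is uniformly $\alpha$-Hölder on a neighbourhood of the relevant boundary arc intersected with $\Omega$. It therefore suffices to exhibit two points $x_n,y_n\in\Omega$, both close to $\partial\Omega\setminus\partial K$, with
$$|r_\Omega(x_n)-r_\Omega(y_n)|\gg |x_n-y_n|^\alpha .$$

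\emph{First statement.} I would take the elongated ellipse of Example \ref{ex_examp_optimal}$(iv)$, which is smooth and not calibrable, and study the pole $\bar x$, where $\kappa=\bar\kappa>h(\Omega)$. By \eqref{eq_magic_coincid}, $r_\Omega(x)\to 1/\bar\kappa$ as $x\to\bar x$. The key computation is the boundary trace $\rho(y):=\lim_{x\to y}r_\Omega(x)$ for $y\in\partial\Omega$ near $\bar x$: this is the radius of the largest disc contained in $\Omega$ and tangent to $\partial\Omega$ at $y$.

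For $y$ at arclength $s$ from the pole, the tangent disc of radius $\rho$ is obstructed by the strongly curved arc around the pole. Expanding both the boundary and the circle to fourth order, I expect
$$\rho(y)-1/\bar\kappa\sim c\,|s|$$
or a comparable power; more importantly, the interior approach along the normal at $y$ should see $r_\Omega$ vary on a much shorter length scale. The points $x$ near $y$ at distance $d\ll |s|$ are contained in discs whose size is determined by a tangency competition between $y$ and the pole. I would then compute $r_\Omega$ along the normal segment through $y$ and show that its derivative behaves like a negative power of $d$ as $d\to0$, with $|s|$ chosen depending on $d$. This would contradict the Lipschitz bound.

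\emph{Second statement.} For each $\alpha\in(0,1]$ I would build a $C^{1,1}$ convex domain adapted to $\alpha$, for instance a domain whose boundary near a point $\bar x$ is a circular arc of radius $\varepsilon$ glued, with continuous tangent, to arcs of larger radius (or to curves with curvature profile $\bar\kappa - c|s|^m$). Such a boundary is $C^{1,1}$ because the curvature stays bounded even where it jumps. Choosing the exponent $m$ (or the gluing geometry) in terms of $\alpha$, the same disc-tangency computation should give an oscillation of $r_\Omega$ of order $|x-y|^{\beta(m)}$ with $\beta(m)\to 0$ as $m$ varies, and hence $\beta(m)<\alpha$ for a suitable choice. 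Finally I would check, using \eqref{eq_K_romega}, that these boundary points lie outside $\overline K$, so that the minimum in the formula is attained by $r_\Omega$.

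\emph{Main obstacle.} The hardest step is the explicit asymptotics of $r_\Omega$ near a point of maximal curvature, i.e. the optimization in \eqref{eq_rOmega_equiv} over centres $c$ with $|x-c|\le d_\Omega(c)$ when $x$ is close to the boundary. My first attempt would parametrize the candidate centres along inner normals and reduce the problem to a one-variable tangency condition solved by Taylor expansion. If that fails to produce the right power, I would instead choose piecewise-circular boundaries, where all the quantities become exactly computable.
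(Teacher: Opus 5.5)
Your reduction to $r_\Omega$ via \eqref{eq_formula_H_planar}, your choice of the elongated ellipse, and your transfer from $H_\Omega^{-1}$ to $H_\Omega$ all agree with the paper; the last step works because $-H_\Omega$ is bounded and bounded away from $0$ on $C^{1,1}$ domains. However, you never carry out the decisive computation for the first statement, and the one asymptotic you do predict is wrong. For the ellipse, the largest disc tangent at $(w(y),y)$ is centred on the major axis and has radius $\sqrt{b^4+(a^2-b^2)y^2}/a$. So the boundary trace exceeds $b^2/a$ by order $y^2$, not $|s|$.

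The way past your ``main obstacle'' is the paper's: approach the vertex along the major axis. Replacing a disc through $(t,0)$ by the average of itself and its mirror image (which stays in $\Omega$ by convexity) puts the optimal centre on the axis, see \eqref{eq_rOmega_symm}. The optimal disc is then tangent at $(w(y),\pm y)$, which gives $r_\Omega(t,0)=\big(b^2t+b\sqrt{(a^2-b^2)(a^2-t^2)}\big)/a^2=b^2/a+C\sqrt{a-t}+O(a-t)$. The mechanism is that the radius excess is of order $y^2$, while the depth $a-t$ reached on the axis is only of order $y^4$, because the second-order terms cancel. Your off-axis normal approach can also be made to work: at a fixed off-axis point near the vertex one already sees a $\sqrt d$ term, so $s$ need not depend on $d$. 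But this requires the same identification of the optimal discs, with matching upper and lower bounds, which your proposal leaves as an expectation.

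For the second statement, your first construction cannot work for $\alpha\le\frac12$: finitely many circular arcs glued with continuous tangent never give an exponent below $\frac12$. Take a junction of an arc of radius $\varepsilon$ with one of radius $R>\varepsilon$. The largest disc tangent on the flatter side at arclength $s$ from the junction has radius $\varepsilon+O(s)$ (and at least $\varepsilon+cs$), and it leaves a gap of order $s^2$ at the junction. So at depth $d$ you may take $s\sim\sqrt d$, giving $r_\Omega-\varepsilon\sim\sqrt d$, while inside the small arc $r_\Omega$ is even Lipschitz. A curvature plateau with jumps therefore gives exactly exponent $\frac12$, whatever the radii and lengths.

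What is needed is an \emph{isolated} maximum of curvature with a cusp-like profile $\bar\kappa-c|s|^m$ and $m\to0^+$; you never say in which direction $\beta(m)$ degenerates. The whole content of the proof is then the computation you postpone. With tangency at $(w_m(y),\pm y)$ and $\sigma=y/b$, the radius excess is $\frac{b^2(m+2)}{2a}\sigma^m(1+o(1))$. The depth is $a-t=\frac a2\big[\Phi_m-\frac\sigma2\Phi_m'\big]+O(\sigma^4)\sim\frac{am}{4}\sigma^{m+2}$, since the quadratic terms cancel. Together these give $\beta_m=\frac{m}{m+2}$.

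You must also:
\begin{itemize}
\item exhibit a convex $C^{1,1}$ domain with this profile; the paper uses $\Phi_m(\sigma)=\sigma^2-|\sigma|^{m+2}+|\sigma|^{2m+2}$, where the last term restores convexity;
\item show that $r_{\Omega_m}\to b^2/a$ at the vertex; the osculating disc of the ellipse lies in $\Omega_0\subset\Omega_m$;
\item check non-calibrability, $h(\Omega_m)\le h(\Omega_0)<a/b^2$ for $e>e^*$, so that the minimum in \eqref{eq_formula_H_planar} is $r_{\Omega_m}$ near the vertex.
\end{itemize}
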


\begin{proof}
For the first assertion take the elongated ellipse of
Example \ref{ex_examp_optimal}$(iv)$ {(the square of Example \ref{ex_examp_optimal}$(iii)$ behaves in the same way, but it is not smooth)}: in both cases
$(-H_{\Omega})^{-1}$ behaves like $\hbox{const}+\hbox{const}\,\sqrt{\delta}$, $\delta$ being the distance from
the relevant boundary point, so that its derivative blows up. For the second one, given $\alpha \in {(0,1]}$
choose $m \in (0,2)$ with $\beta_m=\frac{m}{m+2}<\alpha$, and take $\Omega=\Omega_m$ as in Example
\ref{ex_examp_optimal}$(v)$: by \eqref{eq_H_perturbed} we have
$(-H_{\Omega_m}(t,0))^{-1}-\frac{b^2}{a} \sim C (a-t)^{\beta_m}$ as $t \to a^-$, which is not
$O\big((a-t)^{\alpha}\big)$.

{Finally, since {the domains of Examples \ref{ex_examp_optimal}$(iv)$--$(v)$ are} $C^{1,1}$ and hence $H_{\Omega}$ is bounded and far from zero (see \eqref{eq_estim_H_dall'alto} and Remark \ref{rem_ifflinf}), the {lack of} regularity holds for $(H_{\Omega})|_{\Omega}$ as well.}
\end{proof}

\noindent It is worth stressing that these examples say something about the \emph{geometry}, and not only
about the regularity theory: by Example \ref{ex_examp_optimal}$(iii)$ the Lipschitz character is lost near a
flat portion of the boundary, whereas by Example \ref{ex_examp_optimal}$(iv)$ it is lost at the point of
maximal curvature. In both cases the local structure of $\partial \Omega$ at the relevant point is as good as
one may wish, and the loss of regularity is caused by the global geometry of $\Omega$ through the function
$r_{\Omega}$ of \eqref{eq_def_rOmega}.

\smallskip

{
{We notice that} a positive answer can be given, with regard to $(H_{\Omega}^{-1})_{|\Omega}$, for convex polytopes. This applies, for instance, to Example \ref{ex_examp_optimal}$(iii)$; notice that in polytopes $(H_{\Omega})_{|\Omega}$ is naturally unbounded, since the domain is not $C^{1,1}$.
\begin{proposition}
\label{prop_polytope}
Let $\Omega \subset \R^N$, $N\geq 2$, be (the interior of) a convex polytope. Then $(H_{\Omega}^{-1})_{|\Omega} \in Lip_{loc}(\Omega)$ and it admits a continuous extension over $\overline{\Omega}$.
\end{proposition}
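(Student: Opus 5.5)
The local Lipschitz part needs no new argument. By Corollary \ref{cor_var_mean_curv}, $f:=(-H_\Omega)^{-1}$ is a positive concave function on the convex open set $\Omega$, so $f\in Lip_{loc}(\Omega)$; and $H_\Omega^{-1}=-f$. The real content is the continuous extension to $\overline{\Omega}$.

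\textbf{Step 1: existence of boundary limits.} Since $f$ is concave and positive on the polytope $\Omega$, I extend it to $\overline{\Omega}$ by
$$\bar f(x_0):=\lim_{x\to x_0,\,x\in\Omega} f(x).$$
Concave functions on polytopes are lower semicontinuous up to the boundary in the following sense: any bounded concave function on a polytope has a concave extension that is continuous on the closed polytope. This is a classical result of Gale--Klee--Rockafellar, cf.\ \cite[Theorem 10.2]{Roc97}, which applies to locally simplicial sets, and polytopes are such sets. So I only need boundedness of $f$ and then to invoke this theorem. Boundedness comes from \eqref{eq_estim_H_dall'alto}: $0<f\le 1/h(\Omega)$.

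\textbf{Step 2: applying the theorem.} The concave function $f$ is finite on $\Omega$. Its upper semicontinuous hull (closure) $\mathrm{cl}\,f$ coincides with $f$ in $\Omega$, by continuity there, and is finite on $\overline\Omega$ because $f$ is bounded. By \cite[Theorem 10.2]{Roc97}, the closure of a concave function is continuous relative to any locally simplicial subset of its effective domain. The closed polytope $\overline\Omega$ is locally simplicial, so $\mathrm{cl}\,f$ restricted to $\overline\Omega$ is continuous. This closure is the desired continuous extension.

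\textbf{Main obstacle.} The only delicate point is that polyhedrality is really needed: for general convex sets, bounded concave functions can fail to extend continuously. This is consistent with Proposition \ref{prop_no_lip}, which only excludes Hölder (not continuous) extension. So I must make sure that the correct cited theorem is the locally simplicial one and that the domain hypothesis is verified; both hold for polytopes. It may also be worth identifying the boundary values explicitly: $\bar f$ vanishes on the lower-dimensional faces where $r_\Omega\to0$, consistently with Remark \ref{rem_ifflinf}, although this is not required for the statement.
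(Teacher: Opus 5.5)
Your proposal is correct and follows the paper's proof. The paper also derives positivity, concavity and boundedness of $(-H_\Omega)^{-1}$ from Corollary \ref{cor_var_mean_curv} and \eqref{eq_estim_H_dall'alto}, and then invokes the Gale--Klee--Rockafellar extension theorem for locally simplicial sets, citing \cite{GKR68} and \cite[Theorems 10.2 and 10.3]{Roc97}. Your argument via Theorem 10.2 applied to the upper semicontinuous hull is just an unpacked form of Theorem 10.3, and the remark on boundary values at lower-dimensional faces is not needed.
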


\begin{proof}
By Corollary \ref{cor_var_mean_curv}, $H_{\Omega}^{-1}$ is bounded and convex. Thus by \cite{GKR68} (see also \cite[Theorems 10.2 and 10.3]{Roc97}) we have the claim.
\end{proof}
}

{
\begin{remark}[Equality in Minkowski sum]
\label{rem_equal_mink}
{The sharpness of the harmonic convexity of $-H_{\Omega}$ can also be seen through the Minkowski sum relation in Corollary \ref{cor_minkowski}.}
Indeed, consider a rectangle $(0,a)\times(0,b)$. Observing that $\Omega^{\rho}=[\rho,a-\rho]\times [\rho,b-\rho]$ is a rectangle as well, and exploiting \eqref{eq_Omegalambda_planar} and properties of Minkowski sum, we obtain
\begin{equation*}
\begin{aligned}
t \Omega_{\lambda_1} \oplus (1-t)\Omega_{\lambda_2}  &= t \Omega^{1/\lambda_1}\oplus tB_{1/\lambda_1} \oplus (1-t)\Omega^{1/\lambda_2}\oplus (1-t)B_{1/\lambda_2} \\
&= t \Omega^{1/\lambda_1} \oplus (1-t)\Omega^{1/\lambda_2} \oplus B_{t/\lambda_1} \oplus B_{(1-t)/\lambda_2} \\
&=  \Omega^{t/\lambda_1+ (1-t)/\lambda_2} \oplus B_{t/\lambda_1+(1-t)/\lambda_2}= \Omega_{(t/\lambda_1+ (1-t)/\lambda_2)^{-1}}
\end{aligned}
\end{equation*}
which means that equality $t \Omega_{\lambda_1} \oplus (1-t)\Omega_{\lambda_2} = \Omega_{\frac{1}{\frac{t}{\lambda_1}+\frac{1-t}{\lambda_2}}}$ can be achieved in nontrivial cases $\Omega_{\lambda_i}\neq \Omega$.

Consider instead{, for $t\in(0,1)$ and $\lambda_1,\lambda_2\geq h(\Omega)$ with $\lambda_1\neq\lambda_2$,} the equality in the ``concavity'' case, i.e. $t \Omega_{\lambda_1} \oplus (1-t)\Omega_{\lambda_2}= \Omega_{t\lambda_1+(1-t)\lambda_2}$; we want to show that this may happen only in the trivial case $\Omega_{\lambda_1}=\Omega_{\lambda_2}=\Omega$. First, we observe the following strict monotonicity property
\begin{equation}\label{eq_strict_monot}
h(\Omega)<\lambda<\mu, \; \Omega_{\mu} \neq \Omega \implies \Omega_{\lambda} \subsetneq \Omega_{\mu}.
\end{equation}
Indeed, let $x \in \Omega_{\lambda}$ and $y \in \Omega \setminus \Omega_{\mu}$, then by \eqref{eq_represent_Omegalambda} $H_{\Omega}(y)\leq -\mu <-\lambda < H_{\Omega}(x)$, thus by continuity of $H_{\Omega}$ there exists $z \in [x,y]$ such that $-\mu < H_{\Omega}(z)< - \lambda$, i.e. $z \in \Omega_{\mu} \setminus \Omega_{\lambda}$, which implies \eqref{eq_strict_monot}.
We show now that $ \Omega_{t\lambda_1+(1-t)\lambda_2}=\Omega$: assuming w.l.o.g. $\lambda_1<\lambda_2$, set $\mu=t\lambda_1+(1-t)\lambda_2$ and $\lambda= \frac{1}{\frac{t}{\lambda_1}+\frac{1-t}{\lambda_2}}$, we have $\mu>\lambda>h(\Omega)$ (since $\lambda_2>h(\Omega)$); if by contradiction $\Omega_{\mu}\neq \Omega$, then \eqref{eq_strict_monot} would {contradict} $ \Omega_{\lambda} = \Omega_{\mu}$, which in {turn} follows by \eqref{eq_minkow_sum_int} and the assumption $t \Omega_{\lambda_1} \oplus (1-t)\Omega_{\lambda_2} = \Omega_{\mu}$.  As a consequence of monotonicity, $\Omega_{\lambda_2} = \Omega$; by the assumption, thus, $t \Omega_{\lambda_1} \oplus (1-t)\Omega = \Omega = t\Omega \oplus (1-t)\Omega$: thanks to {the} cancellation property of convex sets{, applied to the closures (which are compact)} (see e.g. \cite[Remark 1.7.6]{Sch14}), we have {$\overline{\Omega_{\lambda_1}} = \overline{\Omega}$, hence} $\Omega_{\lambda_1} = \Omega${, both sets being open and convex}, and hence the claim.
\end{remark}
}

{We conclude with a further consequence of Corollary~\ref{cor_minkowski},
obtaining the following Brunn--Minkowski inequalities.
\begin{corollary}[Brunn--Minkowski inequalities]
\label{cor_volume_polar}
Let $\Omega\subset\mathbb R^N$, $N\geq2$, be open,
bounded and convex, and let $K$ be its Cheeger set.
Assume $0\in K$. Then, for every
$\lambda_1,\lambda_2\geq h(\Omega)$ and $t\in[0,1]$,
setting $\lambda_t:=
\left(\frac{t}{\lambda_1}
+\frac{1-t}{\lambda_2}\right)^{-1}$, 
we have
\begin{equation}\label{eq_volume_BM}
|\Omega_{\lambda_t}|^{1/N}
\geq
t|\Omega_{\lambda_1}|^{1/N}
+(1-t)|\Omega_{\lambda_2}|^{1/N},
\end{equation}
and
\begin{equation}\label{eq_polar_BM}
|\Omega_{\lambda_t}^{\circ}|^{-1/N}
\geq
t|\Omega_{\lambda_1}^{\circ}|^{-1/N}
+(1-t)|\Omega_{\lambda_2}^{\circ}|^{-1/N},
\end{equation}
where $A^\circ$ denotes the polar body of $A$
with respect to the origin.
\end{corollary}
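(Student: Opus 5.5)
The plan is to derive both inequalities from the inclusion of Corollary \ref{cor_minkowski},
$$t\Omega_{\lambda_1}\oplus(1-t)\Omega_{\lambda_2}\subseteq\Omega_{\lambda_t},$$
combined with the classical Brunn--Minkowski inequality and its polar counterpart for convex bodies containing the origin. The degenerate cases are settled first. If $t\in\{0,1\}$ both inequalities are equalities. Otherwise $\lambda_t$ is a weighted harmonic mean of $\lambda_1,\lambda_2\ge h(\Omega)$, so $\lambda_t\geq h(\Omega)$ and $\Omega_{\lambda_t}$ is a nonempty open convex set.

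Since $0\in K=\Omega_{h(\Omega)}\subseteq\Omega_{\lambda}$ for every $\lambda\geq h(\Omega)$ (by monotonicity in $\lambda$), all sets involved contain the origin. Because $K$ is open, they even contain a common ball around $0$. Hence their polars are bounded convex bodies with finite positive volume. Boundaries have measure zero, so I may pass to closures whenever convenient.

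\emph{Volume inequality.} Monotonicity of Lebesgue measure applied to the inclusion above gives
$$|\Omega_{\lambda_t}|^{1/N}\geq |t\Omega_{\lambda_1}\oplus(1-t)\Omega_{\lambda_2}|^{1/N}.$$
The right-hand side is at least $t|\Omega_{\lambda_1}|^{1/N}+(1-t)|\Omega_{\lambda_2}|^{1/N}$ by the classical Brunn--Minkowski inequality for nonempty compact (here convex) sets, applied to the closures. This yields \eqref{eq_volume_BM}.

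\emph{Polar inequality.} Polarity reverses inclusions, so the same inclusion gives
$$\Omega_{\lambda_t}^\circ\subseteq\big(t\Omega_{\lambda_1}\oplus(1-t)\Omega_{\lambda_2}\big)^\circ,$$
and therefore
$$|\Omega_{\lambda_t}^\circ|^{-1/N}\geq|(tA\oplus(1-t)B)^\circ|^{-1/N},\qquad A:=\overline{\Omega_{\lambda_1}},\ B:=\overline{\Omega_{\lambda_2}}.$$
It remains to show the concavity-type inequality
$$|(tA\oplus(1-t)B)^\circ|^{-1/N}\geq t|A^\circ|^{-1/N}+(1-t)|B^\circ|^{-1/N}$$
for convex bodies containing $0$ in their interior. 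I would argue through the polar-coordinate formula
$$|A^\circ|=\frac1N\int_{S^{N-1}}h_A(\theta)^{-N}\,d\theta,$$
where $h_A$ is the support function, together with $h_{tA+(1-t)B}=th_A+(1-t)h_B$. The map $h\mapsto\big(\int h^{-N}\big)^{-1/N}$ is concave on positive functions. This is the reverse Minkowski inequality for the $L^{-N}$ quasi-norm, or equivalently the $p$-mean inequality with $p=-N<1$, which gives $\|f+g\|_{-N}\geq\|f\|_{-N}+\|g\|_{-N}$ for positive $f,g$. Applying it to $f=th_A$ and $g=(1-t)h_B$ yields the claim, and hence \eqref{eq_polar_BM}. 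This is the Firey-type Brunn--Minkowski inequality for polar volumes.

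The main obstacle is this last step: justifying the reverse Minkowski inequality for $L^{-N}$ and handling the boundary versus closure and origin issues so that the polars have finite volume. The assumption $0\in K$, with $K$ open, is exactly what ensures the latter.
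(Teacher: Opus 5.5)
Your proof is correct and follows the paper's argument. For \eqref{eq_volume_BM} both use the inclusion of Corollary \ref{cor_minkowski} applied to closures together with the classical Brunn--Minkowski inequality; for \eqref{eq_polar_BM} both use reversal of inclusions under polarity together with Firey's dual Brunn--Minkowski inequality. The one difference is in that last ingredient: the paper rewrites $(tA\oplus(1-t)B)^\circ$ as the harmonic combination of $A^\circ$ and $B^\circ$ via the bipolar theorem and cites \cite[Theorem 3]{Fir61}. You instead prove that inequality directly from $|A^\circ|=\frac1N\int_{S^{N-1}}h_A^{-N}$, linearity of support functions, and the superadditivity of $h\mapsto(\int h^{-N})^{-1/N}$. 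That superadditivity follows at once from the $1$-homogeneity of this map and the convexity of the set $\{\int h^{-N}\le 1\}$.
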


\begin{proof}
The cases $t=0,1$ are trivial, so let $t\in(0,1)$.
Set $K_i:=\overline{\Omega_{\lambda_i}}$,
$i=1,2,t$. By Corollary~\ref{cor_minkowski}, $tK_1+(1-t)K_2\subseteq K_t$, thus the classical Brunn--Minkowski inequality
immediately yields \eqref{eq_volume_BM}.

For \eqref{eq_polar_BM}, set $C_i:=K_i^\circ$,
$i=1,2$, and $D_t:=(tK_1+(1-t)K_2)^\circ$. 
Since $0\in\operatorname{int}K_i$, the bipolar
theorem gives
$$
D_t=(tC_1^\circ+(1-t)C_2^\circ)^\circ,
$$
which is the \emph{harmonic combination} of $C_1$
and $C_2$ \cite[p.~452]{Fir61}. 
Thus, the harmonic (dual) Brunn--Minkowski inequality
\cite[Theorem 3]{Fir61}
(see also \cite[Theorem 6]{Lut75}) yields
$$
|D_t|^{-1/N}
\geq
t|C_1|^{-1/N}
+(1-t)|C_2|^{-1/N}.
$$
Since, by Corollary~\ref{cor_minkowski}, $K_t^\circ\subseteq D_t$, the second
claim follows.
\end{proof}
}

\section{Some further remarks and extensions}
\label{sec_further}

\subsection{The weighted torsion case $\gamma=0$}
\label{sec_torsion}

{In this subsection we briefly discuss the torsion case, that is
\begin{equation}\label{eq_torsion_1}
\begin{cases}
-\Delta_1 u = a(x) & \quad \hbox{in $\Omega$}, \\
u = 0 &\quad \hbox{on $\partial \Omega$},
\end{cases}
\end{equation}
approximated by
\begin{equation}\label{eq_torsion_p}
\begin{cases}
-\Delta_p u_p = a(x)& \quad \hbox{in $\Omega$}, \\
u_p = 0 &\quad \hbox{on $\partial \Omega$}.
\end{cases}
\end{equation}
As we shall see, this case is much more degenerate than the singular one.

We first record the improvement of
\cite[Theorem 1.4]{GaSq25} that follows from Proposition \ref{prop_approx_general}, and then we summarize in a
 remark what is known about the limit $p \to 1^{+}$.}

\begin{theorem}[\cite{GaSq25}, $\gamma=0$, improved]
\label{thm_GS_torsion_imp}
Let $\Omega \subset \R^N$, $N\geq 2$, be open, bounded and convex, and let $p>1$. Assume $a(x)$ positive on $\Omega$ and
 $\theta$-concave with $\theta \geq 1$. Then the solution $u$ of \eqref{eq_torsion_p}
 is such
that $u^{\frac{\theta(p-1)}{1+\theta p}}$ is concave. If $a$ is constant, then $u^{\frac{p-1}{p}}$ is concave.
\end{theorem}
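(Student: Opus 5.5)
The plan mirrors the proof of Theorem \ref{thm_GS_autonom_imp}: take the smooth result of \cite[Theorem 1.4]{GaSq25} as a black box and remove both the regularity of $\partial\Omega$ and that of the weight $a$ via Proposition \ref{prop_approx_general}. I take \cite[Theorem 1.4]{GaSq25} to state the following. If $\partial\Omega\in C^{2,\alpha}$ is strongly convex and $a$ is positive, suitably smooth and $\theta$-concave with $\theta\geq1$, then $u^{\frac{\theta(p-1)}{1+\theta p}}$ is concave. In the notation of Remark \ref{rem_which_transformation} this is the concavity of $\varphi(u)$ with $\varphi(t)=t^{\frac{\theta(p-1)}{1+\theta p}}$; for constant $a$ we have $\theta=+\infty$ and $\varphi(t)=t^{\frac{p-1}{p}}$. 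Here $f(x,t)=a(x)$ does not depend on $t$, so it is trivially non-increasing, and $F(x,t)=a(x)t$. The functional $J$ is therefore exactly the energy of \eqref{eq_torsion_p}, and its unique minimizer is the solution $u$.

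Step 1: approximating data. Fix $\Omega^k$ as in \eqref{eq_OmegaktoOmega}. Since $a$ is positive and $\theta$-concave, $a^\theta$ is concave, hence continuous, and $a$ is locally bounded in $\Omega$. The first issue is that $a$ may be unbounded near $\partial\Omega$ (if $\theta\geq1$ and $a^\theta$ is concave and positive, $a$ is in fact bounded by the maximum of a concave function on a bounded convex set, but it could degenerate to $0$). So I set $a_k:=(a^\theta+1/k)^{1/\theta}$ restricted to $\Omega^k$, mollified if needed. This $a_k$ is positive and bounded away from zero. It stays $\theta$-concave, because mollifying a concave function gives a concave function on slightly smaller sets, and the mollification radius can be chosen smaller than $\dist(\Omega^k,\partial\Omega)$. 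The family is uniformly bounded, since $a^\theta$ is concave, hence bounded above on compact sets. Hypothesis \eqref{eq_hyp_f_approx} then holds, and \eqref{eq_conv_f_approx} reduces to $\|a_k-a\|_{L^{p'}(\Omega^k)}\to0$. This follows from local uniform convergence together with the uniform bound and dominated convergence, provided $a$ is bounded. I would check boundedness by noting that $a^\theta$, being concave on a bounded convex set, is bounded above.

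Step 2: apply Proposition \ref{prop_approx_general}. On each smooth, strongly convex $\Omega^k$, \cite[Theorem 1.4]{GaSq25} gives that $\varphi(u_k)$ is concave. Take $\varphi_k\equiv\varphi$ and $I=[0,\infty)$, noting that the $u_k$ are non-negative by the weak maximum principle because $a_k>0$. Proposition \ref{prop_approx_general} then yields concavity of $\varphi(u)$ in $\Omega$. The constant case is the special case $a_k\equiv a$ with $\varphi(t)=t^{(p-1)/p}$. Continuity of $u$, needed at the end of that proposition, comes from the regularity argument in the proof of Lemma \ref{lem_conv_minim}.

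Main obstacle. The delicate point is Step 1, namely matching the exact regularity and structure hypotheses of \cite[Theorem 1.4]{GaSq25}: smoothness of $a_k$, positivity, and preservation of $\theta$-concavity under regularization. The approach I would try first is to mollify $a^\theta$ rather than $a$, so that concavity is preserved automatically, and to add $1/k$ for positivity. If the cited theorem also asks for regularity of $a$ up to $\partial\Omega^k$, that is harmless, because the mollified function is smooth on a neighbourhood of $\overline{\Omega^k}$.
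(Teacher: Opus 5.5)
Your proposal is correct and follows essentially the same route as the paper: regularize $a^\theta$ by mollification on $\Omega^k$ (the paper takes $a_k=(a^\theta*\rho_k)^{1/\theta}$ with mollification radius below $\frac12\dist(\Omega^k,\partial\Omega)$), apply \cite[Theorem 1.4]{GaSq25} on the smooth strongly convex $\Omega^k$, and pass to the limit via Proposition \ref{prop_approx_general} with $f_k(x,t)=a_k(x)$ and $\varphi(t)=t^{\frac{\theta(p-1)}{1+\theta p}}$. The only difference is your shift by $1/k$, which is harmless but unnecessary: $a$ is concave (hence bounded and continuous) and positive, so it is already bounded away from zero on a neighbourhood of $\overline{\Omega^k}$.
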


\begin{proof}
{Note first that $a>0$ and $\theta$-concave with $\theta \geq 1$ imply that $a$ is concave, hence
$a \in L^{\infty}(\Omega)\cap C(\Omega)$ and $\inf_{\Omega'} a>0$ for every $\Omega' \Subset \Omega$.}
Let $\Omega^k$ be as in \eqref{eq_OmegaktoOmega} {and $\eps_k<\frac{1}{2} \dist(\Omega^k, \partial \Omega)$}.

Set $a_k:= (a^{\theta}*\rho_k)^{1/\theta}$, defined on $\Omega^k$ and extended by $0$ outside, $\rho_k$ being a
standard mollifier {with support in $B_{\eps_k}(0)$}; then $a_k \to a$ in $L^{p'}(\Omega)$. %
{Note that $a_k$ is smooth and positive on $\Omega^k$: indeed $a^{\theta}$ is concave, hence $a^{\theta}*\rho_k$
is concave and smooth, and it is bounded away from zero on $\Omega^k$ by the first observation, so that raising to
the power $1/\theta$ preserves smoothness; moreover $a_k$ is $\theta$-concave on $\Omega^k$, since convolution
preserves concavity.}

{By \cite[Theorem 1.4]{GaSq25} the minimizer $u_k$ of
$J_k(v) := \frac{1}{p} \int_{\Omega^k} |\nabla v|^p - \int_{\Omega^k} a_k v$
on $W^{1,p}_0(\Omega^k)$ satisfies that $u_k^{\frac{\theta(p-1)}{1+\theta p}}$ is concave in $\Omega^k$. The datum
$f_k(x,t):=a_k(x)$ is independent of $t$, hence trivially non-increasing and %
bounded, so that
\eqref{eq_hyp_f_approx} and \eqref{eq_conv_f_approx} hold.
Proposition \ref{prop_approx_general}, applied with
$\varphi(t)=t^{\frac{\theta(p-1)}{1+\theta p}}$, gives the claim.}
\end{proof}

{
\begin{remark}[The limit $p \to 1^{+}$ for $\gamma=0$: a summary]
\label{rem_torsion_summary}
Contrary to the singular case, for $\gamma=0$ the limit $p\to1^{+}$ is essentially always degenerate. We
summarize the known picture, referring to \cite{CiTr03}, \cite[Theorems 4.1, 4.3, Remark 4.7]{MdLT08},
\cite[Theorems 3.1, 4.1, 4.2, Remark 4.5]{MdLT09}, \cite{OrPo17}, \cite{BEM16} (see also
\cite[Theorems 3.4, 7.1, Remark 7.2]{DGOP19}, \cite[Section 6.2]{LOPdL21}){, and to \cite{KaFr03} for the relation with the Cheeger constant}.
\begin{itemize}[leftmargin=*]
\item[$(a)$] Problem \eqref{eq_torsion_1} has no uniqueness: if $\phi$ is non-decreasing, locally
Lipschitz and $\phi(0)=0$, and $u$ is a solution, then $\phi(u)$ is a solution as well. %
This is a structural
consequence of the $0$-homogeneity of $\Delta_1$ in the gradient.
\item[$(b)$] Restricting to the solutions obtained as limits of $u_p$, the behaviour is almost dichotomic:
the limit is finite a.e.\ if and only if
$\norm{a}_{W^{-1,\infty}(\Omega)}\leq 1$, but it vanishes identically if the inequality is strict;
similar information holds w.r.t.\ the $L^{N,\infty}(\Omega)$ and $L^{N}(\Omega)$ norms,
e.g. $\norm{a}_{L^{N,\infty}(\Omega)}< (N-1)\omega_N^{1/N}$ or
$\norm{a}_{L^N(\Omega)}\leq N\omega_N^{1/N}$ imply a trivial limit.
When $a\equiv1$ these reduce to the dichotomy
$h(\Omega)>1$ versus $h(\Omega)<1$ of \cite{BEM16}, the critical case $h(\Omega)=1$ being known only for
$\Omega=B_r$ \cite{Kaw90} and, in general, still open.
\item[$(c)$] All the above thresholds are mutually consistent, due to Sobolev-type inequalities in $BV$:
the relevant constants are
$\mathcal{S}_1=\big(N \omega_N^{1/N}\big)^{-1}$ for the embedding $BV(\Omega)\hookrightarrow
L^{\frac{N}{N-1}}(\Omega)$ and $\tilde{\mathcal{S}}_1=\big((N-1)\omega_N^{1/N}\big)^{-1}$ for the
Lorentz refinement $BV(\Omega)\hookrightarrow L^{\frac{N}{N-1},1}(\Omega)$ (the latter being a sharper
form of the former, since $L^{\frac{N}{N-1},1}(\Omega) \hookrightarrow L^{\frac{N}{N-1}}(\Omega)$, and it is
what produces the $L^{N,\infty}$-threshold above {by duality}), so that
$$\norm{a}_{W^{-1,\infty}(\Omega)}\ \leq\ \min\Big\{\mathcal{S}_1 \norm{a}_{L^{N}(\Omega)},\
\tilde{\mathcal{S}}_1 \norm{a}_{L^{N,\infty}(\Omega)}\Big\} .$$
{Here the Lorentz quasi-norm is normalized as $\norm{a}_{L^{N,\infty}(\Omega)}:=\sup_{t>0}t\,|\{|a|>t\}|^{1/N}$, so that $\int_\Omega|a\,v|\le\tilde{\mathcal S}_1\norm{a}_{L^{N,\infty}(\Omega)}\,|Dv|(\R^N)$ for every $v\in BV(\R^N)$ vanishing outside $\Omega$, the threshold being attained by $a(x)=(N-1)/|x|$.} We refer to \cite{Alv77,BeSh88, CiTr03} for these constants, and we stress that the
two arguments of the minimum are not comparable with each other.
\item[$(d)$] The fractional counterpart of this problem is treated in \cite{Buc25}; here the dichotomy is
expressed in terms of the weighted Cheeger constant $h_a(\Omega)$ (adapted to the weight $a(x)$, {see also Open Problem \ref{op_weighted}}), and this is
coherent with the previous results due to the equality
$h_a(\Omega)=\norm{a}_{W^{-1,\infty}(\Omega)}^{-1}$ (which can be checked, but goes beyond the scopes of
the present paper).
In \cite{Buc25} it is also shown that the limit solutions are ``flat'', in the sense
that $\{u=\norm{u}_{\infty}\}$ has positive measure.
\end{itemize}
\end{remark}
}

{
\begin{remark}[Concavity in the torsion case]
\label{rem_caso_gamma0}
Combining Theorem \ref{thm_GS_torsion_imp} with Theorem \ref{thm_rock}, and observing that
$$\Big(\tfrac{\theta(p-1)}{1+\theta p}\Big)^{-1}\Big(u_p^{\frac{\theta(p-1)}{1+\theta p}}-1\Big)
\longrightarrow \log(u) \qquad \hbox{as } p \to 1^{+},$$
one obtains the following conditional statement: if $\Omega$ is open, bounded and convex, $a>0$ is
$\theta$-concave with $\theta \geq 1$, and $u$ is a limit of the $u_p$'s as in Remark
\ref{rem_torsion_summary}, then $\log(u)$ is concave on every open convex set $\Lambda$ such that
$0<u<+\infty$ a.e.\ in $\Lambda$; in particular $u$ is continuous there, and its maximum set is convex, so
that the ``flat zone'' of \cite{OrPo17} is a convex set{, in accordance with} the fractional case \cite{Buc25}.

By Remark \ref{rem_torsion_summary}$(b)$, however, such a $\Lambda$ can be non-empty only in the critical
regime, and we do not know whether this actually occurs.
\end{remark}
}

\subsection{The non-autonomous case: a non-optimal result for $p>1$}
\label{sec_nonautonomous}

We consider now, for $\gamma \in (0,1]$,
\begin{equation}\label{eq_nonaut_1}
\begin{cases}
-\Delta_1 u = a(x) u^{-\gamma}& \quad \hbox{in $\Omega$}, \\
u = 0 &\quad \hbox{on $\partial \Omega$},
\end{cases}
\end{equation}
and its $p$-Laplace approximation
\begin{equation}\label{eq_nonaut_p}
\begin{cases}
-\Delta_p u_p = a(x) u_{p}^{-\gamma} & \quad \hbox{in $\Omega$}, \\
u_p = 0 &\quad \hbox{on $\partial \Omega$}.
\end{cases}
\end{equation}
Existence of a non-negative solution $u \in BV(\Omega)\cap L^{\infty}(\Omega)$ of \eqref{eq_nonaut_1}
for $0\leq a \in L^N(\Omega)$, together with its approximation by the solutions $u_p$ of
\eqref{eq_nonaut_p} and with uniqueness and positivity when $a>0$, is proved in
\cite[Theorems 5.1, 5.3]{DGS19} (see also \cite{DGOP19,LOPdL21}).

In accordance with \cite[Theorems 1.4, 6.7]{GaSq25}, one would expect the solution $u_p$ to be
$\frac{\theta(p-1+\gamma)}{1+\theta p}$-concave whenever $a>0$ is $\theta$-concave with $\theta\geq1$, and
consequently the solution of \eqref{eq_nonaut_1} to be $\frac{\theta \gamma}{1+\theta}$-concave.
Unfortunately, achieving this seems to be a hard task, for the reasons explained in
\cite[Remark 6.9(ii)]{GaSq25} and recalled in Remark \ref{rem_stronger_result} below. What one obtains is the
following weaker statement. In what follows, we {adopt the convention} $\frac{1}{0}=\infty$.

\begin{theorem}\label{thm_conc_plapl_tors}
Let $\Omega \subset \R^N$, $N\geq 2$, be open, bounded and convex, and let $\gamma \in (0, 1]$, $p>1$. Assume
$a>0$ on $\Omega$ and $a$ $\theta$-concave with $\theta \geq 1$.

Assume moreover
$$\gamma \in \left(0, \frac{p-1}{p}\right], \qquad \theta \geq \frac{p-1+\gamma}{p-1-p\gamma} .$$
Then the solution $u$ of \eqref{eq_nonaut_p} is $\frac{\theta(p-1)}{1+\theta p}$-concave.
\end{theorem}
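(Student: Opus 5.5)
We reduce to the torsion-type result of \cite{GaSq25} on smooth strongly convex domains with smooth data, then remove regularity with Proposition \ref{prop_approx_general} and remove the singularity by letting $\eta\to0^+$, as in the proof of Theorem \ref{thm_GS_autonom_imp}.

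\emph{Step 1: regularized datum and why the conditions appear.} For $\eta>0$ consider $-\Delta_p u_\eta = a(x)(u_\eta^++\eta)^{-\gamma}$ with homogeneous Dirichlet condition. The natural weight to feed into the concavity machinery of \cite[Section 5]{GaSq25} is the product $b(x,t):=a(x)\,g_\eta(t)$. We will treat the problem as a weighted torsion problem with respect to $a$, and aim for the exponent $\frac{\theta(p-1)}{1+\theta p}$ of Theorem \ref{thm_GS_torsion_imp} rather than the expected one.

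\emph{Step 2: the structural inequality on smooth domains.} The concavity-maximum-principle argument of \cite{GaSq25} (Korevaar/Kennington type, via the concavity function of $v=u^{q}$ with $q:=\frac{\theta(p-1)}{1+\theta p}$) needs the transformed right-hand side to be jointly concave in $(x,v)$ in a suitable harmonic-mean sense. Writing the equation for $v$, the nonlinearity becomes $a(x)\,v^{-\gamma/q}$ times powers of $v$ and $|\nabla v|$ coming from the change of variables. The requirement reduces to checking that the map $(x,v)\mapsto$ (the relevant power of) $a(x)\,(v^{1/q}+\eta)^{-\gamma}$ satisfies the needed generalized concavity.

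Since $a^\theta$ is concave and $(v^{1/q}+\eta)^{-\gamma}$ is decreasing in $v$, this is a H\"older-type combination of a $\theta$-concave function of $x$ with a power of $v$. The combination is admissible precisely when the exponent of $v$ has the right sign and size. Concretely, I expect the conditions $\gamma\le\frac{p-1}{p}$ (so that $\gamma/q$ stays below the threshold) and $\theta\ge\frac{p-1+\gamma}{p-1-p\gamma}$ (equivalently $q\ge$ a bound making the $v$-exponent compatible with the $x$-exponent) to emerge exactly from this algebraic check. I would verify it by applying the generalized H\"older inequality for power means, $M_{s}(\alpha)\,M_{r}(\beta)\ge M_{\sigma}(\alpha\beta)$ with $1/\sigma=1/s+1/r$, to the two factors. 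This step, pinning down the precise admissibility computation, is where I expect the main obstacle; it is also why the result is non-optimal, consistently with \cite[Remark 6.9(ii)]{GaSq25}. Here I would invoke \cite[Corollary 5.6]{GaSq25} directly, with $\varphi(t)=t^{q}$, on smooth strongly convex $\Omega^k$ with mollified $a_k=(a^\theta*\rho_k)^{1/\theta}$, exactly as in the proof of Theorem \ref{thm_GS_torsion_imp}.

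\emph{Step 3: approximation.} Next apply Proposition \ref{prop_approx_general} with $f_k(x,t)=a_k(x)(t^++\eta)^{-\gamma}$ and $f(x,t)=a(x)(t^++\eta)^{-\gamma}$. These are bounded, since $a$ is concave and hence bounded and $g_\eta\le\eta^{-\gamma}$; they are non-increasing in $t$; and $\sup_t|f_k-f|\le\eta^{-\gamma}|a_k-a|\to0$ in $L^{p'}$, so \eqref{eq_hyp_f_approx}--\eqref{eq_conv_f_approx} hold. This gives concavity of $u_\eta^{q}$ on $\Omega$. Finally let $\eta\to0^+$. Then $u_\eta\to u$ a.e., with $u>0$ by positivity of $a$ and \cite{DGS19,DeC13}. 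Since $q$ does not depend on $\eta$, Theorem \ref{thm_rock} applied to $-u_\eta^{q}$ yields concavity of $u^{q}$. Continuity of $u$ then identifies $u^q$ with the concave limit everywhere.
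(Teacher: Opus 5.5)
Your outer architecture is the paper's: regularize with $(t+\eta)^{-\gamma}$, get concavity on smooth strongly convex domains from \cite[Corollary 5.6]{GaSq25}, remove regularity with Proposition \ref{prop_approx_general}, and let $\eta\to0^+$ via Theorem \ref{thm_rock}. Your Step 3 is fine. The gap is Step 2, which is the whole content of the theorem. You only \emph{expect} the restrictions on $\gamma$ and $\theta$ to emerge from a check you do not perform, and the check you describe cannot work as stated.

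\begin{itemize}
\item \cite[Corollary 5.6]{GaSq25} does not let you prescribe $\varphi(t)=t^q$. The transformation is generated by a splitting $f(x,t)=h(x,t)\,g(t)$ through $G=\int_0^t g$, $\varphi=\int G^{-1/p}$, $\psi=\varphi^{-1}$. What must be verified is (A2), the monotonicity of $t\mapsto\frac{\psi''}{\psi'}\big(h(x,\psi(t))-\Theta\big)$, and (A5), the joint concavity of $(x,t)\mapsto t^2\frac{\psi''}{\psi'}\big(h(x,\psi(t))-\Theta\big)$.
\item The function you propose to test, $a(x)(v^{1/q}+\eta)^{-\gamma}$ or any power of it, is strictly decreasing in $v$. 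A positive decreasing function on $(0,\infty)$ is not $\omega$-concave for any $\omega>0$. Here $\omega=\frac{\theta}{\theta-1}\geq1$, so your H\"older/Kennington combination fails for it.
\item With the natural splitting $h=a$, $g=(t+\eta)^{-\gamma}$, (A5) genuinely fails, since $t^2\mathcal{Q}(t)\sim ct$ (Remark \ref{rem_stronger_result}).
\item You cannot reuse \cite[Theorem 1.4]{GaSq25} ``exactly as'' in Theorem \ref{thm_GS_torsion_imp}, because the datum now depends on $u$.
\end{itemize}

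The missing idea is an artificial splitting: take $g(t)=t^{\delta}$ and $h(x,t)=a(x)(t+\eta)^{-\gamma}t^{-\delta}$ with $\delta\in[0,p-1)$. Then:
\begin{itemize}
\item $\varphi(t)\sim t^{\frac{p-1-\delta}{p}}$, independently of $\eta$, and $\psi(t)=Ct^{\mu}$ with $\mu=\frac{p}{p-1-\delta}$, so $\frac{\psi''}{\psi'}=\frac{C'}{t}$.
\item (A2) holds once $0<\Theta<\inf a\cdot\inf_{(0,M]}(t+\eta)^{-\gamma}t^{-\delta}$.
\item (A5) reduces to the joint concavity of $a(x)(t^{\mu}+\eta')^{-\gamma}t^{1-\mu\delta}$. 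The factor $t^{1-\mu\delta}$, produced by $t^2\psi''/\psi'$ and by dividing out $g$, is exactly what your function lacks.
\item By \cite[Proposition 2.2]{GaSq25} it then suffices that $\rho(t)=(t^{\mu}+1)^{-\gamma\omega}t^{(1-\mu\delta)\omega}$ be concave.
\item Non-decay at infinity requires $1-\mu\delta\geq\mu\gamma$, which forces $\gamma\le\frac{p-1}{p}$.
\item Concavity near $0$ requires $(1-\mu\delta)\omega\le1$, that is $\delta\geq\frac{p-1}{1+p\theta}$.
\end{itemize}
Choosing $\delta=\frac{p-1}{1+p\theta}$ gives $\rho(t)=(t^{\mu}+1)^{-\gamma\omega}t$. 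Its second derivative is $\le0$ if and only if $\gamma\omega\mu\le1$, which is precisely $\theta\geq\frac{p-1+\gamma}{p-1-p\gamma}$. With this $\delta$ one gets $\frac{p-1-\delta}{p}=\frac{\theta(p-1)}{1+\theta p}$, the claimed exponent. Without this decomposition and computation, your Step 2 is a conjecture rather than a proof.
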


The result is weaker than the expected one: essentially, we only recover the concavity of the torsion problem
$\gamma=0$, while in the singular case a better exponent is expected. Moreover the assumption on $\theta$ is
more restrictive, and the restriction on $\gamma$ does not allow to pass to the limit $p\to1^{+}$.
{As we shall see in the proof, however, this last restriction is \emph{sharp within the method}: by
\eqref{eq_rho_secondo} below, the condition $\gamma \leq \frac{p-1}{p}$ is {necessary} %
for the
concavity of the auxiliary function $\rho$ associated with the multiplicative splitting $g(t)=t^{\delta}$. Let
us stress that this concerns the function $\rho$ only, and is unrelated to the (failing) concavity of
$t^2\psi''/\psi'$ discussed in Remark \ref{rem_stronger_result}, which is what obstructs the \emph{optimal}
exponent.}

\begin{proof}[Proof of Theorem \ref{thm_conc_plapl_tors}]
{Since $\inf_{\Omega'} a>0$ on any $\Omega' \Subset \Omega$, by the approximation process of Proposition \ref{prop_approx_general} (and combining the ideas of the proofs of Theorems \ref{thm_GS_autonom_imp} and \ref{thm_GS_torsion_imp}) we can assume $\Omega$ smooth, $\inf_{\Omega} a>0$ and $a \in C^{1,\alpha}(\Omega)$.}

We first study the concavity of the solutions of
\begin{equation}\label{eq_gen_approx_plaplace}
\begin{cases}
-\Delta_p u = a(x) \dfrac{1}{(u+\eta)^{\gamma}} & \quad \hbox{in $\Omega$}, \\
u = 0 &\quad \hbox{on $\partial \Omega$},
\end{cases}
\end{equation}
for $\eta>0$, as done in \cite[Section 6.2]{GaSq25}, and then we let $\eta\to0^{+}$.
We write $f(x,t) := a(x) (t+\eta)^{-\gamma}$ as (see \cite[(F1), Section 5.1]{GaSq25}, with $k\equiv 0$ therein)
\begin{equation}\label{eq_decomp_fxt}
f(x,t) = \big(a(x) (t+\eta)^{-\gamma} t^{-\delta}\big)\, t^{\delta} =: h(x,t)\, g(t)
\end{equation}
for some $\delta \in [0,p-1)$ to be fixed, and we set $G(t):= \int_0^t g \sim t^{\delta+1}$ and $\psi$ so that \cite[(F4), Section 5.1]{GaSq25}
$$\frac{g(\psi(t))}{(\psi'(t))^{p-1}} = \frac{\psi''(t)}{\psi'(t)}.$$
Such $\psi$ is
the inverse of

$$\varphi(t):= \int_{0}
^t (G(s))^{-\frac{1}{p}}\, ds \sim t^{\frac{p-1-\delta}{p}},$$
so that, for some $C>0$,
$$\psi(t)= C t^{\frac{p}{p-1-\delta}}, \qquad \frac{\psi''(t)}{\psi'(t)} = \frac{1+\delta}{p-1-\delta}\,
\frac{1}{t} = \frac{C'}{t}.$$
To apply \cite[Corollary 5.6]{GaSq25} we need to check, for a suitable $\Theta >0$ making
$h(x,\psi(t))-\Theta>0$ on the domain of reference, namely \cite[(A1), Section 5.1.3]{GaSq25}
$$0<\Theta < \inf_{x \in \Omega} a(x)\, \inf_{t \in (0, {M}
] } \big((t+\eta)^{-\gamma}
t^{-\delta}\big),$$
{for some suitable $M>0$},
the following conditions: %
\begin{itemize}[leftmargin=*]
\item ($\varphi_1$)-($\varphi_2$): technical assumptions on $\varphi$ straightforwardly verifiable, see \cite[Section 5]{GaSq25};
\item (f1)--(f3), (fs): sub $p$-growth, positivity, regularity in $(0,+\infty)$, coercivity and uniqueness, straightforwardly verifiable;
\item (F2)-(F3): $G>0$ and $h>0$, which is clear;
\item (A2): $\frac{\psi''}{\psi'}$ non-increasing, which is clear, and
$t \mapsto \frac{\psi''(t)}{\psi'(t)} \big(h(x,\psi(t)) - \Theta\big)$ non-increasing (Step 1);
\item (A4): $\frac{\psi''}{\psi'}>0$, which is clear;
\item (A5): $t^2 \frac{\psi''}{\psi'}$ concave, which is clear since it is linear, and
$(x,t) \mapsto t^2\frac{\psi''(t)}{\psi'(t)} \big(h(x,\psi(t)) - \Theta\big)$ jointly concave (Step 2).
\end{itemize}
For the sake of simplicity we set $\mu:= \frac{p}{p-1-\delta}>1$, so that $\psi(t) \sim t^{\mu}$ and
$\varphi(t) \sim t^{1/\mu}$.

\smallskip

\noindent\textbf{Step 1: monotonicity.} We have
$$\frac{\psi''(t)}{\psi'(t)} \big(h(x,\psi(t)) - \Theta\big) = C' \frac{1}{t} \left(C^{-\delta}a(x)
\big(Ct^{\mu}+\eta\big)^{-\gamma}t^{-\mu \delta}- \Theta\right),$$
and, as long as the bracket is positive (which is how $\Theta$ has been introduced), the above expression is
decreasing.

\smallskip

\noindent\textbf{Step 2: concavity.} We have
$$ t^2\frac{\psi''(t)}{\psi'(t)} \big(h(x,\psi(t)) - \Theta\big) = C' t\, h(x,\psi(t)) - \Theta C' t, $$
and, the second term being linear, it suffices to show that the first one is concave. We have
$$t\, h(x,\psi(t)) = C^{-\delta-\gamma}a(x) \big(t^{\mu}+\eta'\big)^{-\gamma}t^{-\mu \delta +1},
\qquad \eta':=\eta C^{-1}.$$
To ensure the joint concavity we rely on the Kennington-type property \cite[Proposition 2.2]{GaSq25}: if $a$ is
$\theta$-concave and $t \mapsto \big(t^{\mu}+\eta'\big)^{-\gamma}t^{1-\mu \delta }$ is $\omega$-concave, with
$\theta, \omega \in [0,+\infty]$ and $\omega^{-1}+\theta^{-1}=1$, then the product is jointly concave. We are
thus led to study the concavity of
$$\rho(t):= \big(t^{\mu}+1\big)^{-\gamma \omega }t^{(1-\mu \delta) \omega}$$
(by the arbitrariness of $\eta$, up to a change of variable it is not restrictive to take $\eta'=1$).
We need at least $1-\mu \delta \geq 0$, so that $\rho(0)\neq \infty$, that is
$\delta \leq \frac{p-1}{p+1}$; and $1-\mu\delta \geq \mu \gamma$, so that $\rho(\infty) \neq 0$, that is
$$\delta \leq \frac{p-1-p\gamma}{p+1},$$
which forces $\gamma \leq \frac{p-1}{p}$.
Moreover, in order for $\rho$ to be asymptotically concave both at $0$ and at $\infty$, we need
$(1-\mu \delta) \omega \leq 1$, that is
$$\delta \geq \frac{(p-1)(\omega-1)}{(p+1)\omega-1} = \frac{p-1}{1+p\theta},$$
which, combined with the previous restriction, imposes $\theta \geq \frac{p-1+\gamma}{p-1-p\gamma}$.
We choose the optimal $\delta = \frac{(p-1)(\omega-1)}{(p+1)\omega-1}$, i.e.\ $(1-\mu \delta) \omega = 1$,
obtaining
$$\rho(t) = (t^{\mu}+1)^{-\gamma \omega} t, \qquad \mu = \frac{\omega-1+\omega p}{(p-1)\omega}.$$

{We now check that $\rho''\leq 0$ under the above restrictions. Setting $b:=\gamma \omega>0$ and
$A(t):=t^{\mu}+1$, a direct computation gives
\begin{equation}\label{eq_rho_primo}
\rho'(t) = A(t)^{-b-1}\big(1+(1-b\mu)\,t^{\mu}\big),
\end{equation}
\begin{equation}\label{eq_rho_secondo}
\rho''(t) = -\,b\,\mu\, t^{\mu-1} A(t)^{-b-2}\Big[(\mu+1) + (1-b\mu)\,t^{\mu}\Big] .
\end{equation}
Since $b,\mu>0$ and $A>0$, we conclude that
$$\rho''\leq 0 \hbox{ on } (0,+\infty) \qquad \Longleftrightarrow \qquad b\mu=\gamma \omega \mu \leq 1 ,$$
and the latter is exactly the condition $1-\mu\delta \geq \mu\gamma$ imposed above, because the optimal choice
of $\delta$ gives $(1-\mu\delta)\omega=1$, i.e.\ $\mu \gamma \leq 1/\omega$. In other words, the restrictions
on $\gamma$ and $\theta$ in the statement are \emph{necessary and sufficient} for the concavity of $\rho$ with
this choice of $\psi$, and are therefore intrinsic to the method rather than technical.}

{
We thus obtain the $\frac{\theta(p-1)}{1+\theta p}$-concavity for the solutions $u_{\eta}$ of
\eqref{eq_gen_approx_plaplace}. Suitably applying Proposition \ref{prop_approx_general} we can weaken the regularity, and then we conclude by passing to the limit $\eta\to 0^{+}$ (via Theorem \ref{thm_rock} and \cite[Lemma 4.5 and proof of Theorem 4.6]{CST16}).
}
\end{proof}

{
\begin{remark}[Why the expected exponent is out of reach]
\label{rem_stronger_result}
In order to obtain the expected $\frac{\theta(p-1+\gamma)}{1+\theta p}$-concavity of $u_p$, %
one would need a different decomposition in \eqref{eq_decomp_fxt},
$$a(x)(t+\eta)^{-\gamma} = f(x,t) = h(x,t) g(t);$$
once a suitable $g$ is chosen, setting, as in \eqref{eq_def_varphi},
$G(t)=\int_0^t g$, $\varphi(t)=\int_{1}^t G^{-1/p}$ and $\psi=\varphi^{-1}$, the key quantity in play is
\begin{equation}\label{eq_kappa_formula}
{\mathcal{Q}}(t):=\frac{\psi''(t)}{\psi'(t)} = \frac1p\, G(\psi(t))^{\frac{1-p}{p}}\, g(\psi(t)) .
\end{equation}

{Two very different situations then occur, according to whether $a$ is constant or not.}

{\emph{The autonomous case.} If $a$ is constant, then we can choose $h\equiv 1$ and $g(t)=g_{\eta}(t)=(t+\eta)^{-\gamma}$, %
and by \cite[Remark 5.8]{GaSq25} conditions $(A_2)$, $(A_4)$ and $(B_4)$ collapse
to the single requirement}
\begin{equation}\label{eq_autonomous_condition}
{t \mapsto {\mathcal{Q}}(t) \quad \hbox{non-increasing and harmonic concave,}}
\end{equation}
{for which a sufficient condition, expressed directly on $g$, is that $G^{1/p}$ be concave and $g/G$ be
harmonic concave. For $g_{\eta}(t)=(t+\eta)^{-\gamma}$ with $\gamma \in (0,1]$ both hold, and this is exactly
how Theorem \ref{thm_GS_autonom} is proved in \cite[Theorem 6.7]{GaSq25}, observing that $G_{\eta}(t)=\int_0^t g_{\eta}$, $\varphi_{\eta}(t)=\int_{1}^t G_{\eta}^{-1/p} \sim t^{\frac{p-1+\gamma}{p}}$ as $\eta \to 0$. We stress that the quantity
$t^2{\mathcal{Q}}(t)$ plays \emph{no role} here: the factor $t^2$ is needed only to handle sums, that is, only when
an $x$-dependent weight is present.}

\emph{The non-autonomous case.} If $a$ is merely $\theta$-concave with $\theta<+\infty$, the situation is more involved \cite[Remark 6.9(ii)]{GaSq25}: condition
$(A_5)$ of \cite{GaSq25} requires the joint concavity of $(x,t)\mapsto t^2 {\mathcal{Q}}(t)\big(h(x,\psi(t))-\Theta\big)$.

To gain this, we need a suitable decomposition of $f(x,t)=h(x,t) g(t)$.

Consider first $h(x,t) \equiv a(x)$, $g(t)=(t+\eta)^{-\gamma}$ as in the autonomous case.
Condition $(A_5)$ of \cite{GaSq25} (see also (B4) therein) requires the joint concavity of $(x,t)\mapsto t^2 {\mathcal{Q}}(t)\big(a(x)-\Theta\big)$,
and by \cite[Proposition 2.2]{GaSq25} this essentially holds provided $t^2{\mathcal{Q}}(t)$ is $\omega$-concave with
$\omega=\frac{\theta}{\theta-1}>1$. This is a genuinely stronger requirement than
\eqref{eq_autonomous_condition}, and it {fails: indeed, for every $\eta\geq 0$ one has $t^2 \mathcal{Q}(t) \sim c\,t$ as $t\to +\infty$ (when $\gamma<1$; a similar argument works for $\gamma=1$),}
which is not $\omega$-concave for any $\omega>1${, since positive $\omega$-concave functions grow sublinearly}.
Notice anyway that this is not unexpected, since, if true, we would obtain a $\frac{p-1+\gamma}{p}$-concavity, which is better than the expected $\frac{\theta(p-1+\gamma)}{1+\theta p}$-concavity.

If one instead follows \eqref{eq_decomp_fxt}, as already checked in Step 2 of the proof of Theorem \ref{thm_conc_plapl_tors}, the best (i.e. smallest) choice of $\delta$ is $\delta=\frac{p-1}{1+\theta p}$, while to gain the desired concavity for $u$ one would need $\delta = \frac{p-1-\gamma p \theta}{1+\theta p}$.
{We do not know whether a different regularization of the nonlinearity, or a cleverer decomposition of $f(x,t)$, may lead to the expected concavity; see Open Problem \ref{op_optimal_concavity}.}
\end{remark}
}

\subsection{Some open problems}
\label{sec_open}

{We conclude by collecting some natural questions left open by the previous analysis.}

\begin{oproblem}[The case $\gamma>1$]
\label{op_gammamag1}
In this paper we {mainly} treated the case $\gamma\in (0,1]$, since the method due to \cite{GaSq25} imposes this restriction (see \cite[Remark 6.9(i)]{GaSq25}).

Notice however that concavity properties for the classical Laplacian, $p=2$, {have} been obtained through different methods also in the case $\gamma>1$ \cite{BGP99},
 and {it is natural to ask whether they hold for $p \neq 2$ as well}.
Anyway, for $p=1$ such concavity is now proved, as shown in Corollary \ref{cor_ext_gammamag1}.
\end{oproblem}

\begin{oproblem}[The threshold $\gamma \leq \frac{p-1}{p}$]
\label{op_threshold}
{By \eqref{eq_rho_secondo}, the restriction $\gamma \leq \frac{p-1}{p}$ of Theorem
\ref{thm_conc_plapl_tors} is {necessary} %
for the concavity of the auxiliary function $\rho$
associated with the choice $g(t)=t^{\delta}$. Is it a genuine structural threshold for the problem, or only a
limitation of this particular family of transformations? The question is not academic: since
$\frac{p-1}{p}\to0$ as $p \to 1^{+}$, the restriction is exactly what prevents one from passing to the limit
in the non-autonomous problem \eqref{eq_nonaut_1}.
If it could be removed, one would obtain, letting
$p \to 1^{+}$ in the exponent $\frac{\theta(p-1)}{1+\theta p}$ of Theorem \ref{thm_conc_plapl_tors}, that the
solution of \eqref{eq_nonaut_1} with $\gamma \in (0,1]$ is $0$-concave, i.e.\ \emph{log-concave}, and in
particular continuous{, a property which, to the best of our knowledge, is not known}. Obtaining instead the exponent $\frac{\theta\gamma}{1+\theta}$, which is the limit of
the \emph{expected} exponent $\frac{\theta(p-1+\gamma)}{1+\theta p}$, would require in addition a positive
answer to Open Problem \ref{op_optimal_concavity}.
}
\end{oproblem}

\begin{oproblem}[Optimal concavity for the singular $p$-Laplacian]
\label{op_optimal_concavity}
Can the expected $\frac{\theta(p-1+\gamma)}{1+\theta p}$-concavity be established for the solutions $u_p$
of \eqref{eq_nonaut_p}?
As explained in Remark \ref{rem_stronger_result}, the method in \cite{GaSq25} fails due to $(A_5)$, even if we do not exclude that a suitable decomposition of $f(x,t)$, or a cleverer regularization (e.g., a smooth approximation of $\max\{u,\eta\}^{-\gamma}$, which coincides with $u^{-\gamma}$ out of the collar $\{u\leq \eta\}$, observing that the assumptions in \cite{GaSq25} must be verified out of a collar, see (5.4) therein), could work. %
Consequently, a positive answer would require either a relaxation of $(A_5)$ of \cite{GaSq25} (see e.g. the more abstract $(B_4)$ therein), or a genuinely different method.
\end{oproblem}

\begin{oproblem}[Boundary behaviour of $H_{\Omega}$]
\label{op_ext_bound}
{Assume $\partial \Omega \in C^{1,1}$, so that $H_{\Omega}\in L^{\infty}(\Omega)$ by Remark
\ref{rem_ifflinf}. Does $H_{\Omega}|_{\Omega}$ extend continuously up to $\overline{\Omega}$? By Corollary
\ref{cor_var_mean_curv} the function $H_{\Omega}$ is concave and bounded in $\Omega$, hence it admits a
continuous extension along each segment; the delicate point is the uniformity near $\partial \Omega$.
By Proposition \ref{prop_no_lip}, no {uniform} H\"older modulus of continuity can be expected, the answer
being negative for every exponent $\alpha \in (0,1]$; weaker moduli, logarithmic ones for instance, are not
excluded by our examples. The question of plain continuity, however, remains open, as
does the corresponding one for $u_{|\Omega}=(-H_{\Omega})^{-1/\gamma}$. {We recall that bounded concave functions on convex domains need not admit a continuous extension to the closure when {$\Omega$ is not a polytope}, %
this property characterizing polyhedral sets {(see Proposition \ref{prop_polytope} and \cite{GKR68})}; the question thus concerns the specific structure of $H_\Omega$.}
Let us stress that, with our convention, $H_{\Omega}$ is never continuous \emph{across}
$\partial \Omega$, already for $\Omega=B_r$ (see Example \ref{ex_examp_optimal}$(i)$), so that the question has
to be phrased for the restriction of $H_{\Omega}$ to $\Omega$.
Notice that, with the alternative convention of \cite[(2.16)--(2.17)]{GoMa94}, where one sets
$H_{\Omega}:=-H_{\R^N \setminus \Omega}\geq0$ outside $\Omega$, the sign of $H_{\Omega}$ jumps across
$\partial \Omega$ anyway, so the discontinuity is not an artifact of our convention; the question is therefore
genuinely about $H_{\Omega}|_{\Omega}$ in any case.}
\end{oproblem}

\begin{oproblem}[The weighted variational mean curvature]
\label{op_weighted}
Can one relate the solutions of \eqref{eq_nonaut_1} to the \emph{weighted} canonical variational mean
curvature $H_{\Omega, a}$? We recall that the weighted construction is classical: given a positive weight
$a$, one considers the measure $\mu:=a\,\mathcal{L}^N\llcorner \Omega$ and minimizes
$\mc{F}^{\mu}_{\lambda}(E):=P(E)-\lambda\,\mu(E) = P(E) - \lambda \int_E a(x)\,dx$ among $E \subseteq \Omega$;
existence of a variational mean curvature associated with a general measure goes back to
\cite[Theorem 1]{BGT87} and \cite{Bar94}, and the corresponding minimal (canonical) choice is constructed in
\cite[Construction 2.6 and Theorem 2.7]{ScSc23}{; weighted Cheeger sets have been studied in \cite{IoLR05,CaCo07,BCC07,Sar18}}. Selecting (if not unique) a suitable $\Omega_{\lambda,a}\in %
\argmin \mc{F}^{\mu}_{\lambda}$
and $H_{\Omega,a}(x):=-\inf\{\lambda {a(x)}%
\mid x \in \Omega_{\lambda,a}\}$, one has
$\Omega_{\lambda}=\Omega_{\lambda,\chi_{\Omega}}$ and $H_{\Omega}=H_{\Omega,\chi_{\Omega}}$.

An affirmative answer would give considerably more interest to the concavity properties of
\eqref{eq_nonaut_1}: by Theorem \ref{thm_conc_plapl_tors} one %
{may hope to} obtain (probably non-optimal) concavity
information on $H_{\Omega,a}$ itself. We stress, however, that at present such information cannot be
deduced from \eqref{eq_nonaut_p} and Theorem \ref{thm_conc_plapl_tors}, because of the threshold discussed in
Open Problem \ref{op_threshold}.

\end{oproblem}

\begin{oproblem}[Beyond convexity]
In the present paper, convexity of $\Omega$ is strongly used, starting from the notion itself of harmonic convexity of ${-H_{\Omega}}$.
For a planar Jordan domain with no necks of radius $1/\lambda$ for any $\lambda \geq h(\Omega)$ %
(in the sense of \cite[Definition 1.1]{LeSa20}), formula \eqref{eq_formula_H_planar} does survive,
since one still has $\Omega_{\lambda}=A_{1/\lambda}$, up to replacing
$\Omega_{\lambda}$ by the maximal minimizer \cite[Theorem 2.3]{LeSa20}; hence, $H_{\Omega}$ remains completely
described by the purely geometric function $r_{\Omega}$ of \eqref{eq_def_rOmega}.
For non-convex domains, it remains open to study geometric properties of pre-images of sublevel sets of $-H_{\Omega}$, and to study regularity properties of $-H_{\Omega}$.
\end{oproblem}

\appendix

\section{Examples} %
\label{sec_examples}

\noindent In this appendix we collect the explicit computations of $H_{\Omega}$ that have been used
throughout the paper. All of them rest on the planar structure result of Lemma \ref{lem_planar_structure} and
on the ensuing formula \eqref{eq_formula_H_planar}, so that in each case only the geometric quantity
$r_{\Omega}$ of \eqref{eq_def_rOmega} has to be computed. Examples \ref{ex_examp_optimal}$(i)$ and $(ii)$ are
used in Sections \ref{sec_intro}--\ref{sec_cheeg}, Example \ref{ex_examp_optimal}$(iii)$ provides the
optimality in Corollary \ref{cor_var_mean_curv}, and Examples \ref{ex_examp_optimal}$(iii)$--$(v)$ are the
ones behind Proposition \ref{prop_no_lip}.

\begin{example}\label{ex_examp_optimal}
We collect here some explicit computations; examples $(i)$, $(iii)$ and $(iv)$ are taken from \cite{GoMa94}.
\begin{itemize}[leftmargin=*]
\item[$(i)$]
{Let $\Omega = B_r \subset \R^N$ be an open ball. Then
$$H_{\Omega} \equiv - \frac{N}{r} = -h(B_r) \qquad \hbox{in } B_r .$$
Indeed, one may restrict \eqref{eq_min_Omegalamda} to the balls $B_s \subseteq B_r$: if $E \subseteq B_r$
and $B_s$ denotes the ball with $|B_s|=|E|$, then $s \leq r$, so that $B_s$ is (up to a translation) an
admissible competitor, and the isoperimetric inequality gives $P(E)\geq P(B_s)$, whence
$P(E)-\lambda|E| \geq P(B_s)-\lambda|B_s|$. Testing with $B_s$, $s \leq r$, gives
$P(B_s)-\lambda|B_s| = \omega_N s^{N-1}(N-\lambda s)$, which is negative if and only if $\lambda>N/s$; hence
$\Omega_{\lambda}=\emptyset$ for $\lambda<N/r$ and $\Omega_{\lambda}=B_r$ for $\lambda \geq N/r$, and
\eqref{eq_def_Homega} gives the claim. In particular $B_r$ is self-Cheeger, with
$h(B_r)=P(B_r)/|B_r| = N/r$, consistently with \eqref{eq_H_const_on_Cheeger}.

Note that $N/r$ is \emph{not} the classical mean curvature $(N-1)/r$ of $\partial B_r$: as explained in the
Introduction, $H_{\Omega}$ is the canonical variational mean curvature, and it is discontinuous across
$\partial \Omega$, so that \cite[Remark 2.1]{GoMa94} does not apply to it. Correspondingly, by
\eqref{eq_sol_eq_Hom1} the solution of \eqref{eq_gen_1laplace_a1} on a ball is the constant
$u \equiv (r/N)^{1/\gamma}$, which does not vanish on $\partial B_r$: this is the elementary example behind
Remark \ref{rem_boundary_datum}.}

\item[$(ii)$] Let $\Omega$ be the convex hull in $\R^2$ of $B_r(0)$ and a point $P$ outside it, at distance
$d>r$ from the origin, and let $2\beta \in (0,\pi)$ be the angle of $\Omega$ at the vertex $P$, so that
$\sin \beta = r/d$.
Recall, for
$\rho \in (0, r]$, $A_{\rho}=\Omega^{\rho}\oplus B_{\rho}$ {(see \eqref{eq_def_Arho})}, %
that is $\Omega$ with the corner at $P$ rounded off at radius $\rho$. {Then, {recalling that} $R_{\Omega}=1/h(\Omega)$,
\begin{equation}\label{eq_example_hull}
H_{\Omega}(x) =
\begin{cases}
- h(\Omega) & \quad \hbox{for $x \in 
 {\Omega \cap \overline{A_{R_{\Omega}}}}$} \\[3pt]
- \dfrac{1}{\rho} & \quad \hbox{for $x \in \Omega \cap \partial A_{\rho}$, $\rho \in (0,R_{\Omega})$,}
\end{cases}
\qquad
h(\Omega) = \frac{1}{r} + \sqrt{\frac{\pi}{|\Omega|}} \, \in \Big(\frac{1}{r}, \frac{2}{r}\Big),
\end{equation}
and $A_{R_{\Omega}}$ is the (unique) Cheeger set of $\Omega$.

Let us justify \eqref{eq_example_hull}, since the computation is elementary and can be used for the optimality
in Corollary \ref{cor_var_mean_curv}. Set $c:=\cot \beta - \frac{\pi}{2}+\beta>0$. Elementary trigonometry
gives, for every $\rho \in (0,r]$,
\begin{equation}\label{eq_example_hull_geom}
P(A_{\rho}) = P(\Omega)-2c\,\rho , \qquad |A_{\rho}| = |\Omega| - c\,\rho^{2},
\end{equation}
so that
$$P(A_{\rho}) - \lambda |A_{\rho}| = \big(P(\Omega)-\lambda|\Omega|\big) - 2c\rho + \lambda c \rho^{2},
\qquad \frac{d}{d\rho}\Big[P(A_{\rho}) - \lambda |A_{\rho}|\Big] = 2c\,(\lambda \rho - 1) .$$
Since the coefficient of $\rho^{2}$ is positive, the minimum over this family is attained at
$\rho=1/\lambda$; this is of course nothing but the Euler--Lagrange condition, the free boundary of
$\Omega_{\lambda}$ having constant curvature $\lambda$, hence radius $1/\lambda$.

The minimization over the sole family $\{A_{\rho}\}$ is not restrictive: by Lemma
\ref{lem_planar_structure} one has $\Omega_{\lambda}=A_{1/\lambda}$ for every $\lambda \geq h(\Omega)$, and
$\Omega_{\lambda}=\emptyset$ otherwise, which gives the second line of \eqref{eq_example_hull}.
The threshold is computed through {\eqref{eq_Omegalambda_planar} and \eqref{eq_inner_cheeger}}:
$A_{R_{\Omega}}=\Omega^{R_{\Omega}}\oplus B_{R_{\Omega}}$, where $R_{\Omega}$ is determined by $|\Omega^{R_{\Omega}}|=\pi R_{\Omega}^{2}$, and
$h(\Omega)=1/R_{\Omega}$. Here $\Omega^{\rho}=(1-\rho/r)\,\Omega$ by homothety, so that $(1-R_{\Omega}/r)^{2}|\Omega|=\pi R_{\Omega}^{2}$,
which gives the value of $h(\Omega)$ in \eqref{eq_example_hull}; since $|\Omega|>\pi r^{2}$ one has
$\frac1r < h(\Omega) < \frac{2}{r}$.%

Consequently $H_{\Omega}$ is non-constant, $H_{\Omega}(x)\to -\infty$ as $x \to P$, and
$\partial \Omega \notin C^{1,1}$ at $P$, in accordance with Remark \ref{rem_ifflinf}. Moreover $H_{\Omega}$ is
constant, equal to $-h(\Omega)$, exactly on the Cheeger set of $\Omega$, as predicted by
\eqref{eq_H_const_on_Cheeger}.

Finally, let us record the behaviour along the axis, which is what yields the optimality in Corollary
\ref{cor_var_mean_curv}. If $x$ lies on the segment $\overline{OP}$ at distance $t$ from the origin, the
inscribed disk of radius $\rho$ tangent to both sides of the corner has center at distance $\rho/\sin\beta$
from $P$, hence $d-t = \rho\,\frac{d-r}{r}$, that is $\rho = \frac{r(d-t)}{d-r}$. Therefore
\begin{equation}\label{eq_example_axis}
-H_{\Omega}(t) = \frac{d-r}{r\,(d-t)} \qquad \hbox{for } t \in (t_{R_{\Omega}},d), \qquad t_{R_{\Omega}}:= d - R_{\Omega}\,\frac{d-r}{r},
\end{equation}
so that $t \mapsto \big(-H_{\Omega}(t)\big)^{-1}$ is \emph{affine} on $(t_{R_{\Omega}},d)$: the harmonic convexity of
Corollary \ref{cor_var_mean_curv} holds here with equality.}
\begin{figure}[htbp]
\centering
\begin{tikzpicture}[scale=0.9]

\pgfmathsetmacro{\r}{1.5}
\pgfmathsetmacro{\d}{4}
\pgfmathsetmacro{\ROmega}{0.814165}
\pgfmathsetmacro{\tROmega}{2.643059}
\pgfmathsetmacro{\cxx}{1.828894}
\pgfmathsetmacro{\bet}{asin(\r/\d)}
\pgfmathsetmacro{\ang}{90-\bet}
\pgfmathsetmacro{\tx}{\r*\r/\d}
\pgfmathsetmacro{\ty}{\r*sqrt(1-(\r*\r)/(\d*\d))}
\pgfmathsetmacro{\tsam}{3.2}
\pgfmathsetmacro{\rhosam}{\r*(\d-\tsam)/(\d-\r)}
\pgfmathsetmacro{\csam}{\d-\rhosam*\d/\r}

\begin{scope}
\coordinate (O) at (0,0);
\coordinate (P) at (\d,0);
\coordinate (T1) at (\tx,\ty);

\fill[green!12]
 (T1) arc[start angle=\ang, end angle={360-\ang}, radius=\r] -- (P) -- cycle;
\draw[thick, green!55!black]
 (T1) arc[start angle=\ang, end angle={360-\ang}, radius=\r] -- (P) -- cycle;

\fill[blue!15]
 (T1) arc[start angle=\ang, end angle={360-\ang}, radius=\r]
 -- ({\cxx+\ROmega*cos(-\ang)},{\ROmega*sin(-\ang)})
 arc[start angle=-\ang, end angle=\ang, radius=\ROmega] -- cycle;
\draw[thick, blue!60!black]
 (T1) arc[start angle=\ang, end angle={360-\ang}, radius=\r]
 -- ({\cxx+\ROmega*cos(-\ang)},{\ROmega*sin(-\ang)})
 arc[start angle=-\ang, end angle=\ang, radius=\ROmega] -- cycle;
\node[blue!60!black, font=\small] at (0.55,0.62) {$A_{R_\Omega}$};

\draw[red!70!black, thick, dashed] (\csam,0) circle (\rhosam);
\draw[red!70!black, thin] 
 (\csam,0) -- ({\csam+\rhosam*cos(235)},{\rhosam*sin(235)});
\node[red!70!black, font=\tiny, anchor=north east]
 at ({\csam+0.56*\rhosam*cos(235)},{0.56*\rhosam*sin(235)})
 {$r_\Omega(x)$};

\draw[thin, gray] (O) -- (P);
\fill[red] (\tsam,0) circle (1.7pt);
\node[above, font=\small] at (\tsam,-0.01) {$x$};

\node[circle,fill,inner sep=1pt,label=below left:$O$] at (O) {};
\node[circle,fill,inner sep=1pt,label=right:$P$] at (P) {};
\node[font=\small] at (1.6,-2.15) {$\Omega=\mathrm{conv}(B_r\cup\{P\})$};
\end{scope}

\begin{scope}[shift={(7.2,0.45)}]
\begin{axis}[
 width=6.6cm, height=3.5cm,
 xlabel={$t$}, ylabel={$H_\Omega(t,0)$},
 xmin=0, xmax=4.5, ymin=-4.8, ymax=1.1,
 axis lines=middle,
 xtick={1.5,2.643059,4}, xticklabels={,,},
 ytick={-1.228253}, yticklabels={$-h(\Omega)$},
 samples=300,
 label style={font=\small}, tick label style={font=\tiny},
 xlabel style={at={(ticklabel* cs:1.0)},anchor=west},
 ylabel style={at={(ticklabel* cs:1.0)},anchor=south east}
]
\addplot[ultra thick,blue,domain=0:2.643059] {-1.228253};
\addplot[ultra thick,blue,domain=2.643059:3.72]
 {-(4-1.5)/(1.5*(4-x))};

\node[font=\tiny,anchor=south]
 at (axis cs:1.5,0.05) {$r$};
\node[font=\tiny,anchor=south]
 at (axis cs:2.643059,0.05) {$t_{R_\Omega}$};
\node[font=\tiny,anchor=south]
 at (axis cs:4,0.05) {$d$};

\draw[dashed,gray]
 (axis cs:4,-4.8) -- (axis cs:4,-0.04);
\draw[dotted,gray]
 (axis cs:2.643059,-4.8) -- (axis cs:2.643059,-0.04);

\end{axis}
\end{scope}

\begin{scope}[shift={(7.2,-2.85)}]
\begin{axis}[
 width=6.6cm, height=3.5cm,
 xlabel={$t$}, ylabel={$\big(-H_\Omega(t,0)\big)^{-1}$},
 xmin=0, xmax=4.5, ymin=0, ymax=1.35,
 axis lines=middle,
 xtick={1.5,2.643059,4}, xticklabels={$r$,$t_{R_\Omega}$,$d$},
 ytick={0.814165}, yticklabels={$R_\Omega$},
 samples=2,
 label style={font=\small}, tick label style={font=\tiny},
 xlabel style={at={(ticklabel* cs:1.0)},anchor=west},
 ylabel style={at={(ticklabel* cs:1.0)},anchor=south east}
]
\addplot[ultra thick,blue,domain=0:2.643059] {0.814165};
\addplot[ultra thick,blue,domain=2.643059:4] {0.6*(4-x)};
\draw[dotted,gray] (axis cs:2.643059,0) -- (axis cs:2.643059,0.814165);
\node[font=\tiny,anchor=north] at (axis cs:3.30,0.30) {$r_\Omega$};
\end{axis}
\end{scope}

\end{tikzpicture}
\caption{{\small Left: the domain $\Omega$, its Cheeger set $A_{R_\Omega}$, and the largest disk contained in
$\Omega$ through a point $x$ of the axis, of radius $r_{\Omega}(x)$ as in \eqref{eq_def_rOmega}. Right: the
graphs of $H_{\Omega}$ and of $\big(-H_{\Omega}\big)^{-1}=\min\{R_{\Omega},r_{\Omega}\}$ along $\overline{OP}$,
parametrized by the distance $t$ from $O$. Since $H_{\Omega}$ is a branch of a hyperbola with vertical
asymptote at $t=d$, its reciprocal is affine.
}}
\label{fig_hull}
\end{figure}

\item[$(iii)$] \emph{(The square).}
Let $\Omega=(0,\ell)^2 \subset \R^2$ be the open square of side $\ell$.
Since $\Omega^{\rho}$ is the square of side $\ell-2\rho$, the inner Cheeger formula \eqref{eq_inner_cheeger} $|\Omega^{R_{\Omega}}|=\pi R_{\Omega}^2$ %
gives $(\ell-2R_{\Omega})^2=\pi R_{\Omega}^2$, whence
\begin{equation}\label{eq_cheeger_square}
h(\Omega)=\frac1{R_{\Omega}}=\frac{2+\sqrt{\pi}}{\ell},
\end{equation}
and the Cheeger set is $A_{R_{\Omega}}$, i.e.\ the square with the four corners rounded off at radius $R_{\Omega}$.

By Lemma \ref{lem_explicit_Homega} %
we can compute $H_{\Omega}$ explicitly.
Let $x \in \Omega$ at distances $d_1,d_2$ from the two sides meeting at the (nearest) vertex, which we
place at the origin (see Figure \ref{fig_square_corner}).
Let $\rho=r_{\Omega}(x)$ be the maximal radius, $x \in \overline{B_{\rho}(c)}$, $B_{\rho}(c) \subset \Omega$. We show $\rho=d_1+d_2+\sqrt{2d_1d_2}=:\bar{d}$: first, it is easy to show that $x \in \partial B_{\bar{d}}((\bar{d},\bar{d}))$ if $\bar{d}<\ell/2$, to which we restrict from now on (otherwise, $r_{\Omega}(x)=\ell/2$); %
thus $\rho \geq \bar{d}$. Observed that $\bar{d} \geq \max\{d_1,d_2\}$, then if $\rho \leq \max\{d_1,d_2\}$ then $\rho \leq \bar{d}$ and the claim follows. Thus we can assume $d_i< \rho$. Since $B_{\rho}(c) \subset \Omega$, we clearly have $c_i \geq \rho$. Hence, being $c_i \geq \rho > d_i$, we obtain $(\rho-d_1)^2+(\rho-d_2)^2\leq (c_1-d_1)^2+(c_2-d_2)^2 \leq \rho^2$. Solving we obtain $\rho \leq \bar{d}$ and hence the claim. %

Hence, noticed that $R_{\Omega}<\ell/2$ (thus coherent with our above restriction), by \eqref{eq_formula_H_planar},
\begin{equation}\label{eq_H_square}
H_{\Omega}(x) = - \frac{1}{d_1+d_2+\sqrt{2 d_1 d_2}} \qquad \hbox{whenever } d_1+d_2+\sqrt{2d_1d_2}<R_{\Omega} .
\end{equation}
Three consequences are worth recording.
\begin{itemize}[leftmargin=*]
\item[$\bullet$] $H_{\Omega}(x)\to-\infty$ as $(d_1,d_2)\to(0,0)$, in accordance with Remark
\ref{rem_ifflinf}, since the square is not of class $C^{1,1}$.
\item[$\bullet$] Along the diagonal $d_1=d_2=d$ one has $\big(-H_{\Omega}\big)^{-1}=(2+\sqrt2)\,d$, which is
\emph{affine}: the square provides therefore an %
instance of optimality of
the exponent $-1$ in Corollary \ref{cor_var_mean_curv}.
\item[$\bullet$] Letting instead $d_1 \to 0^+$ with $d_2<R_{\Omega}$ fixed, that is approaching a \emph{flat} portion
of $\partial \Omega$ at distance less than $R_{\Omega}$ from a vertex, we get
$\big(-H_{\Omega}\big)^{-1} = d_2+\sqrt{2d_2}\,\sqrt{d_1}+O(d_1)$, whose derivative in $d_1$ blows up. Hence
$\big(-H_{\Omega}\big)^{-1}$ admits at best a $C^{0,1/2}$ extension up to $\partial \Omega$, and no Lipschitz
extension: see Proposition \ref{prop_no_lip} below.
\end{itemize}

\begin{figure}[htbp]
\centering
\begin{tikzpicture}

\begin{scope}[scale=3.1]
\pgfmathsetmacro{\RR}{0.795238} 
\pgfmathsetmacro{\da}{0.15} 
\pgfmathsetmacro{\db}{0.25} 
\pgfmathsetmacro{\rh}{0.673861} 

\fill[green!10] (0,0) rectangle (1.42,1.42);

\fill[orange!18] (0,0) -- (\RR,0)
 arc[start angle=270, end angle=180, radius=\RR] -- cycle;

\draw[thick,green!55!black] (0,1.42) -- (0,0) -- (1.42,0);
\draw[thick,blue!60!black] (\RR,0) arc[start angle=270, end angle=180, radius=\RR];
\node[blue!60!black,font=\tiny,anchor=west] at (0.47,0.11) {$\partial A_{R_{\Omega}}$};

\draw[gray,dashed,thin] (0,0) -- (1.38,1.38);
\node[gray,font=\tiny,anchor=south east,rotate=45] at (1.10,1.10) {$d_1=d_2$};

\draw[red!70!black,thick,dashed] (\rh,\rh) circle (\rh);
\fill[red!70!black] (\rh,\rh) circle (0.6pt);
\draw[red!70!black,->,>=stealth] (\rh,\rh) -- (\da,\db);
\node[red!70!black,font=\tiny,anchor=north west] at (0.16,0.59) {$r_\Omega(x)$};

\fill[red] (\da,\db) circle (0.8pt);
\node[font=\tiny,anchor=south east] at ({\da+0.01},{\db-0.09}) {$x$};

\draw[dotted] (\da,\db) -- (\da,-0.16);
\draw[dotted] (\da,\db) -- (-0.16,\db);
\draw[<->,>=stealth,thin] (0,-0.11) -- (\da,-0.11);
\node[font=\tiny,anchor=north] at ({\da/2},-0.12) {$d_1$};
\draw[<->,>=stealth,thin] (-0.11,0) -- (-0.11,\db);
\node[font=\tiny,anchor=east] at (-0.12,{\db/2}) {$d_2$};
\end{scope}

\begin{scope}[shift={(5.9,0.15)}]
\begin{axis}[
 width=6.9cm, height=5.2cm,
 xlabel={$\delta=\dist(\cdot,\partial\Omega)$},
 ylabel={$\big(-H_\Omega\big)^{-1}$},
 xmin=0, xmax=0.36, ymin=0, ymax=0.98,
 axis lines=left,
 xtick={0,0.1,0.2,0.3}, ytick={0.3,0.795238},
 yticklabels={$d_2^{*}$,$R_{\Omega}$},
 samples=250,
 label style={font=\small}, tick label style={font=\tiny}
]
\draw[dotted,gray] (axis cs:0,0.795238) -- (axis cs:0.36,0.795238);
\addplot[ultra thick,blue,domain=0:0.36] {min(0.795238,3.414214*x)};
\addplot[ultra thick,red!75!black,domain=0:0.36] {min(0.795238,x+0.3+sqrt(0.6*x))};
\node[blue,font=\tiny,anchor=north west] at (axis cs:0.155,0.57) {along $d_1=d_2$};
\node[red!75!black,font=\tiny,anchor=north west] at (axis cs:0.008,0.81)
 {at fixed $d_2=d_2^{*}$};
\end{axis}
\end{scope}

\end{tikzpicture}
\caption{{\small The square, $\ell=3$. Left: near a vertex the largest disk through $x=(d_1,d_2)$ is
centred at $(\rho,\rho)$, with $\rho=r_{\Omega}(x)=d_1+d_2+\sqrt{2d_1d_2}$; the shaded corner region is
$\Omega\setminus A_{R_{\Omega}}$, where $H_{\Omega}$ is non-constant. Right: the profiles of
$\big(-H_{\Omega}\big)^{-1}$ against the distance $\delta$ from $\partial\Omega$. Along the diagonal the
profile is affine, whence the optimality of the exponent $-1$; approaching a flat side at fixed
$d_2=d_2^{*}<R_{\Omega}$ it has vertical tangent, which rules out any Lipschitz extension.}}
\label{fig_square_corner}
\end{figure}

\item[$(iv)$] \emph{(The elongated ellipse.)}
Let $\Omega=\{x^2/a^2+y^2/b^2<1\}$ with $a>b>0$, and let
$e=\sqrt{1-b^2/a^2}$ be its eccentricity. The maximal curvature of $\partial \Omega$ is
$\bar \kappa = a/b^2$, attained at $(\pm a,0)$,
so that by \eqref{kappa_2} the ellipse is calibrable if and only
if $\bar \kappa\,|\Omega| \leq P(\Omega)$,
 i.e.\ $\pi a/b \leq 4 E(e)$, {where
$E(k)=\int_0^{\pi/2}\sqrt{1-k^2\sin^2\theta}\,d\theta$}{.}
The perimeter is not elementary, but a numerical solution of the resulting
equation gives the critical eccentricity
\begin{equation}\label{eq_critical_ecc}
e^{*}\approx 0.79117 ,
\end{equation}
see \cite[p.~107]{KaLa06} and \cite[Section 5]{GeHu23}: for $e>e^{*}$ the ellipse is \emph{not} calibrable,
and $H_{\Omega}$ is non-constant by \eqref{eq_H_const_on_Cheeger}, although $H_{\Omega}\in L^{\infty}(\Omega)$
by Remark \ref{rem_ifflinf}. This is thus a \emph{smooth} domain for which Corollary
\ref{cor_var_mean_curv} is not vacuous.

Assume $e>e^{*}$ and let us discuss $H_{\Omega}$ in a portion of $\Omega \setminus K$, namely close to the point of maximum curvature. Indeed, by \eqref{eq_formula_H_planar} we have $-H_{\Omega}=\max\{h(\Omega),1/r_{\Omega}\}$; once shown
\begin{equation}\label{eq_romega_ell}
r_{\Omega}(t,0)\to b^2/a=1/\bar\kappa \quad \hbox{as $t \to a^-$},
\end{equation}
being $\Omega$ not calibrable, by \eqref{kappa_3} we have $r_{\Omega}(t,0)<1/h(\Omega)$ for every $t$ close to $a$, thus here $-H_{\Omega} = 1/r_{\Omega} \neq h(\Omega)$.
We compute now $r_{\Omega}$, and in particular \eqref{eq_romega_ell}.
Writing the upper half of $\partial \Omega$ as $x_1=w(y)$, $w(y)=a\sqrt{1-y^2/b^2}$, and imposing that the
disk be tangent at $P=(w(y),y)$,
its centre lies on the normal line at $P$, of direction
$\nu=(-1,w'(y))$ (see Figure \ref{fig_ellipse_constr});
note that, by symmetry, the centre may be assumed to lie on the major axis: indeed, if a disk
$B_{\rho}(c)\subseteq \Omega$ contains $x=(t,0)$, so does its mirror image $B_{\rho}(\bar c)$ with respect to
the axis, and the Minkowski combination $\frac12 B_{\rho}(c)\oplus\frac12 B_{\rho}(\bar c)=B_{\rho}
\big(\frac{c+\bar c}{2}\big)$ is again contained in $\Omega$ by convexity, has the same radius, and still
contains $x$, its centre being on the axis.
{Let $\delta(q):=\dist((q,0),\partial\Omega)$ for $|q|<a$. Then $r_\Omega(t,0)=\max\{\delta(q)\mid |t-q|\le\delta(q)\}$, and for $t>0$ centres with $q<0$ can be discarded (if $q<0$ is admissible, so is $-q$, with the same radius). Being $\Omega$ convex and symmetric, $\delta$ is concave and even, hence non-increasing on $[0,a)$, while $q\mapsto q+\delta(q)$ is non-decreasing, $\delta$ being $1$-Lipschitz. Therefore, for $t\in[\delta(0),a)$,
\begin{equation}\label{eq_rOmega_symm}
r_\Omega(t,0)=\delta(q_t)=t-q_t,\qquad q_t:=\min\{q\ge0\mid q+\delta(q)\ge t\},
\end{equation}
and the corresponding disk is contained in $\Omega$. For the ellipse, writing $c^2:=a^2-b^2$, one has $\delta(q)=b\sqrt{1-q^2/c^2}$ for $0\le q\le c^2/a$ and $\delta(q)=a-q$ for $q\geq c^2/a$; since $q+\delta(q)=a$ at $q=c^2/a$, for $t\in[b,a)$ the point $q_t$ is the unique solution in $[0,c^2/a)$ of $q+b\sqrt{1-q^2/c^2}=t$, and the disk tangent at $P$ described below is the largest one. }
Imposing that the second component of $Q=P+\theta \nu$ vanishes, we obtain $\theta=-y/w'(y)$ and hence %
$Q=\Big(w(y)+\frac{y}{w'(y)},\,0\Big)=\Big(\frac{a^2-b^2}{ab}\sqrt{b^2-y^2},\,0\Big),$
and
$
 \rho=|P-Q|=%
 \frac{\sqrt{b^4+(a^2-b^2)y^2}}{a} .$
Thus we can express $y$ in terms of $\rho$, substitute it in $Q$ and solve %
$t=x_Q+\rho$ obtaining $\rho=r_{\Omega}(t,0)$, that is
 $$r_{\Omega}(t,0)=\frac{b^2t+b\sqrt{(a^2-b^2)(a^2-t^2)}}{a^2},$$
 and therefore \eqref{eq_romega_ell} together with
 \begin{equation}\label{eq_H_ellipse}
H_{\Omega}(t,0) = -\frac{a^2}{b^2 t+b\sqrt{(a^2-b^2)(a^2-t^2)}} \quad \hbox{for $t \approx a$}.
\end{equation}
{Note that $H_{\Omega}(t,0)\to -a/b^2=-\bar \kappa$ as $t\to a^-$: the limit of
$-H_{\Omega}$ at the vertex is the classical curvature there, in accordance with \eqref{eq_magic_coincid};
we notice that
it is generally false for other points of the boundary, %
{e.g.\ along the minor axis one has $r_\Omega(0,y)\geq b\geq R_\Omega$ (the inscribed disk $B_b(0)$ contains these points, and $R_\Omega\le b$ since $\Omega^{R_\Omega}\neq\emptyset$ by \eqref{eq_inner_cheeger}), so that $-H_\Omega(0,y)=h(\Omega)\geq1/b$ for $|y|<b$, i.e.\ $(0,\pm b)\in\partial K$, whereas the classical curvature at $(0,\pm b)$ is $b/a^2<1/b$.}

Moreover
$$\big(-H_{\Omega}(t,0)\big)^{-1} = \frac{b^2}{a}+\frac{b\sqrt{2a(a^2-b^2)}}{a^2}\,\sqrt{a-t}+O(a-t)
\qquad \hbox{as } t \to a^-,$$
so that again no Lipschitz extension up to $\partial \Omega$ is possible, and $C^{0,1/2}$ is the best one can
hope for. %
}

\begin{figure}[htbp]
\centering
\begin{tikzpicture}[scale=1.75]

\pgfmathsetmacro{\aa}{2.6}
\pgfmathsetmacro{\bb}{1}

\pgfmathsetmacro{\Px}{2.251666}
\pgfmathsetmacro{\Py}{0.5}
\pgfmathsetmacro{\Qx}{1.918579}
\pgfmathsetmacro{\rh}{0.600788}
\pgfmathsetmacro{\tt}{2.519368}

\pgfmathsetmacro{\Ox}{2.215385}
\pgfmathsetmacro{\ro}{0.384615}

\fill[green!10]
 (0,-\bb)
 arc[
 start angle=-90,
 end angle=90,
 x radius=\aa,
 y radius=\bb
 ]
 -- cycle;

\draw[thick,green!55!black]
 (0,-\bb)
 arc[
 start angle=-90,
 end angle=90,
 x radius=\aa,
 y radius=\bb
 ];

\draw[->,thin]
 (-0.06,0) -- ({\aa+0.09},0);

\draw[->,thin]
 (0,-1.04) -- (0,1.06);

\draw[gray,dotted,thick]
 (\Ox,0) circle (\ro);

\pgfmathsetmacro{\rang}{245}
\coordinate (Orad) at
 ({\Ox+\ro*cos(\rang)},
 {\ro*sin(\rang)});

\draw[gray,thin]
 (\Ox,0) -- (Orad);

\node[
 gray,
 font=\fontsize{4.4}{4.8}\selectfont,
 anchor=center
]
 at (2.235,-0.225)
 {$\frac{b^2}{a}$};

\draw[red!70!black,thick,dashed]
 (\Qx,0) circle (\rh);

\fill[red!70!black]
 (\Qx,0) circle (0.6pt);

\draw[red!70!black,thick]
 (\Qx,0) -- (\Px,\Py);

\node[
 red!70!black,
 font=\tiny,
 anchor=west
]
 at (2.05,0.235)
 {$\rho$};

\draw[
 red!70!black,
 thin,
 dash pattern=on 1pt off 1.5pt
]
 (\Qx,0)
 --
 ($(\Qx,0)!1.45!(\Px,\Py)$);

\node[
 red!70!black,
 font=\tiny,
 anchor=south west
]
 at (2.37,0.7)
 {$\nu$};

\fill[red!70!black]
 (\Px,\Py) circle (0.6pt);

\node[
 font=\tiny,
 anchor=south west
]
 at (2.155,0.485)
 {$P=(w(y),y)$};

\fill[red]
 (\tt,0) circle (0.7pt);

\node[
 font=\tiny,
 anchor=south east
]
 at (2.56,-0.025)
 {$x$};

\node[
 font=\tiny,
 anchor=north
]
 at (\Qx,+0.01)
 {$Q$};

\fill[black]
 (\aa,0) circle (0.6pt);

\node[
 font=\tiny,
 anchor=north west
]
 at ({\aa-0.07},+0.022)
 {$(a,0)$};

\draw[dotted]
 (0.02,\Py) -- (\Px,\Py);

\node[
 font=\tiny,
 anchor=south
]
 at (0.84,0.56)
 {$y$};

\end{tikzpicture}

\caption{{\small
The elongated ellipse, $a=2.6$, $b=1$, which is not calibrable.
The largest disk through $x=(t,0)$ is tangent to $\partial\Omega$
at $P=(w(y),y)$, its centre $Q$ being the intersection with the axis
of the normal line at $P$, and
$r_{\Omega}(t,0)=\rho=|P-Q|$.
As $y\to0$ the disk tends to the osculating circle at the vertex
(dotted), of radius $b^2/a$.}}
\label{fig_ellipse_constr}
\end{figure}

Let us point out a key difference between $(iii)$ and $(iv)$: in both cases Lipschitz regularity is lost while
approaching the boundary, but in the square it is lost near a \emph{flat} portion of $\partial \Omega$ (and
\emph{not} along the diagonal towards the vertex, where $(-H_{\Omega})^{-1}$ is affine), whereas in the
ellipse it is lost near the point of \emph{maximal curvature}. This shows that the boundary behaviour of
$H_{\Omega}$ is a global phenomenon, and not a local one.

\item[$(v)$] \emph{(No uniform H\"older exponent.)}
Example $(iv)$ can be perturbed so as to destroy any uniform
H\"older regularity up to the boundary.
For $m \in (0,2)$ set
$$\Phi_m(\sigma):=\sigma^2-|\sigma|^{m+2}+|\sigma|^{2m+2}, \qquad
\Omega_m:=\Big\{(x,y)\in \R^2 \ \Big|\ F_m(x,y):=\frac{x^2}{a^2}+\Phi_m\Big(\frac{y}{b}\Big)-1<0\Big\} .$$
The correction $-|\sigma|^{m+2}$ is what produces the effect, while $|\sigma|^{2m+2}$ guarantees that
$\Omega_m$ is well defined and convex.
Indeed, setting $t:=|\sigma|^m$,
$$\Phi_m''(\sigma)=2-(m+1)(m+2)\,t+(2m+1)(2m+2)\,t^2,$$
whose discriminant $(m+1)(m^3+5m^2-24m-12)$ is negative for $m\in(0,2)$; hence $\Phi_m$ is strictly convex,
and so it is $F_m$, %
thus its sublevel sets, $\Omega_m$ in particular, are convex.
Moreover $F_m \in C^2$ and $\nabla F_m \neq (0,0)$ on $F_m=0$, thus $\partial \Omega_m \in C^2$ by the implicit function theorem,
so that
$H_{\Omega_m}\in L^{\infty}(\Omega_m)$ by Remark \ref{rem_ifflinf}.

Notice that, formally, $\Omega_0$ is the ellipse. Moreover, since $\Phi_m(\sigma) \geq 1$ for $|\sigma|\geq 1$, while $\Phi_m(\sigma) \leq \sigma^2$ for $|\sigma|\leq 1$, we have $\Omega_0\subset \Omega_m$.

We consider as in $(iv)$ the function $w_m(y)=a\sqrt{1-\Phi_m(y/b)}$; since $w_m(y)=a-\frac{a}{2b^2}y^2+O(y^{m+2})$, %
and since $m>0$ the remainder does not contribute to $w_m''(0)$: the curvature of $\partial \Omega_m$ at the vertex is therefore $a/b^2$, exactly as for the ellipse.
In particular, by monotonicity of the Cheeger constant, $h(\Omega_m) \leq h(\Omega_0)$, and for $e>e^*$ we have $h(\Omega_m) \leq h(\Omega_0) < \bar{\kappa}(\Omega_0) = a/b^2 \leq \bar{\kappa}(\Omega_m)$, thus $\Omega_m$ is not calibrable.

{The argument leading to \eqref{eq_rOmega_symm} applies to $\Omega_m$, which is convex and symmetric; let $\delta_m(q):=\dist((q,0),\partial\Omega_m)$ and $q_0:=a-b^2/a$. The osculating disk of the ellipse at $(a,0)$, namely $B_{b^2/a}((q_0,0))$, is contained in $\Omega_0\subset\Omega_m$, and its closure meets $\partial\Omega_m$ only at $(a,0)$; hence $\delta_m(q)=a-q$ for $q\in[q_0,a)$. On the other hand $\delta_m(q)<a-q$ for $q<q_0$, since the curvature of $\partial\Omega_m$ at $(a,0)$ is $a/b^2$ and therefore the vertex is not even a local minimum of the distance from $(q,0)$. By \eqref{eq_rOmega_symm}, for $t<a$ close to $a$ we thus have $r_{\Omega_m}(t,0)=\delta_m(q_t)$ with $q_t<q_0$, $q_t\to q_0$; by compactness, every nearest point $P_t$ of $(q_t,0)$ on $\partial\Omega_m$ converges to $(a,0)$, with $P_t\neq(a,0)$. Writing (up to symmetry) $P_t=(w_m(y),y)$ with $y=y_t\to0^+$, the segment joining $(q_t,0)$ and $P_t$ is normal to $\partial\Omega_m$, so that the relations $t=x_Q+\rho$, $\rho=|P-Q|$ of $(iv)$ hold along $y=y_t$, and the asymptotic analysis below is performed along $y_t\to0^+$.} %

Then, {unlike} the ellipse, we do not make direct computations but we perform an asymptotic analysis as
$y \to 0^+$, writing $\sigma=y/b$.

Since $w_m'(y)=O(y)$ and $w_m(y)=a-\frac{a}{2}\Phi_m(\sigma)+O(\sigma^4)$,
\begin{align*}
a-t &= a-w_m(y)-\frac{y}{w_m'(y)}\Big(1-\sqrt{1+w_m'(y)^2}\Big)
= \frac{a}{2}\Big[\Phi_m(\sigma)-\frac{\sigma}{2}\Phi_m'(\sigma)\Big] + O(\sigma^4) \\
&= \frac{am}{4}\,\sigma^{m+2} + O(\sigma^{\min\{4,2m+2\}}),
\end{align*}
where we used that in $\Phi_m(\sigma)-\frac{\sigma}{2}\Phi_m'(\sigma)$ the quadratic terms cancel, which is of key importance.
On the other hand %
\begin{align*}
\rho&=-\frac{y}{w_m'(y)}\sqrt{1+w_m'(y)^2}
= -\frac{y}{w_m'(y)}-\frac{y\,w_m'(y)}{2}+O(y^{3})
= \frac{2b^2}{a}\,\frac{\sigma}{\Phi_m'(\sigma)}+O(\sigma^{2}) \\
&= \frac{b^2}{a}\Big(1+\frac{m+2}{2}\sigma^{m}\Big)+O\big(\sigma^{\min\{2,\,2m\}}\big),
\end{align*}
{so that $\rho \to b^2/a$, the radius of curvature at the vertex, and}
$\rho-\frac{b^2}{a} {= \frac{b^2(m+2)}{2a}\,\sigma^{m}+o(\sigma^m)} .$
Eliminating $\sigma$ between the two asymptotics we obtain $\rho-\frac{b^2}{a}\sim C\,(a-t)^{\frac{m}{m+2}}$, and
therefore
\begin{equation}\label{eq_H_perturbed}
\big(-H_{\Omega_m}(t,0)\big)^{-1} = \frac{b^2}{a}+C\,(a-t)^{\beta_m}+o\big((a-t)^{\beta_m}\big),
\qquad \beta_m=\frac{m}{m+2} \in \Big(0,\frac12\Big),
\end{equation}
as $t \to a^{-}$. As $m \to 0^+$ we have $\beta_m \to 0$, so that no H\"older exponent can be chosen uniformly
in the class of convex domains with $C^{1,1}$ boundary. We notice that formally $\Omega_m \to \Omega_0$ as $m \to 0$, but the found H\"older exponent of $\Omega_0$ (along the axis) is $1/2$, which highlights that the convergence of the domains is \emph{singular}. On the other hand,
$\beta_m \to 1/2$ as $m \to 2^-$, even if $\Omega_2$ is not the ellipse.
\begin{figure}[ht]
\centering
\begin{tikzpicture}[
 x=1.28cm,
 y=1.28cm,
 line cap=round,
 line join=round
]

\def\a{2}
\def\panel{2.32}
\def\ymin{-0.12}
\def\ymax{1.25}

\foreach \i/\m/\lab in {
 0/0.1/{0.1},
 1/0.5/{\frac12},
 2/1/{1},
 3/1.5/{\frac32},
 4/1.99/{1.99}
}{
 \begin{scope}[shift={({\i*\panel},0)}]

 \begin{scope}
 \clip (-0.04,\ymin) rectangle (2.08,\ymax);

 \draw[blue,thick]
 plot[
 domain=0:1,
 samples=70,
 variable=\s
 ]
 ({\a*sqrt(1-\s*\s)},{\s});

 \draw[red!80!black,thick]
 plot[
 domain=0:1,
 samples=80,
 variable=\s
 ]
 ({
 \a*sqrt(
 1-\s*\s
 +pow(\s,\m+2)
 -pow(\s,2*\m+2)
 )
 },{\s});

\draw[green!55!black,thin]
 plot[
 domain=0:1,
 samples=80,
 variable=\s
 ]
 ({
 \a-\s
 },{
 pow(\s,\m+2)-pow(\s,2*\m+2)
 });

\draw[orange!90!black,thin]
 plot[
 domain=0:1,
 samples=90,
 variable=\u
 ]
 ({
 \a-pow(\u,4)
 },{
 (\m+1)*(\m+2)*pow(\u,4*\m)
 -(2*\m+1)*(2*\m+2)*pow(\u,8*\m)
 });

 \end{scope}

 \draw[->,thin] (-0.04,0)--(2.10,0);
 \draw[->,thin] (0,\ymin)--(0,1.29);

 \fill (2,0) circle (0.45pt);

 \node[anchor=north] at (1,-0.16)
 {$m=\lab$};

 \end{scope}
}

%
%
%
%

\end{tikzpicture}

\caption{
Comparison between the ellipse $\Omega_0$ (in blue) and $\Omega_m$ (in red), together with the error $E_m(s)=s^2-\Phi_m(s)$ (in green) and $E_m''(s)$ (in orange), translated by $s \mapsto a-s$, so that $s=0$ corresponds to the right vertex.
}
\label{fig:Omega_m_excess}
\end{figure}
\end{itemize}
\end{example}
\noindent The following two examples, which we borrow from \cite{GoMa94}, illustrate Remark
\ref{rem_ifflinf}; note that they are not (all) convex {(see \cite{GoMa94} for the precise definition of $H_{\Omega}$ in this case)}, so they are recorded here only for completeness.

\begin{example}\label{ex_nonconvex} $\,$
\begin{itemize}[leftmargin=*]
\item[$(vi)$] Let $\alpha >0$ and let $B$ be the disk in $\R^2$ tangent to $|y|=x^{\alpha}$ at $(1, \pm 1)$,
and consider $\Omega = B \cup \{x \in (0,1), \, |y| 
{<} x^{\alpha}\}$. Then $H_{\Omega} \in L^{\infty}(\Omega)$
if and only if $\alpha \leq \frac{1}{2}$, that is, if and only if $\partial \Omega \in C^{1,1}$. Notice that
$\Omega$ is not convex for $\alpha > 1$.
\begin{figure}[H]
\centering
\begin{tikzpicture}[scale=0.6]
\foreach \a/\xshift/\lab [count=\i] in
 {0.25/0/{1/4},
 0.5/5/{1/2},
 0.75/10/{3/4},
 1.0/15/{1},
 1.25/20/{5/4}}
{
 \begin{scope}[shift={(\xshift,0)}]
 \pgfmathsetmacro{\cx}{1 + \a}
 \pgfmathsetmacro{\Rag}{sqrt(\a*\a + 1)}
 \pgfmathsetmacro{\ang}{atan2(1,\a)}
 \pgfmathsetmacro{\startAng}{180 - \ang}
 \pgfmathsetmacro{\endAng}{180 + \ang}

 \fill[green!20]
 (0,0) -- plot[domain=0:1, samples=50] (\x,{pow(\x,\a)})
 arc[start angle=\startAng, end angle=\endAng-360, radius=\Rag]
 -- plot[domain=1:0, samples=50] (\x,{-pow(\x,\a)})
 -- cycle;

 \draw[->,gray!30] (-0.2,0) -- ({\cx + \Rag + 0.5},0);
 \draw[->,gray!30] (0,-1.5) -- (0,1.5);

 \draw[thick, green!60!black]
 (0,0) -- plot[domain=0:1, samples=50] (\x,{pow(\x,\a)})
 arc[start angle=\startAng, end angle=\endAng-360, radius=\Rag]
 -- plot[domain=1:0, samples=50] (\x,{-pow(\x,\a)})
 -- cycle;

 \draw[dashed, gray!60, thin] (1,1) -- (1,-1);

 \fill (1,1) circle (1.5pt);
 \fill (1,-1) circle (1.5pt);

 \node[below] at (1,-2.2) {\small $\alpha = \lab$};
 \node[below] at (1,-2.7)
 {\tiny \ifnum\i<3 $H \in L^\infty$ \else $H \notin L^\infty$ \fi};
 \end{scope}
}
\end{tikzpicture}
\caption{{\small The domains in Example \ref{ex_nonconvex}$(vi)$ for several values of $\alpha$.}}
\label{fig_nonconvex_alpha}
\end{figure}
\item[$(vii)$] Let $\Omega= \{(x,y,z) \in B_1(0) \mid z < (x^2-y^2) (-\log(x^2+y^2)^{1/2})^{1/2}\}$. Then
$H_{\Omega} \in L^{\infty}(B_1)$ and $\partial \Omega \in C^{1,\alpha}$ for each $\alpha \in (0,1)$, but
$\partial \Omega \notin C^{1,1}$. Notice that $\Omega$ is not convex.
\begin{figure}[H]
\centering
\begin{tikzpicture}[scale=0.8]
\begin{scope}
\begin{axis}[
 width=7cm, height=6cm,
 axis lines=middle,
 xlabel={$x$}, ylabel={$z$},
 xmin=-1.2, xmax=1.2,
 ymin=-0.2, ymax=0.6,
 xtick={-1, 1},
 xticklabels={},
 yticklabels={},
 samples=200
]
 \addplot[draw=blue!60!black, thick, fill=blue!20, domain=-1:1, stack plots=y]
 {x^2 * sqrt(-ln(abs(x)+1e-9))} \closedcycle;
\end{axis}
\end{scope}

\begin{scope}[shift={(7,0)}]
\begin{axis}[
 width=7cm, height=6cm,
 axis lines=middle,
 axis on top,
 xlabel={$x$}, ylabel={$y$},
 xmin=-1.2, xmax=1.2,
 ymin=-1.2, ymax=1.2,
 unit vector ratio=1 1,
 xtick={-1, 1}, ytick={-1, 1},
 xticklabels={},
 yticklabels={},
]
 \draw[dashed, gray] (axis cs:0,0) circle [radius=1];
 \addplot[fill=orange!30, draw=none, domain=0:0.707] {x} \closedcycle;
 \addplot[fill=orange!30, draw=none, domain=0:0.707] {-x} \closedcycle;
 \addplot[fill=orange!30, draw=none, domain=0.707:1] {sqrt(1-x^2)} \closedcycle;
 \addplot[fill=orange!30, draw=none, domain=0.707:1] {-sqrt(1-x^2)} \closedcycle;
 \addplot[fill=orange!30, draw=none, domain=-0.707:0] {x} \closedcycle;
 \addplot[fill=orange!30, draw=none, domain=-0.707:0] {-x} \closedcycle;
 \addplot[fill=orange!30, draw=none, domain=-1:-0.707] {sqrt(1-x^2)} \closedcycle;
 \addplot[fill=orange!30, draw=none, domain=-1:-0.707] {-sqrt(1-x^2)} \closedcycle;
 \draw[thick, orange!80!black] (axis cs:-0.707,-0.707) -- (axis cs:0.707,0.707);
 \draw[thick, orange!80!black] (axis cs:-0.707,0.707) -- (axis cs:0.707,-0.707);
\end{axis}
\end{scope}

\end{tikzpicture}
\caption{{\small Cross sections of the domain in Example \ref{ex_nonconvex}$(vii)$ at $y=0$ (left) and $z=0$ (right).}}
\label{fig_nonconvex_3d}
\end{figure}
\end{itemize}
\end{example}

\begingroup
\renewcommand{\addcontentsline}[3]{}
\section*{AI declaration}
 The authors acknowledge the use of Claude Opus 5 and OpenAI GPT-5.6 Sol as AI tools %
 for detecting
typos and mistakes and for bibliographic verifications and cross-checks, as well as for generation of images. All mathematical
arguments and proofs in the final manuscript were checked and written by
the authors, who retain full responsibility for the whole content.
\endgroup


\begin{thebibliography}{99}

\bibitem{AlCa09} %
F. Alter, V. Caselles,
\emph{Uniqueness of the Cheeger set of a convex body},
Nonlinear Anal. \textbf{\href{https://doi.org/10.1016/j.na.2007.11.032}{70}} (2009), 32--44.

\bibitem{ACC05} %
F. Alter, V. Caselles, A. Chambolle,
\emph{A characterization of convex calibrable sets in $\mathbb{R}^N$},
Math. Ann. \textbf{\href{https://doi.org/10.1007/s00208-004-0628-9}{332}} (2005), 329--366.

\bibitem{ACC05b}
{F. Alter, V. Caselles, A. Chambolle, \emph{Evolution of characteristic functions of convex sets in the plane by the minimizing total variation flow}, Interfaces Free Bound. \textbf{\href{https://doi.org/10.4171/IFB/112}{7}} (2005), no.~1, 29--53.}

\bibitem{Alv77} %
A. Alvino,
\emph{Sulla diseguaglianza di Sobolev in spazi di Lorentz},
Boll. Un. Mat. Ital. A (5) \textbf{\href{https://www.iris.unina.it/handle/11588/148941}{14}} (1977), 148--156.

\bibitem{AFP00} %
L. Ambrosio, N. Fusco, D. Pallara,
\emph{Functions of bounded variation and free discontinuity problems},
Oxford Mathematical Monographs, ISBN \textbf{\href{https://doi.org/10.1093/oso/9780198502456.001.0001}{9780198502456}},
Oxford University Press, Oxford, 2000.

\bibitem{ACM04}
{F. Andreu-Vaillo, V. Caselles, J.$\,$M. Maz\'on, \emph{Parabolic quasilinear equations minimizing linear growth functionals}, Progress in Mathematics \textbf{\href{https://doi.org/10.1007/978-3-0348-7928-6}{223}}, Birkh\"auser, Basel, 2004.}

\bibitem{Anz83} %
{G. Anzellotti,
\emph{Pairings between measures and bounded functions and compensated compactness},
Ann. Mat. Pura Appl. (4) \textbf{\href{https://doi.org/10.1007/BF01781073}{135}} (1983), 293--318.}

\bibitem{Ban99}
{V. Bangert, \emph{Convex hypersurfaces with bounded first mean curvature measure}, Calc. Var. Partial Differential Equations \textbf{\href{https://doi.org/10.1007/s005260050126}{8}} (1999), 259--278.}

\bibitem{Bar94} %
E. Barozzi,
\emph{The curvature of a set with finite area},
Atti Accad. Naz. Lincei Rend. Lincei Mat. Appl. \textbf{\href{https://www.bdim.eu/item?id=RLIN_1994_9_5_2_149_0}{5}} (1994), no.~2, 149--159.

\bibitem{BGT87} %
{E. Barozzi, E.$\,$H.$\,$A. Gonzalez, I. Tamanini,
\emph{The mean curvature of a set of finite perimeter},
Proc. Amer. Math. Soc. \textbf{\href{https://doi.org/10.1090/S0002-9939-1987-0870791-8}{99}} (1987), no.~2, 313--316.}

\bibitem{BaMa13} %
{E. Barozzi, U. Massari,
\emph{On sets with variational mean curvature in $L^n(\mathbb{R}^n)$},
Ann. Univ. Ferrara Sez. VII Sci. Mat. \textbf{\href{https://doi.org/10.1007/s11565-013-0179-7}{59}} (2013), 259--268.}

\bibitem{BCN02} %
G. Bellettini, V. Caselles, M. Novaga,
\emph{The total variation flow in $\mathbb{R}^N$},
J. Differential Equations \textbf{\href{https://doi.org/10.1006/jdeq.2001.4150}{184}} (2002), no.~2, 475--525.

\bibitem{BeSh88} %
C. Bennett, R. Sharpley,
\emph{Interpolation of operators},
Pure and Applied Mathematics %
\textbf{\href{https://shop.elsevier.com/books/interpolation-of-operators/bennett/978-0-12-088730-9}{129}},
Academic Press, Boston, 1988.

\bibitem{BGP99}
S. Berhanu, F. Gladiali, G. Porru,
\emph{Qualitative properties of solutions to elliptic singular problems},
J. Inequal. Appl. \textbf{\href{https://doi.org/10.1155/S1025583499000223}{3}} (1999), 313--330.

\bibitem{BMS24} %
W. Borrelli, S. Mosconi, M. Squassina,
\emph{Concavity properties for solutions to $p$-Laplace equations with concave nonlinearities},
Adv. Calc. Var. \textbf{\href{https://doi.org/10.1515/acv-2021-0100}{17}} (2024), no.~1, 79--97.

\bibitem{BrLi76}
H.$\,$J. Brascamp, E.$\,$H. Lieb,
\emph{On extensions of the Brunn-Minkowski and Prékopa-Leindler theorems, including inequalities for log concave functions, and with an application to the diffusion equation},
J. Funct. Anal. \textbf{\href{https://doi.org/10.1016/0022-1236(76)90004-5}{22}} (1976), no.~4, 366--389.

\bibitem{Buc25} %
C. Bucur,
\emph{Solutions of the fractional $1$-Laplacian: existence, asymptotics and flatness results},
NoDEA Nonlinear Differential Equations Appl. \textbf{\href{https://doi.org/10.1007/s00030-025-01052-8}{32}} (2025), Art. No. 52, pp.~45.

\bibitem{BEM16} %
H. Bueno, G. Ercole, S.$\,$S. Macedo,
\emph{Asymptotic behavior of the $p$-torsion functions as $p$ goes to $1$},
Arch. Math. \textbf{\href{https://doi.org/10.1007/s00013-016-0922-2}{107}} (2016), 63--72.

\bibitem{BCC07}
{G. Buttazzo, G. Carlier, M. Comte, \emph{On the selection of maximal Cheeger sets}, Differential Integral Equations \textbf{\href{https://doi.org/10.57262/die/1356039307}{20}} (2007), no.~9, 991--1004.}

\bibitem{CaSp82}
{L.$\,$A. Caffarelli, J. Spruck, \emph{Convexity properties of solutions to some classical variational problems}, Comm. Partial Differential Equations \textbf{\href{https://doi.org/10.1080/03605308208820254}{7}} (1982), no.~11, 1337--1379.}

\bibitem{CST16} %
A. Canino, B. Sciunzi, A. Trombetta,
\emph{Existence and uniqueness for $p$-Laplace equations involving singular nonlinearities},
NoDEA Nonlinear Differential Equations Appl. \textbf{\href{https://doi.org/10.1007/s00030-016-0361-6}{23}} (2016), no.~2, Art. No. 8, pp.~18.

\bibitem{CaCo07}
{G. Carlier, M. Comte, \emph{On a weighted total variation minimization problem}, J. Funct. Anal. \textbf{\href{https://doi.org/10.1016/j.jfa.2007.05.022}{250}} (2007), no.~1, 214--226.}

\bibitem{CaSa23} %
M. Caroccia, G. Saracco,
\emph{Isoperimetric sets and $p$-Cheeger sets are in bijection},
J. Geom. Anal. \textbf{\href{https://doi.org/10.1007/s12220-022-01157-x}{33}} (2023), Art. No. 129, pp.~17.

\bibitem{CCN07}
{V. Caselles, A. Chambolle, M. Novaga, \emph{Uniqueness of the Cheeger set of a convex body}, Pacific J. Math. \textbf{\href{https://doi.org/10.2140/pjm.2007.232.77}{232}} (2007), no.~1, 77--90.}

\bibitem{CCN10} %
V. Caselles, A. Chambolle, M. Novaga,
\emph{Some remarks on uniqueness and regularity of Cheeger sets of convex bodies},
Rend. Sem. Mat. Univ. Padova \textbf{\href{https://doi.org/10.4171/RSMUP/123-9}{123}} (2010), 191--201.

\bibitem{ChFr99} %
{G.$\,$Q. Chen, H. Frid,
\emph{Divergence-measure fields and hyperbolic conservation laws},
Arch. Ration. Mech. Anal. \textbf{\href{https://doi.org/10.1007/s002050050146}{147}} (1999), no.~2, 89--118.}

\bibitem{CiTr03} %
M. Cicalese, C. Trombetti,
\emph{Asymptotic behaviour of solutions to $p$-Laplacian equation},
Asymptot. Anal. \textbf{\href{https://doi.org/10.3233/ASY-2003-564}{35}} (2003), no.~1, 27--40.

\bibitem{CRT77}
{M.$\,$G. Crandall, P.$\,$H. Rabinowitz, L. Tartar, \emph{On a Dirichlet problem with a singular nonlinearity}, Comm. Partial Differential Equations \textbf{\href{https://doi.org/10.1080/03605307708820029}{2}} (1977), 193--222.}

\bibitem{CrDC19} %
{G. Crasta, V. De Cicco,
\emph{Anzellotti's pairing theory and the Gauss--Green theorem},
Adv. Math. \textbf{\href{https://doi.org/10.1016/j.aim.2018.12.007}{343}} (2019), 935--970.}

\bibitem{DeC13} %
L.$\,$M. De Cave,
\emph{Nonlinear elliptic equations with singular nonlinearities},
Asymptot. Anal. \textbf{\href{https://doi.org/10.3233/ASY-131173}{84}} (2013), no.~3--4, 181--195.

\bibitem{DGOP19} %
V. De Cicco, D. Giachetti, F. Oliva, F. Petitta,
\emph{The Dirichlet problem for singular elliptic equations with general nonlinearities},
Calc. Var. Partial Differential Equations \textbf{\href{https://doi.org/10.1007/s00526-019-1582-4}{58}} (2019), Art. No. 129, pp.~40.

\bibitem{DGS19} %
V. De Cicco, D. Giachetti, S. Segura de León,
\emph{Elliptic problems involving the $1$-Laplacian and a singular lower order term},
J. Lond. Math. Soc. (2) \textbf{\href{https://doi.org/10.1112/jlms.12172}{99}} (2019), 349--376.

\bibitem{DuOl22} %
R. Durastanti, F. Oliva,
\emph{Comparison principle for elliptic equations with mixed singular nonlinearities},
Potential Anal. \textbf{\href{https://doi.org/10.1007/s11118-021-09906-3}{57}} (2022), 83--100.

\bibitem{Fir61}
W. J. Firey,
\emph{Polar means of convex bodies and a dual to
the Brunn--Minkowski theorem},
Canad. J. Math.
\textbf{\href{https://doi.org/10.4153/CJM-1961-037-0}{13}}
(1961), 444--453.

\bibitem{Fto25} %
I. Ftouhi,
\emph{The monotonicity of the Cheeger constant for parallel bodies},
J. Optim. Theory Appl. \textbf{\href{https://doi.org/10.1007/s10957-025-02727-z}{206}} (2025), Art. No. 44, pp.~26.

\bibitem{GKR68}
{D. Gale, V. Klee, R.$\,$T. Rockafellar, \emph{Convex functions on convex polytopes}, Proc. Amer. Math. Soc. \textbf{\href{https://doi.org/10.1090/S0002-9939-1968-0230219-6}{19}} (1968), 867--873.}

\bibitem{GaSq25} %
M. Gallo, M. Squassina,
\emph{Concavity and perturbed concavity for $p$-Laplace equations},
J. Differential Equations \textbf{\href{https://doi.org/10.1016/j.jde.2025.113452}{440}} (2025), Art. No. 113452, pp.~67.

\bibitem{GaZi77}
{R. Gariepy, W.$\,$P. Ziemer, \emph{A regularity condition at the boundary for solutions of quasilinear elliptic equations}, Arch. Ration. Mech. Anal. \textbf{\href{https://doi.org/10.1007/BF00280825}{67}} (1977), 25--39.}

\bibitem{GeHu23} %
E. Georgiou, R.$\,$R. Huilgol,
\emph{Cheeger sets and the minimum pressure gradient problem for viscoplastic
fluids},
J. Non-Newton. Fluid Mech. \textbf{\href{https://doi.org/10.1016/j.jnnfm.2023.104999}{313}} (2023), Art. No. 104999, pp.~7.

\bibitem{GOP22} %
{D. Giachetti, F. Oliva, F. Petitta,
\emph{$1$-Laplacian type problems with strongly singular nonlinearities and gradient terms},
Commun. Contemp. Math. \textbf{\href{https://doi.org/10.1142/S0219199721500814}{24}} (2022), no.~10, Art. No. 2150081, pp.~40.}

\bibitem{GST07}
{J. Giacomoni, I. Schindler, P. Tak\'a\v{c}, \emph{Sobolev versus H\"older local minimizers and existence of multiple solutions for a singular quasilinear equation}, Ann. Sc. Norm. Super. Pisa Cl. Sci. (5) \textbf{\href{https://doi.org/10.2422/2036-2145.2007.1.07}{6}} (2007), no.~1, 117--158.}

\bibitem{Gla02} %
F. Gladiali,
\emph{Proprietà qualitative e non esistenza di soluzioni positive per alcuni problemi ellittici non lineari},
PhD thesis, Università degli Studi di Roma ``La Sapienza'', \textbf{\href{http://www.bdim.eu/item?id=tesi_2002_GladialiFrancesca_1}{bdim.eu}}, 2002.

\bibitem{GoMa94} %
E.$\,$H.$\,$A. Gonzalez, U. Massari,
\emph{Variational mean curvatures},
Rend. Sem. Mat. Univ. Politec. Torino \textbf{\href{https://sfera.unife.it/handle/11392/462319}{52}} (1994), no.~1, 1--28.

\bibitem{GMT83} %
E.$\,$H.$\,$A. Gonzalez, U. Massari, I. Tamanini,
\emph{On the regularity of boundaries of sets minimizing perimeter with a volume constraint},
Indiana Univ. Math. J. \textbf{\href{https://doi.org/10.1512/iumj.1983.32.32003}{32}} (1983), 25--37.

\bibitem{HiRu97}
{T. Hildebrand, P. R\"uegsegger, \emph{A new method for the model-independent assessment of thickness in three-dimensional images}, J. Microsc. \textbf{\href{https://doi.org/10.1046/j.1365-2818.1997.1340694.x}{185}} (1997), 67--75.}

\bibitem{IoLR05}
{I.$\,$R. Ionescu, T. Lachand-Robert, \emph{Generalized Cheeger sets related to landslides}, Calc. Var. Partial Differential Equations \textbf{\href{https://doi.org/10.1007/s00526-004-0300-y}{23}} (2005), no.~2, 227--249.}

\bibitem{IST20}
{K. Ishige, P. Salani, A. Takatsu, \emph{To logconcavity and beyond}, Commun. Contemp. Math. \textbf{\href{https://doi.org/10.1142/S0219199719500093}{22}} (2020), no.~2, Art. No. 1950009, pp.~17.}

\bibitem{Kaw85}
{B. Kawohl, \emph{Rearrangements and convexity of level sets in PDE}, Lecture Notes in Math. \textbf{\href{https://doi.org/10.1007/BFb0075060}{1150}}, Springer, Berlin, 1985.}

\bibitem{Kaw90} %
B. Kawohl,
\emph{On a family of torsional creep problems},
J. Reine Angew. Math. \textbf{\href{https://doi.org/10.1515/crll.1990.410.1}{410}} (1990), 1--22.

\bibitem{KaFr03}
{B. Kawohl, V. Fridman, \emph{Isoperimetric estimates for the first eigenvalue of the $p$-Laplace operator and the Cheeger constant}, Comment. Math. Univ. Carolin. \textbf{\href{https://www.dml.cz/handle/10338.dmlcz/119420}{44}} (2003), no.~4, 659--667.}

\bibitem{KaLa06} %
B. Kawohl, T. Lachand-Robert,
\emph{Characterization of Cheeger sets for convex subsets of the plane},
Pacific J. Math. \textbf{\href{https://doi.org/10.2140/pjm.2006.225.103}{225}} (2006), 103--118.

\bibitem{Ken85}
A.$\,$U. Kennington,
\emph{Power concavity and boundary value problems},
Indiana Univ. Math. J. \textbf{\href{https://doi.org/10.1512/iumj.1985.34.34036}{34}} (1985), no.~3, 687--704.

\bibitem{Kor83}
N.$\,$J. Korevaar,
\emph{Convex solutions to nonlinear elliptic and parabolic boundary value problems},
Indiana Univ. Math. J. \textbf{\href{https://doi.org/10.1512/iumj.1983.32.32042}{32}} (1983), no.~4, 603--614.

\bibitem{LaUr68} %
O.$\,$A. Ladyzhenskaya, N.$\,$N. Ural'tseva,
\emph{Linear and quasilinear elliptic equations},
Mathematics in Science and Engineering \textbf{\href{https://shop.elsevier.com/books/linear-and-quasilinear-elliptic-equations/ladyzhenskaya/978-0-12-432850-1}{46}}, Academic Press, New York, 1968.

\bibitem{LOPdL21} %
M. Latorre, F. Oliva, F. Petitta, S. Segura de León,
\emph{The Dirichlet problem for the $1$-Laplacian with a general singular term and $L^1$-data},
Nonlinearity \textbf{\href{https://doi.org/10.1088/1361-6544/abc65b}{34}} (2021), no.~3, 1791--1816. %

\bibitem{LaMc91}
{A.$\,$C. Lazer, P.$\,$J. McKenna, \emph{On a singular nonlinear elliptic boundary-value problem}, Proc. Amer. Math. Soc. \textbf{\href{https://doi.org/10.1090/S0002-9939-1991-1037213-9}{111}} (1991), 721--730.}

\bibitem{Leo15}
{G.$\,$P. Leonardi, \emph{An overview on the Cheeger problem}, in: New trends in shape optimization, Progr. Nonlinear Differential Equations Appl. \textbf{\href{https://doi.org/10.1007/978-3-319-17563-8_6}{88}}, Birkh\"auser/Springer, Cham, 2015, 117--139.}

\bibitem{LNS17} %
G.$\,$P. Leonardi, R. Neumayer, G. Saracco,
\emph{The Cheeger constant of a Jordan domain without necks},
Calc. Var. Partial Differential Equations \textbf{\href{https://doi.org/10.1007/s00526-017-1263-0}{56}} (2017), Art. No. 164, pp.~29.

\bibitem{LePr16}
G.$\,$P. Leonardi, A. Pratelli,
\emph{On the Cheeger sets in strips and non-convex domains},
Calc. Var. Partial Differential Equations \textbf{\href{https://doi.org/10.1007/s00526-016-0953-3}{55}} (2016), Art. No. 15, pp.~28.

\bibitem{LeSa20} %
G.$\,$P. Leonardi, G. Saracco,
\emph{Minimizers of the prescribed curvature functional in a Jordan domain with no necks},
ESAIM Control Optim. Calc. Var. \textbf{\href{https://doi.org/10.1051/cocv/2020030}{26}} (2020), Art. No. 76, pp.~20.

\bibitem{Lut75}
E. Lutwak,
\emph{Dual cross-sectional measures},
Atti Accad. Naz. Lincei Rend. Cl. Sci. Fis.
Mat. Natur. (8)
\textbf{58} (1975), no.~1, 1--5.

\bibitem{Mag12} %
F. Maggi,
\emph{Sets of finite perimeter and geometric variational problems: an introduction to geometric measure
theory},
Cambridge Studies in Advanced Mathematics %
\textbf{\href{https://doi.org/10.1017/CBO9781139108133}{135}},
Cambridge University Press, Cambridge, 2012.

\bibitem{Mak71}
L.$\,$G. Makar-Limanov,
\emph{Solution of Dirichlet’s problem for the equation $\Delta u = - 1$ in a convex region},
Mat. Zametki \textbf{\href{https://www.mathnet.ru/eng/mzm9647}{9}} (1971), no.~1, 89--92.

\bibitem{MAOP25} %
A.$\,$J. Martínez Aparicio, F. Oliva, F. Petitta,
\emph{Optimal global $BV$ regularity for $1$-Laplace type BVP's with singular lower order terms},
{Commun. Contemp. Math. \textbf{\href{https://doi.org/10.1142/S0219199725500609}{28}} (2026), no.~6, Art. No. 2550060, pp.~18.}

\bibitem{Mas75} %
U. Massari,
\emph{Frontiere orientate di curvatura media assegnata in $L^p$},
Rend. Sem. Mat. Univ. Padova \textbf{\href{https://www.numdam.org/item/RSMUP_1975__53__37_0/}{53}} (1975), 37--52.

\bibitem{MdLT08} %
A. Mercaldo, S. Segura de León, C. Trombetti,
\emph{On the behaviour of the solutions to $p$-Laplacian equations as $p$ goes to $1$},
Publ. Mat. \textbf{\href{https://doi.org/10.5565/PUBLMAT_52208_07}{52}} (2008), no.~2, 377--411.

\bibitem{MdLT09} %
A. Mercaldo, S. Segura de León, C. Trombetti,
\emph{On the solutions to $1$-Laplacian equation with $L^1$ data},
J. Funct. Anal. \textbf{\href{https://doi.org/10.1016/j.jfa.2008.12.025}{256}} (2009), no.~8, 2387--2416.

\bibitem{MoPe15} %
S. Moll, F. Petitta,
\emph{Large solutions for the elliptic $1$-Laplacian with absorption},
J. Anal. Math. \textbf{\href{https://doi.org/10.1007/s11854-015-0004-6}{125}} (2015), no.~1, 113--138.

\bibitem{OlPe25} %
F. Oliva, F. Petitta,
\emph{Singular elliptic PDEs: an extensive overview},
Partial Differ. Equ. Appl. \textbf{\href{https://doi.org/10.1007/s42985-024-00308-9}{6}} (2025), Art. No. 6, pp.~82.

\bibitem{OrPo17} %
L. Orsina, A.$\,$C. Ponce,
\emph{Flat solutions of the $1$-Laplacian equation},
Rend. Istit. Mat. Univ. Trieste \textbf{\href{https://doi.org/10.13137/2464-8728/16204}{49}} (2017), 41--51.

\bibitem{Par11}
{E. Parini, \emph{An introduction to the Cheeger problem}, Surv. Math. Appl. \textbf{\href{https://www.utgjiu.ro/math/sma/v06/a02.html}{6}} (2011), 9--21.}

\bibitem{Rev16}
S.$\,$G. R\'ev\'esz,
\emph{A discrete extension of the Blaschke rolling ball theorem},
Geom. Dedicata \textbf{\href{https://doi.org/10.1007/s10711-015-0127-z}{182}} (2016), 51--72.

\bibitem{Roc97} %
R.$\,$T. Rockafellar,
\emph{Convex analysis},
Princeton Mathematical Series \textbf{\href{https://ingramacademic.com/products/convex-analysis-9780691015866}{28}}, Princeton University Press, Princeton, NJ, 1997.

\bibitem{Sak87} %
S. Sakaguchi,
\emph{Concavity properties of solutions to some degenerate quasilinear elliptic Dirichlet problems},
Ann. Scuola Norm. Sup. Pisa Cl. Sci. (4) \textbf{\href{https://numdam.org/item/ASNSP_1987_4_14_3_403_0/}{14}} (1987), no.~3, 403--421.

\bibitem{Sar18}
{G. Saracco, \emph{Weighted Cheeger sets are domains of isoperimetry}, Manuscripta Math. \textbf{\href{https://doi.org/10.1007/s00229-017-0974-z}{156}} (2018), no.~3--4, 371--381.}

\bibitem{ScSc23} %
T. Schmidt, J.$\,$H. Sch\"utt,
\emph{The optimal H\"older exponent in Massari's regularity theorem},
Calc. Var. Partial Differential Equations \textbf{\href{https://doi.org/10.1007/s00526-023-02495-6}{62}} (2023), Art. No. 161, pp.~23.

\bibitem{ScSc26}
T. Schmidt, J.$\,$H. Sch\"utt,
\emph{Variational mean curvatures for general $C^{1,\alpha}$ domains},
\href{https://doi.org/10.48550/arXiv.2610.24224}{arXiv:2610.24224} (2026).

\bibitem{Sch14}
{R. Schneider,
\emph{Convex bodies: the Brunn--Minkowski theory},
Encyclopedia of Mathematics and its Applications
\textbf{\href{https://doi.org/10.1017/CBO9781139003858}{151}},
Cambridge University Press, Cambridge, 2014.}

\bibitem{Ser82}
{J. Serra, \emph{Image analysis and mathematical morphology}, vol.~\textbf{\href{https://openlibrary.org/books/OL21984703M/Image_analysis_and_mathematical_morphology}{1}}, Academic Press, London, 1982.}

\bibitem{StZi97}
{E. Stredulinsky, W.$\,$P. Ziemer, \emph{Area minimizing sets subject to a volume constraint in a convex set}, J. Geom. Anal. \textbf{\href{https://doi.org/10.1007/BF02921639}{7}} (1997), no.~4, 653--677.}

\bibitem{Tru67} %
N.$\,$S. Trudinger,
\emph{On Harnack type inequalities and their application to quasilinear elliptic equations},
Comm. Pure Appl. Math. \textbf{\href{https://doi.org/10.1002/cpa.3160200406}{20}} (1967), no.~4, 721--747.



\end{thebibliography}
\end{document}